\def \mmod{/\mkern-3mu /}
\newcommand{\mc}[1]{\mathcal{#1}}
\newcommand{\mr}[1]{\mathrm{#1}}
\newcommand{\mit}[1]{\mathit{#1}}
\newcommand{\abs}[1]{\lvert #1 \rvert}
\newcommand{\bra}[1]{\langle #1 \rangle}
\newcommand{\br}[1]{\overline{#1}}
\newcommand{\ul}[1]{\underline{#1}}
\newcommand{\td}[1]{\widetilde{#1}}
\newcommand{\ZZ}{\mathbb{Z}}
\newcommand{\CC}{\mathbb{C}}
\newcommand{\QQ}{\mathbb{Q}}
\newcommand{\FF}{\mathbb{F}}
\newcommand{\TMF}{\mathrm{TMF}}
\newcommand{\Tmf}{\mathrm{Tmf}}
\newcommand{\tmf}{\mathrm{tmf}}
\newcommand{\bo}{\mathrm{bo}}
\newcommand{\bou}{\underline{\bo}}
\newcommand{\xib}{\bar{\xi}}
\theoremstyle{definition}
 \newtheorem{thm}[equation]{Theorem}
 \newtheorem{cor}[equation]{Corollary}
 \newtheorem{lem}[equation]{Lemma}
 \newtheorem{prop}[equation]{Proposition}
 \newtheorem{ex}[equation]{Example}
 \newtheorem{rmk}[equation]{Remark}
\newtheorem*{thm*}{Theorem}
\newtheorem*{cor*}{Corollary}
\newtheorem*{lem*}{Lemma}
\newtheorem*{prop*}{Proposition}
\newtheorem*{defn*}{Definition}
\newtheorem*{ex*}{Example}
\newtheorem*{exs*}{Examples}
\newtheorem*{rmk*}{Remark}
\newtheorem*{claim*}{Claim}
\numberwithin{equation}{section}
\numberwithin{figure}{section}
\DeclareMathOperator{\Ext}{Ext}
\DeclareMathOperator{\coker}{coker}
\DeclareMathOperator{\im}{im}
\DeclareMathOperator*{\hocolim}{hocolim}
\DeclareMathOperator*{\colim}{colim}
\title{Detecting exotic spheres in low dimensions using coker J}
\author[M.~Behrens]{M.~Behrens$\sp 1$}
\address{
Dept. of Mathematics \\
University of Notre Dame \\
Notre Dame, IN, U.S.A.
}
\author[M.~Hill]{M.~Hill$\sp 2$}
\address{
Dept. of Mathematics \\
UCLA \\
Los Angeles, CA, U.S.A.
}
\author[M.J.~Hopkins]{M.J.~Hopkins$\sp 3$}
\address{
Dept. of Mathematics \\
Harvard University \\
Cambridge, MA, U.S.A.
}
\author{M.~Mahowald}
\address{
Dept. of Mathematics \\
Northwestern University \\
Evanston, IL, U.S.A.
}
\subjclass[2010]{Primary 57R60, 57R55, 55Q45; Secondary 55Q51, 55N34, 55T15 }
\begin{document}

\begin{abstract}
Building off of the work of Kervaire-Milnor and Hill-Hopkins-Ravenel, Wang and Xu showed that the only odd dimensions $n$ for which $S^n$ has a unique differentiable structure are 1, 3, 5, and 61.  We show that the only even dimensions below 140 for which $S^n$ has a unique differentiable structure are 2, 6, 12, 56, and perhaps 4.
\end{abstract}

\footnotetext[1]{The author was supported by the NSF under grants DMS-1050466, DMS-1452111, and DMS-1611786.}
\footnotetext[2]{The author was supported by the NSF under grants DMS-1207774, DMS-1509652, and DMS-1811189.}
\footnotetext[3]{The author was supported by the NSF under grants DMS-1510417 and DMS-1810917.}

\maketitle

\tableofcontents

\section{Introduction}

A homotopy $n$-sphere is a smooth $n$-manifold which is homotopy equivalent to $S^n$.
Kervaire and Milnor defined $\Theta_n$ to be the group of homotopy spheres up to $h$-cobordism (where the group operation is given by connect sum).  By the $h$-cobordism theorem \cite{Smale} ($n > 4$) and Perelman's proof of the Poincare conjecture \cite{Perelman1}, \cite{Perelman3}, \cite{Perelman2} ($n = 3$), for $n \ne 4$, $\Theta_n = 0$ if and only if $S^n$ has a unique differentiable structure (i.e. there are no exotic spheres of dimension $n$).  

We wish to consider the following question:
$$
\text{\it For which $n$ is $\Theta_n = 0$?}
$$
The general belief is that there should be finitely many such $n$, and these $n$ should be concentrated in relatively low dimensions.

Kervaire and Milnor \cite{KervaireMilnor}, \cite{Milnor} showed that there is an isomorphism 
$$ \Theta_{4k} \cong \coker J_{4k} $$
and there are exact sequences
\begin{gather*}
0 \rightarrow \Theta_{2k+1}^{bp} \rightarrow \Theta_{2k+1} \rightarrow \coker J_{2k+1} \rightarrow 0, \\
0 \rightarrow \Theta_{4k+2} \rightarrow \coker J_{4k+2} \xrightarrow{\Phi_K} \ZZ/2 \rightarrow \Theta^{bp}_{4k+1} \rightarrow 0. \\
\end{gather*}
Here $\Theta_n^{bp}$ is the subgroup of those homotopy spheres which bound a parallelizable manifold, $\coker J$ is the cokernel of the $J$ homomorphism
$$ J: \pi_{n}SO \rightarrow \pi_n^s, $$
(where $\pi_*^s$ denotes the stable homotopy groups of spheres, a.k.a. the stable stems) and $\Phi_K$ is the Kervaire invariant.

Kervaire and Milnor showed that $\Theta_{4k+3}^{bp}$ is non-trivial for $k \ge 1$.  The non-triviality of $\Theta_{4k+1}^{bp}$ depends on the non-triviality of the Kervaire homomorphism $\Phi_K$.  The following theorem of Browder is essential \cite{Browder}.

\begin{thm}[Browder]\label{thm:Browder}
$x \in \pi_*^s$ has $\Phi_K(x) = 1$ if and only if it is detected in the Adams spectral sequence by the class $h_j^2$.  In particular, Kervaire invariant one elements can only occur in dimensions of the form $2^i-2$.
\end{thm}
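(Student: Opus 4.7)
The plan is to prove Browder's theorem by realizing $\Phi_K$ as a secondary cohomology operation on framed manifolds and then invoking Adams's dictionary between secondary operations and $\Ext^{2,*}$ classes detected in the Adams spectral sequence.

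First, using the Pontryagin--Thom correspondence, I would identify $\pi_{4k+2}^s$ with the framed cobordism group $\Omega^{\mathrm{fr}}_{4k+2}$ and realize $\Phi_K([M])$ as the Arf invariant of the quadratic refinement $q_M \colon H^{2k+1}(M;\FF_2) \to \FF_2$ of the intersection form. The crucial step is then to express $q_M$ cohomologically: the cup square of a middle class $x$ equals $Sq^{2k+1}(x)$, and the framing of $M$ produces a canonical null-homotopy of $Sq^{2k+1}$ on the relevant cell, thereby defining a stable \emph{secondary} operation $\Phi$ whose value on $x$ computes $q_M(x)$. When $2k+1 = 2^j$, the operation $\Phi$ is precisely the one associated to the Adem relation expressing $Sq^{2^j}Sq^{2^j}$ as a sum of admissible monomials $Sq^{a}Sq^{b}$ with $b < 2^j$.

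Second, I would invoke Adams's theorem that a stable secondary cohomology operation arising from a relation $\sum a_i b_i = 0$ in the Steenrod algebra $\mathcal{A}$ corresponds, under the Adams edge homomorphism, to a specific class in $\Ext^{2,*}_{\mathcal{A}}(\FF_2,\FF_2)$, and that the operation $\Phi$ just described corresponds to $h_j^2 \in \Ext^{2,2^{j+1}}$. Consequently, any $x \in \pi_{4k+2}^s$ of Adams filtration $\geq 3$ has $\Phi_K(x) = 0$, so a nonzero Kervaire invariant forces detection in Adams filtration $\leq 2$. Filtration-one classes $h_j$ live in odd stems and are excluded, while among filtration-two products $h_ih_k$ the stem $2^i + 2^k - 2$ equals $2^{j+1} - 2$ only when $i = k = j$ (since $2^i + 2^k$ is a power of two only on the diagonal). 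This forces the dimensional constraint $4k+2 = 2^{j+1}-2$, so the dimension has the form $n = 2^i - 2$. The converse direction is then established by exhibiting explicit framed representatives of $h_j^2$---for instance $S^{2^j-1} \times S^{2^j-1}$ with its Lie framing in low $j$---whose Kervaire invariant is directly computed to be $1$.

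The main obstacle is the identification of $q_M$ with the specific secondary operation $\Phi$: this is the geometric heart of the argument, converting a manifold-theoretic invariant into a purely homotopy-theoretic one, and it requires careful tracking of how the framing of $M$ induces the nullhomotopy used to define $\Phi$. This identification constitutes the main content of Browder's original paper; once it is in hand, the remainder of the theorem is a formal consequence of the structure of the Adams $E_2$-page in stems $2^i - 2$.
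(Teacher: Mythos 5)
The paper does not prove this statement at all --- it is quoted as Theorem~\ref{thm:Browder} with a citation to \cite{Browder} --- so the only fair comparison is with Browder's actual argument, and your proposal is essentially a summary of that argument's architecture rather than a proof. You correctly set up the Pontryagin--Thom translation, the Arf invariant of the quadratic refinement, the Adams correspondence between $\Ext^{2,*}_{A_*}$ and secondary operations, and the bookkeeping that the only nonzero product $h_ih_k$ in the stem $2^{j+1}-2$ is $h_j^2$. But the step you flag as ``the main obstacle'' --- that the framing-induced refinement $q_M$ is computed by the specific secondary (in Browder's treatment, functional) operation attached to the relation on $Sq^{2^j}Sq^{2^j}$, and that consequently $\Phi_K$ vanishes on classes of Adams filtration $\ge 3$ and is given by the $h_j^2$-functional on filtration $2$ --- is the entire content of the theorem, and your write-up assumes it. A proposal that defers exactly this identification to ``Browder's original paper'' has not proved the statement; everything you do prove is the easy formal part.

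Two further points are genuinely wrong or incomplete as written. First, your dimension-restriction argument does not close: knowing only that $\Phi_K(x)=1$ forces Adams filtration $\le 2$ leaves open stems $4k+2$ containing off-diagonal classes $h_ih_k$ with $i\ne k$, $k\ne i+1$ (e.g.\ $h_2h_4$ in stem $18$), so you cannot conclude $4k+2=2^{j+1}-2$ from the $\Ext^2$ census alone. Browder excludes these dimensions by a separate argument resting on the decomposability of $Sq^{2k+2}$ when $2k+2$ is not a power of $2$, which forces the quadratic form to vanish on a half-dimensional subspace and hence kills the Arf invariant; that input is missing from your sketch. Second, the converse direction (\emph{every} $x$ detected by $h_j^2$ has $\Phi_K(x)=1$) cannot be established by exhibiting $S^{2^j-1}\times S^{2^j-1}$ with a Lie-type framing: such explicit representatives exist only for $j\le 3$ (the classes $\eta^2,\nu^2,\sigma^2$), whereas the theorem is needed for all $j$, notably $j=5,6$ in this paper. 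The converse must instead fall out of the same operation-theoretic identification together with the vanishing of $\Phi_K$ in filtration $\ge 3$ (so that any two classes detected by $h_j^2$ have the same Kervaire invariant); as structured, your argument does not deliver it.
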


The work of Hill-Hopkins-Ravenel \cite{HHR} implies that $\Phi_K$ is non-trivial only in dimensions 2, 6, 14, 30, 62, and perhaps 126.  This implies that $\Theta^{bp}_{4k+1}$ is non-trivial, except in dimensions 1, 5, 13, 29, 61, and perhaps 125.  In dimensions 13 and 29, well established computations of $\pi_*^s$ show $\coker J$ is non-trivial.  With extraordinary effort, Wang and Xu recently showed $\pi^s_{61} = 0$ \cite{WangXu}.  They also observe that $\coker J_{125} \ne 0$ by producing an explicit element whose non-triviality is detected by $\tmf$, the spectrum of topological modular forms.  This concludes the analysis of odd dimensions.

The case of even dimensions, by the Kervaire-Milnor exact sequence, boils down to the question: 
$$
\text{for which $k$ does $\coker J_{2k}$ have a non-trivial element of Kervaire invariant $0$?}
$$
\emph{The purpose of this paper is to examine the extent to which modern techniques in homotopy theory can address this question.}

Over the years we have amassed a fairly detailed knowledge of the stable stems in low degrees using the Adams and Adams-Novikov spectral sequences.  We refer the reader to \cite{Ravenel} for a good summary of the state of knowledge at odd primes, and \cite{Isaksen}, \cite{Isaksenchart} for a detailed account of the current state of affairs at the prime $2$.  In particular, we have a complete understanding of $\pi_n^s$ in a range extending somewhat beyond $n = 60$.\footnote{Using a recent breakthrough of Gheorghe-Wang-Xu \cite{GheorgheWangXu}, Isaksen-Wang-Xu are now using motivic homotopy theory and machine computations to bring the range beyond $n = 90$.}

However, we do not need to completely compute $\pi_n^s$ to simply deduce $\coker J_n$ has a non-zero element of Kervaire invariant $0$ --- it suffices to produce a single non-trivial element.  For example, Adams \cite[Theorem~1.2]{Adams} produced families of elements in $\coker J_{n}$, for $n \equiv 1,2 \pmod 8$ (see Section~\ref{sec:J}) whose non-triviality is established by observing that they have non-trivial image under the Hurewicz homomorphism for real K-theory ($KO$) 
$$ \pi_n^s \rightarrow \pi_*KO. $$

Adams' families constitute examples of \emph{$v_1$-periodic families}.  
Quillen \cite{Quillen} showed that the $E_2$-term of the Adams-Novikov spectral sequence
$$ \Ext_{MU_*MU}(MU_*, MU_*) \Rightarrow \pi_*^s $$
(the Adams spectral sequence based on complex cobordism $MU$) can be described in terms of the moduli space of formal group laws. 
Motivated by ideas of Morava, Miller-Ravenel-Wilson \cite{MRW} lifted the stratification of the $p$-local moduli of formal groups by height to show that for a prime $p$, the $p$-localization of this $E_2$-term admits a filtration (called the \emph{chromatic filtration}) where the $n$th layer consists of periodic families of elements (these are called $v_n$-periodic families).
By proving a series of conjectures of Ravenel \cite{Ravenel84}, Devinatz, Hopkins, and Smith \cite{Nilp1}, \cite{Nilp2} showed that this chromatic filtration lifts to a filtration of the stable stems $(\pi_*^s)_{(p)}$. Thus the stable stems decompose into chromatic layers, and the $n$th layer is generated by $v_n$-periodic families of elements.

Unfortunately (see \cite[Ch.~5.3]{Ravenel}), the Adams families described above constitute the only $v_1$-periodic elements which are not in the image of the $J$-homomorphism (and every element in the image of the $J$ homomorphism is $v_1$-periodic).
Therefore, $\coker J$ is generated by the Adams families, and the $v_n$-periodic families with $n \ge 2$.

In this paper we will produce such non-trivial elements of $\coker J$ in low degrees using two techniques:
\begin{enumerate}
\item Take a product or Toda bracket of known elements in $\pi_*^s$ to produce a new element in $\pi_*^s$, and show the resulting element is non-trivial in $\coker J$.

\item Use chromatic homotopy theory to produce non-trivial $v_2$-periodic families.
\end{enumerate}
Just as the Adams families are detected by the $KO$ Hurewicz homomorphism, many $v_2$-periodic families are detected by the theory of topological modular forms (tmf).

The main theorem is the following.

\begin{thm}\label{thm:main}
For every even $k < 140$, $\coker J_{k}$ has a non-trivial element of Kervaire invariant $0$, except for $k = $ 2, 4, 6, 12, and 56.
\end{thm}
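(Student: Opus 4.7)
The plan is to handle the roughly seventy even dimensions below $140$ by stratifying them according to which tool is most effective in each range. By Browder's theorem (Theorem~\ref{thm:Browder}), Kervaire invariant one elements can occur only in dimensions $k \in \{2,6,14,30,62,126\}$, so outside this small list the problem reduces to producing any nontrivial element of $\coker J_k$. For the five exceptions in the theorem, the absence of a Kervaire invariant zero element must be verified directly: $\pi_4^s = \pi_{12}^s = 0$ (and dimension $4$ is vacuous on the manifold side, modulo the smooth Poincar\'e conjecture); $\coker J_{56} = 0$ by the classical computation of the stable stems in that range; while in dimensions $2$ and $6$ the cokernel is generated respectively by $\eta^2$ and $\nu^2$, both Kervaire invariant one.

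For the positive direction in dimensions below roughly $90$, the strategy is to read off witness classes from the tabulation of the stable stems via the Adams spectral sequence (notably Isaksen's chart) and the recent motivic extension past $n = 90$ due to Gheorghe--Wang--Xu and Isaksen--Wang--Xu. In each such dimension outside the exceptions, the chart already exhibits an element of $\pi_k^s$ outside the image of $J$; its Kervaire invariant vanishes automatically unless $k \in \{14,30,62\}$, in which case one simply selects a Kervaire invariant zero generator explicitly (e.g.\ $\kappa \in \pi_{14}^s$, detected by $d_0$ in the Adams spectral sequence rather than by $h_3^2$).

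For the remaining dimensions $90 \le k < 140$ the plan is to construct witnesses by hand using the two techniques announced in the introduction. The first, most direct approach is to form products and Toda brackets of familiar low-degree classes $\eta$, $\nu$, $\sigma$, $\epsilon$, $\kappa$, $\bar\kappa$, together with the Adams $v_1$-periodic families in dimensions $\equiv 1,2 \pmod 8$, and verify nontriviality by pushing forward along the Hurewicz homomorphism to $\tmf$ (or, where applicable, to $KO$ using factorizations different from those landing in the image of $J$). The second approach exploits the $v_2$-periodicity carried by $\bar\kappa$ on the $\tmf$-side: a suitable power of the image of $\bar\kappa$ in $\pi_*\tmf$ is a $v_2$-periodicity generator, so multiplying a $\tmf$-detected low-degree class by powers of $\bar\kappa$ produces chromatically $v_2$-periodic families in $\coker J_*$; such families are automatically outside the image of $J$ (which is $v_1$-periodic) and automatically of Kervaire invariant zero.

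The main obstacle will be closing the last few gaps near the top of the range. Not every even residue class modulo the $\tmf$-periodicity is immediately hit by an obvious product of a known low-degree class with a power of $\bar\kappa$, so a handful of individual dimensions will not admit an off-the-shelf $\tmf$-detected witness. For each such gap the plan is to engineer a more subtle Toda bracket, or a multiplicative extension shifted by $\eta$, $\nu$, or $\epsilon$ to reach the required parity, or else to appeal to $v_2$-periodic classes that are invisible to $\tmf$ but still detectable by further chromatic means. The substantial bookkeeping burden, to be carried out for each proposed witness, is to verify (a) permanent-cycle status in the relevant spectral sequence, (b) Kervaire invariant zero, and (c) nontriviality modulo the image of $J$; the chromatic filtration handles (b) and (c) generically for the $v_2$-periodic families, while (a) generally follows from vanishing in the relevant region of the Adams or Adams--Novikov $E_2$-page.
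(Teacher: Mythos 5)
Your outline breaks down at the point where the real work of the paper happens: the mechanism you propose for covering dimensions beyond roughly $100$ does not exist. You assert that a power of $\bar\kappa$ in $\pi_*\tmf$ serves as a $v_2$-periodicity generator, so that multiplying $\tmf$-detected low-degree classes by powers of $\bar\kappa$ yields $v_2$-periodic families in $\coker J$. But $\bar\kappa$ is nilpotent: already $\bar\kappa^6 = 0$ in $\pi_*\tmf$ (this is exactly why the paper must detect $\bar\kappa^6 \in \pi_{120}^s$ not in $\tmf$ but in the fiber $F(3)$ of $q_3^* - f_3^*\colon \TMF \to \TMF_0(3)$). The actual $2$-local $v_2$-periodicity of $\tmf$ is $\Delta^8 = v_2^{16}$, and $\Delta^8$ is not in the image of the Hurewicz map, so the classes filling dimensions $102, 104, 110, 116, 118$ (namely $v_2^{16}\nu^2$, $v_2^{16}\epsilon$, $v_2^{16}\kappa$ and their products) cannot be obtained as products or brackets of elements of $\pi_*^s$ detected by $\tmf$; they must be \emph{lifted} from $\pi_*\tmf$ to $\pi_*^s$. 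That lifting is the technical core of the paper: one produces the classes in $\tmf_*M(8,v_1^8)$, shows via the algebraic $\tmf$-resolution that the detecting classes in $\Ext_{A(2)_*}(H(8,v_1^8))$ survive to $\Ext_{A_*}(H(8,v_1^8))$, shows they are permanent cycles in the modified Adams spectral sequence for $M(8,v_1^8)$, and only then projects to the top cell. Your item (a) (``permanent-cycle status \ldots generally follows from vanishing in the relevant region'') waves at this, but the needed vanishing is precisely what Sections~\ref{sec:M38}--\ref{sec:MASS} establish through the $\bo$-Brown--Gitler decomposition of the $E_1$-page and the construction of $v_2^8 \in \Ext_{A_*}(H(8,v_1^8))$; it is not automatic, and the paper notes that dimension $118$ in particular has no known product or bracket construction at all.

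Two further concrete gaps: dimension $126$ cannot be dispatched by ``selecting a Kervaire invariant zero generator,'' since the existence of $\theta_6$ is open and one must argue as Isaksen--Xu do (either $\theta_5\eta_6 \ne 0$, or else $\langle \theta_5, 2, \{X_2+C'\}\rangle$ is defined, detected by $h_6(X_2+C')$, and cannot be killed); your proposal gives no argument here. And many of the even dimensions between $60$ and $140$ (e.g.\ $68, 72, 76, 84, 86, 92, 94, 112, 124, 134$) are handled in the paper not by $2$-primary $\tmf$ methods or chart-reading (the $2$-primary stems are not fully known there) but by $3$- and $5$-primary Greek-letter elements $\beta_{k/j}$ from \cite{MRW}, \cite{Ravenel}; your plan never invokes odd-primary information, and without it the ``bookkeeping'' cannot close in those dimensions.
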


Combining this with the discussion at the beginning of this section yields the following corollary.

\begin{cor}
The only dimensions less than 140 for which $S^n$ has a unique differentiable structure are 1,2,3,5, 6, 12, 56, 61, and perhaps 4.
\end{cor}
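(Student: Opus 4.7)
The plan is to combine Theorem~\ref{thm:main}, the two Kervaire--Milnor exact sequences recalled in the introduction, and the odd-dimensional analysis already summarized there. I would organize the argument by parity of $n$, treating $n \le 3$ and $n = 4$ separately.

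First I would dispose of $n \le 3$, where $\Theta_n = 0$ classically (with $n = 3$ via Perelman), and $n = 4$, where the $h$-cobordism theorem is unavailable so the question coincides with the still-open smooth $4$-dimensional Poincar\'e conjecture; this is the source of the ``perhaps $4$'' clause in the statement.

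For odd $n$ with $5 \le n < 140$, I would invoke the short exact sequence to reduce triviality of $\Theta_n$ to the simultaneous vanishing of $\Theta^{bp}_n$ and $\coker J_n$. Kervaire--Milnor give $\Theta^{bp}_{4k+3} \ne 0$ for $k \ge 1$, killing all $n \equiv 3 \pmod 4$ with $n \ge 7$. For $n = 4k+1$, $\Theta^{bp}_n = 0$ precisely when $\Phi_K$ is surjective in dimension $n+1$; by Browder's theorem and the Hill--Hopkins--Ravenel solution of the Kervaire invariant problem this forces $n \in \{1,5,13,29,61\}$, or possibly $125$. Standard computations of $\pi_*^s$ kill dimensions $13$ and $29$ by producing nontrivial classes in $\coker J$, Wang--Xu's explicit $\tmf$-detected element kills dimension $125$, and their computation $\pi^s_{61} = 0$ retains dimension $61$. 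Thus the odd $n < 140$ with $\Theta_n = 0$ are exactly $1, 3, 5, 61$.

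For even $n < 140$ I would use $\Theta_{4k} \cong \coker J_{4k}$ together with $\Theta_{4k+2} = \ker(\Phi_K)$ to rewrite ``$\Theta_n = 0$'' as ``$\coker J_n$ contains no nontrivial class of Kervaire invariant zero''; in dimensions $\equiv 0 \pmod 4$ the Kervaire invariant is not defined, so this simply demands $\coker J_n = 0$. Theorem~\ref{thm:main} then pins the candidates down to $n \in \{2, 4, 6, 12, 56\}$, and it remains to verify triviality in each. The cases $n = 2, 6, 12$ are classical low-dimensional computations of Kervaire--Milnor; the case $n = 56$ follows because the theorem forces $\coker J_{56} = 0$; and $n = 4$ is again the smooth Poincar\'e case. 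Assembling the two parity halves produces the listed dimensions $1, 2, 3, 5, 6, 12, 56, 61$, with $4$ contingent on smooth Poincar\'e. The only real obstacle is Theorem~\ref{thm:main} itself; granted that, together with the inputs already cited, the corollary is pure assembly.
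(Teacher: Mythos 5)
Your assembly is correct and is essentially the paper's own argument: the corollary is obtained exactly by combining Theorem~\ref{thm:main} with the Kervaire--Milnor sequences, Browder's theorem, Hill--Hopkins--Ravenel, and Wang--Xu as recalled in the introduction, with dimension $4$ left open because the $h$-cobordism theorem fails there. The only nitpick is phrasing: in dimensions $\equiv 0 \pmod 4$ the Kervaire invariant is not ``undefined'' but automatically zero (it can be nonzero only in dimensions $2^j-2$), so $\Theta_{4k}=0$ indeed reduces to $\coker J_{4k}=0$ as you use it.
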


Theorem~\ref{thm:main} will be established in part by using the complete computation of $(\pi_*^s)_{(2)}$ for $* < 60$, $(\pi_*^s)_{(3)}$ for $* < 104$, and a small part of the vast knowledge of $(\pi_*^s)_{(5)}$ (computed in \cite{Ravenel} for $* < 1000$), though the $5$ torsion is very sparse, and contributes very little to the discussion.  Contributions from primes greater than $5$ offer nothing to this range ($(\coker J_{82})_{(7)}$ is non-trivial, but we handle this dimension through other means). 

Additional work will be required to produce non-trivial classes in $(\coker J_*)_{(2)}$ for $* \ge 60$.  
Regarding technique (1) above, when taking a product of known elements, we must establish the resulting product is non-trivial.  This will be done in some cases by observing these products are detected by classes in the Adams spectral sequence which cannot be targets of differentials.  In other cases, this will be done by observing that the image of the resulting product under the $\tmf$ Hurewicz homomorphism is non-zero (in one case we instead need to use the fiber of a certain map between $\tmf$-spectra).

However, the bulk of the work in this paper will be devoted to pursuing technique (2) above.  Specifically we will lift non-trivial elements in $\pi_*\tmf$ to $v_2$-periodic elements in $\pi_*^s$.

Most of the classes we construct are $v_2$-periodic - these periodic classes actually imply the existence of exotic spheres in infinitely many dimensions, and limit the remaining dimensions to certain congruence classes, though we do not pursue this here.

For some time we did not know how to construct a non-trivial class in $\coker J$ of Kervaire invariant $0$ in dimension $126$, but Dan Isaksen and Zhouli Xu came up with a clever argument which handles that case (Theorem~\ref{thm:126}). 

We do not know if there are any non-trivial classes in $\coker J_{140}$. This is why we stop there.  But we do end the paper with some remarks which explain that there are actually only a handful of dimensions in the range $140-200$ where we are unable to produce non-trivial classes in $\coker J$. 
Some of these classes were communicated to the authors by Zhouli Xu.
\vspace{24pt}

\subsection*{Organization of the paper}$\quad$

In Section~\ref{sec:piS}, we recall some computations of the $2$-component of the stable homotopy groups of spheres in low dimensions.

In Section~\ref{sec:vn}, we recall the notion of $v_n$-periodicity in the stable stems.

In Section~\ref{sec:J}, we recall facts about $v_1$-periodicity and its relationship to the image of the J homomorphism, and discuss how the Adams families give non-trivial elements of $\coker J$ in degrees congruent to $2$ mod $8$.

In Section~\ref{sec:tmf}, we recall some facts about the theory of topological modular forms (tmf).

In Section~\ref{sec:strategy}, we discuss the method we will use to produce $v_2$-periodic elements by lifting them from the homotopy groups of $\tmf$.
This method will involve producing elements in the homotopy groups of a certain type 2 complex $M(8,v_1^8)$. These elements map to the desired elements after projecting onto the top cell of the complex.   These elements will be produced using a \emph{modified Adams spectral sequence} (MASS).  The $E_2$-term of the MASS will be analyzed by means of the \emph{algebraic tmf resolution}.

In Section~\ref{sec:M38}, we establish many important properties of $M(8,v_1^8)$ which we will need.

In Section~\ref{sec:boi} we discuss the computation of the $E_1$-page of the algebraic tmf resolution by means of computing Ext of \emph{bo-Brown-Gitler modules}.

In Section~\ref{sec:tmfM38}, we compute the modified Adams spectral sequence for $\tmf_*M(8,v_1^8)$.  This allows us to identify classes in $\tmf_*M(8,v_1^8)$ which map to the desired classes in $\tmf_*$ after projection to the top cell.

In Section~\ref{sec:tmfres}, we compute enough of the algebraic tmf-resolution to show that the desired elements persist in the algebraic tmf-resolution.

In Section~\ref{sec:MASS}, we show the desired elements are permanent cycles in the MASS.

In Section~\ref{sec:cokerJ} we tabulate the non-trivial elements of Coker J comprised of the elements produced in the previous sections, together with some other elements produced by ad hoc means, in dimensions less than $140$.
We also include a tentative discussion of the state of affairs below dimension 200.

\subsection*{Conventions}

Throughout this paper we will let $\{x\} \subset \pi_*X$ denote a coset of elements detected by an element $x$ in the $E_2$-term of an Adams spectral sequence (ASS) or Adams-Novikov spectral sequence (ANSS).  
Conversely, for a class $\alpha$ in homotopy, we let $\ul{\alpha}$ denote an element in the ASS which detects it. We let $A_*$ denote the dual Steenrod algebra, $A\mmod A(2)_*$ denote the dual of the Hopf algebra quotient $A\mmod A(2)$, and for an $A_*$-comodule $M$ (or more generally an object of the derived category of $A_*$-comodules) we let
$$ \Ext^{s,t}_{A_*}(M) $$
denote the group $\Ext^{s,t}_{A_*}(\FF_2, M)$.  For a spectrum $E$, we let $E_*$ denote its homotopy groups $\pi_*E$.

\subsection*{Acknowledgments} The authors would like to thank John Milnor, who suggested this project to the third author.  Dan Isaksen and Zhouli Xu provided valuable input, and in particular produced the argument which resolves dimension $126$.  The authors also benefited from comments and corrections from Achim Krause and Larry Taylor.  Finally, this paper was greatly improved by thoughtful suggestions of three referees.

\section{Low dimensional computations of $(\pi_*^s)_{(2)}$}\label{sec:piS}
 
\begin{figure}
\includegraphics[angle = 90, origin=c, height =.8\textheight]{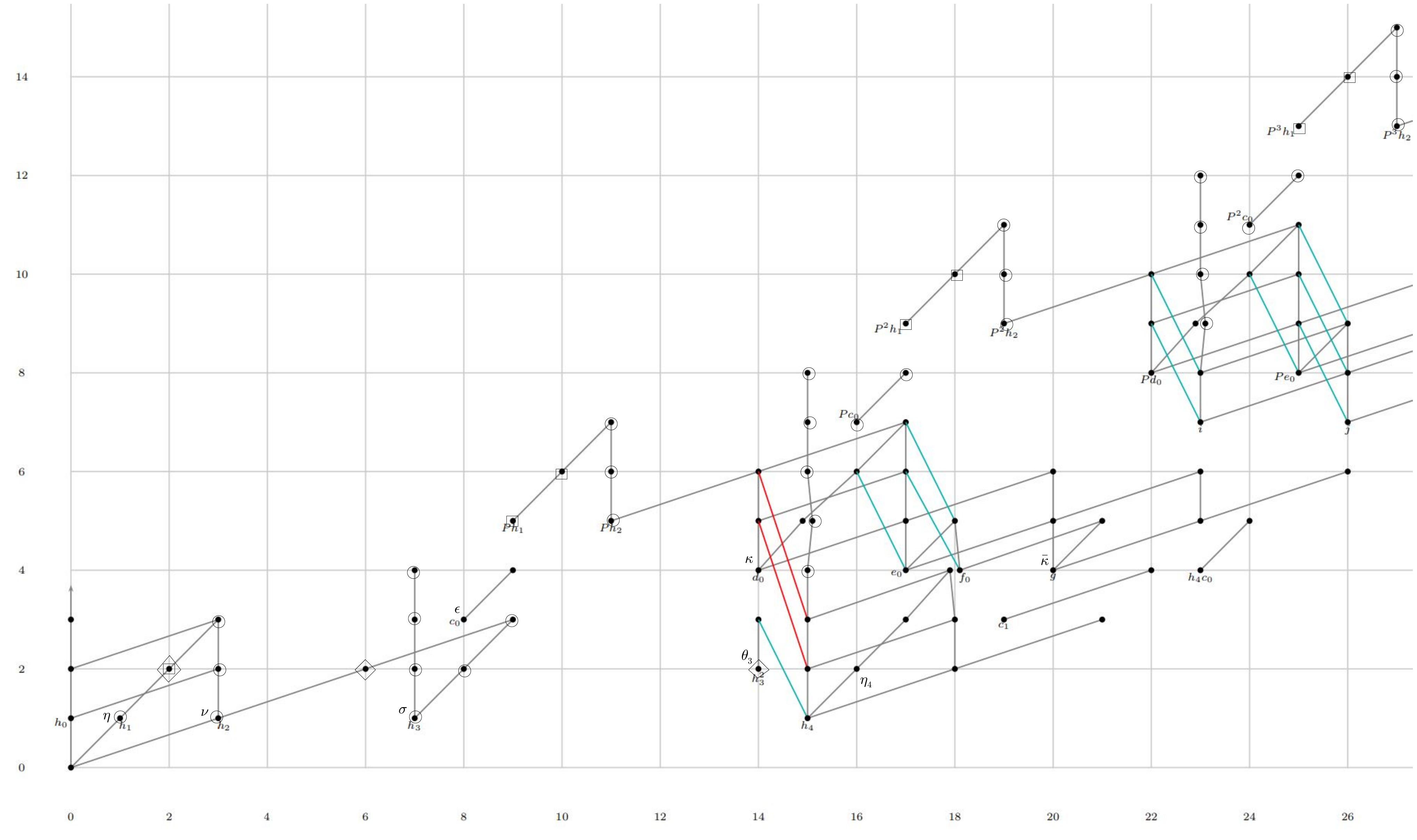}
\caption{The $2$-primary ASS for the sphere in the region $0 \le t-s \le 26$ (courtesy of Dan Isaksen).}\label{fig:ASS26}
\end{figure} 
 
\begin{figure}
\includegraphics[angle = 0, origin=c, height =.9\textheight]{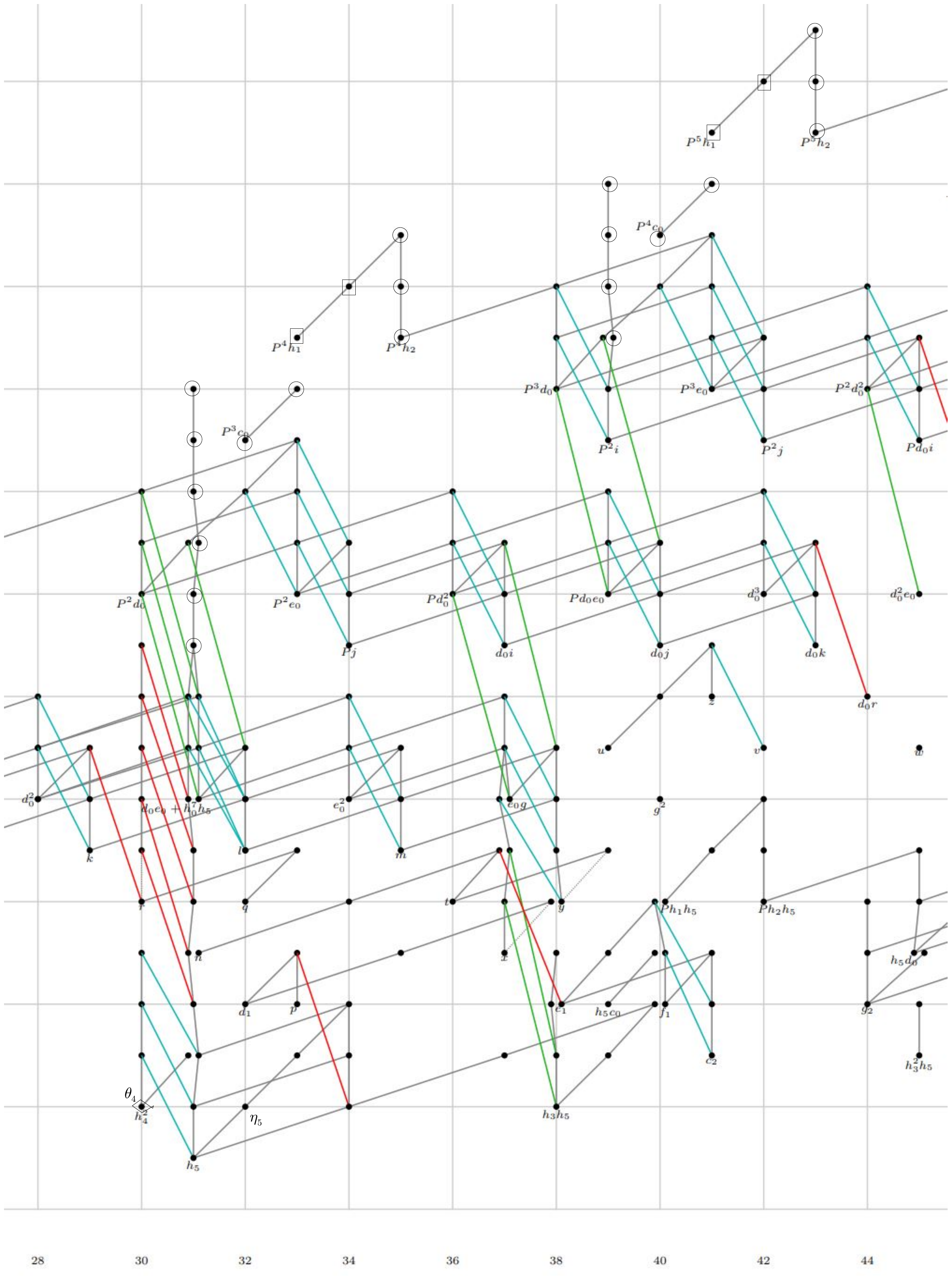}
\caption{The $2$-primary ASS for the sphere in the region $26 \le t-s \le 44$ (courtesy of Dan Isaksen).}\label{fig:ASS44}
\end{figure}  

For the convenience of the reader, we summarize some low dimensional computations of $(\pi_*^s)_{(2)}$.  Generators in this range will play an important part in the remainder of this paper.

Figures~\ref{fig:ASS26} and \ref{fig:ASS44} depict the mod $2$ Adams spectral sequence (ASS)
$$ E_2^{s,t} = \Ext^{s,t}_{A_*}(\FF_2, \FF_2) \Rightarrow (\pi_{t-s}^{s})^{\wedge}_2 $$
in the range $t-s \le 44$.  In these charts, generators are depicted in bidegrees $(t-s,s)$.  Dots represent factors of $\FF_2$.  Lines of non-negative slope represent multiplication by $h_0$, $h_1$, and $h_2$ in the $E_2$-term.  Lines of negative slope depict differentials
$$ d_r: E_r^{s,t} \rightarrow E_r^{s+r,t+r-1}. $$ 

The element
$$ h_0 \in \Ext^{1,1}_{A_*}(\FF_2, \FF_2) $$
detects the degree $2$ map in $\pi_0^s$.  Therefore, if a permanent cycle $x \in E_2^{s,t}$ detects $\alpha \in \pi^s_{t-s}$, then if the element $h_0 x$ is non-trivial in $E^{s+1,t+1}_\infty$, it detects $2\alpha \in \pi^s_{t-s}$.  For example, we see from Figure~\ref{fig:ASS26} that $(\pi^s_3)^{\wedge}_2 \cong \ZZ/8$.  

The elements $h_i \in \Ext^{1,2^i}_{A_*}$ detect elements of $\pi^s_*$ of Hopf invariant $1$.  Adams' work on the Hopf invariant one problem \cite{AdamsHI} implies that the elements $h_i$ support differentials for $i \ge 4$.

In Figures~\ref{fig:ASS26} and \ref{fig:ASS44}, we use circles to denote elements which detect elements in the image of the $J$-homomorphism
$$ \pi_*SO \rightarrow \pi_*^s. $$
These elements fit into a predictable pattern which continues with slope $1/2$ in the Adams spectral sequence chart \cite{MahowaldJ}, \cite{DavisMahowald}.  This pattern an instance of $v_1$-periodicity, which will be explained further in Section~\ref{sec:vn}.  The elements decorated with boxes are $v_1$-periodic, but are not in the image of the $J$-homomorphism.  By Browder's theorem (Theorem~\ref{thm:Browder}), an element in $\pi_*^s$ has Kervaire invariant one if and only if it is detected by an elements $h_j^2$ in the Adams spectral sequence --- these classes are denoted with diamonds.  Thus only dots which are neither marked with a circle or a diamond contribute to the group $\Theta_n$.   

Some generators of $\pi_*^s$ are famous, and are known by names ascribed to them by Toda \cite{Toda}.  We will often refer to such generators by these names.  The table below gives a dictionary with associates a name to an element in the Adams spectral sequence which detects it, and the corresponding dots in Figure~\ref{fig:ASS26} are labeled both with the traditional Ext name, and a Toda name of an element it detects.

\begin{center}
\begin{tabular}{c|c|c}
degree & Toda name & ASS generator \\
\hline
1 & $\eta$ & $h_1$ \\
3 & $\nu$ & $h_2$ \\
7 & $\sigma$ & $h_3$ \\
8 & $\epsilon$ & $c_0$ \\
14 & $\kappa$ & $d_0$ \\
20 & $\bar{\kappa}$ & $g$ \\
$2^{j+1}-2$ & $\theta_j$ & $h_j^2$ \\
$2^{j}$ & $\eta_j$ & $h_1h_j$ \\
\end{tabular}
\end{center}

\section{$v_n$-periodicity}\label{sec:vn}

We recall that there are two formulations of the chromatic filtration in the stable stems: one coming from the chromatic tower, and the second coming from the Hopkins-Smith Periodicity Theorem (see \cite{Ravenelorange} for a more detailed exposition of this discussion).

\subsection*{The chromatic tower}

Fix a prime $p$. The chromatic tower of a spectrum $X$ is the tower of Bousfield localizations
$$ X \rightarrow \cdots \rightarrow X_{E(n)} \rightarrow X_{E(n-1)} \rightarrow \cdots \rightarrow X_{E(0)} $$
where $E(n)$ is the $n$th Johnson-Wilson  spectrum ($E(0) = H\QQ$, by convention) with
$$ E(n)_* = \ZZ_{(p)}[v_1, \ldots, v_{n-1}, v_n^{\pm}]. $$
The fibers of the chromatic tower
$$ M_n X \rightarrow X_{E(n)} \rightarrow X_{E(n-1)} $$
are called the \emph{monochromatic layers}.  The spectral sequence associated to the chromatic tower is the chromatic spectral sequence
$$ E_1^{n,*} = \pi_*M_nX \Rightarrow \pi_*X_{(p)}. $$
The Hopkins-Ravenel Chromatic Convergence Theorem \cite{HopkinsRavenel} states that this spectral sequence converges for $X$ finite.  We shall say an element $x$ in $\pi_*X_{(p)}$ has \emph{chromatic filtration $n$} if it is detected on the $n$-line $\pi_*M_nS$ of the chromatic spectral sequence.

The efficacy of the chromatic spectral sequence (in the case of $X = S$) is that the $E(n)$-based Adams-Novikov spectral sequences
$$ \Ext_{E(n)_*E(n)}(E(n)_*, E(n)_*/(p^\infty, v_1^{\infty}, \ldots, v_{n-1}^{\infty})) \Rightarrow \pi_*M_nS $$
converge at finite stages and are in principle completely computable (though at present the vast majority of such computations have been carried out for $n \le 2$).
For given $i_0, \ldots i_{n-1}$, there exists a $k$ so that the element 
$$ v^{k}_n \in E(n)_*/(p^{i_0}, \ldots, v_{n-1}^{i_{n-1}}) $$ 
is primitive with respect to the $E(n)_*E(n)$-coaction.  Since $v_n$ is invertible, it follows that the Ext groups
$$ \Ext_{E(n)_*E(n)}(E(n)_*, E(n)_*/(p^{i_0}, v_1^{i_1}, \ldots, v_{n -1}^{i_{n-1}})) $$
are $v_n^k$-periodic.  Since
\begin{multline*}
\Ext_{E(n)_*E(n)}(E(n)_*, E(n)_*/(p^\infty, v_1^{\infty}, \ldots, v_{n-1}^{\infty})) = \\
\colim_{{i_0, i_1, \cdots, i_{n-1}}} \Ext_{E(n)_*E(n)}(E(n)_*,  E(n)_*/(p^{i_0}, v_1^{i_1}, \ldots, v_{n-1}^{i_{n-1}})),
\end{multline*}
this suggests that elements of chromatic filtration $n$ fit into periodic families.

\subsection*{The telescopic chromatic tower}

Bousfield localization $X_E$ is obtained by killing $E$-acyclic spectra.  There is a variant $X^f_E$, called \emph{finite localization}, where one kills only finite $E$-acyclic spectra \cite{Millerfinite}.  The telescope conjecture hypothesizes that in the case of $E(n)$, the map
$$ X^f_{E(n)} \rightarrow X_{E(n)} $$
is an equivalence.  The telescope conjecture has been proven in the case of $n = 1$ \cite{Mahowaldbo} \cite{Miller}, but is believed to be false for $n \ge 2$ \cite{MahowaldRavenelShick}.

Associated to this different type of localization is the \emph{telescopic chromatic tower}
$$ X \rightarrow \cdots \rightarrow X^f_{E(n)} \rightarrow X^f_{E(n-1)} \rightarrow \cdots \rightarrow X^f_{E(0)}. $$
We shall denote the fibers by $M_n^fX$.  

The relevance of this telescopic variant is that the monochromatic layers are directly constructed from topological analogs of the algebraic constructs of the previous subsection.  Specifically, the Hopkins-Smith periodicity theorem \cite{Nilp2} implies, for a cofinal sequence of multi-indices $(i_0, \ldots, i_{n-1})$, the existence of \emph{generalized Moore spectra}
$$ M(p^{i_0}, v_1^{i_1}, \ldots, v_{n-1}^{i_{n-1}}) $$
which have the property that 
$$ E(n)_*M(p^{i_0}, v_1^{i_1}, \ldots, v_{n-1}^{i_{n-1}}) \cong E(n)_*/(p^{i_0}, v_1^{i_1}, \ldots, v_{n-1}^{i_{n-1}}). $$
These spectra are examples of \emph{type $n$ spectra}.  The periodicity theorem states that all such spectra admit $v_n$-self maps: specialized to our situation,  
for certain $k$ these spectra have \emph{$v_n$-self maps} 
$$ v_n^k: \Sigma^{2k(p^n-1)}M(p^{i_0}, v_1^{i_1}, \ldots, v_{n-1}^{i_{n-1}}) \rightarrow M(p^{i_0}, v_1^{i_1}, \ldots, v_{n-1}^{i_{n-1}}) $$
which induce multiplication by $v_n^k$ on $E(n)$-homology.  
There is an equivalence 
\begin{equation}\label{eq:Mnf}
 M^f_nX \simeq \hocolim_{(i_0, \ldots, i_{n-1})} M^0(p^{i_0}, v_1^{i_1}, \ldots, v_{n-1}^{i_{n-1}})^f_{E(n)} \wedge X
 \end{equation}
(where the superscript zero above indicates that the finite spectrum is desuspended so that its top cell lies in dimension $0$).
The term ``telescopic'' comes from the formula \cite{Millerfinite}
\begin{equation}\label{eq:telescope}
M(p^{i_0}, v_1^{i_1}, \ldots, v_{n-1}^{i_{n-1}})^f_{E(n)} \simeq (v_n^k)^{-1}M(p^{i_0}, v_1^{i_1}, \ldots, v_{n-1}^{i_{n-1}})
\end{equation}
where the localization on the right hand side is obtained by taking the homotopy colimit (telescope) of iterates of the $v_n$-self map. 
It follows that the homotopy groups of the finite localization above are $v_n^k$-periodic.  We shall therefore say that an element $x$ in $\pi_*X_{(p)}$ is \emph{$v_n$-periodic} if it is detected in $\pi_*M^f_nX$ in the telescopic chromatic tower. 

This $v_n^k$-periodicity is reflected in the (non-telescopic) monochromatic layers, since the $v_n$-self maps are $E(n)$-equivalences, giving
$$ M(p^{i_0}, v_1^{i_1}, \ldots, v_{n-1}^{i_{n-1}})_{E(n)} \simeq (v_n^k)^{-1}M(p^{i_0}, v_1^{i_1}, \ldots, v_{n-1}^{i_{n-1}})_{E(n)}. $$
Note that $E(n)$-localization is smashing \cite{Nilp2}, and therefore it commutes with homotopy colimits.  We also therefore have 
$$ M_nX \simeq \hocolim_{(i_0, \ldots, i_{n-1})} M^0(p^{i_0}, v_1^{i_1}, \ldots, v_{n-1}^{i_{n-1}})_{E(n)} \wedge X. $$

\subsection*{$v_n$-periodic families from finite complexes}

We will define
$$ M^0(p^\infty, \ldots, v_n^\infty) := \hocolim_{(i_0, \ldots, i_{n})} 
M^0(p^{i_0}, \ldots, v_{n}^{i_{n}}). $$
By taking colimits over $(i_0, \ldots, i_n)$ of the fiber sequences 
$$ M^0(p^{i_0}, \ldots, v_n^{i_n}) \rightarrow M^0(p^{i_0}, \ldots, v_{n-1}^{i_{n-1}}) \xrightarrow{v_n^{i_n}} \Sigma^{-i_n\abs{v_n}} M^0(p^{i_0}, \ldots, v_{n-1}^{i_{n-1}})
$$
and applying (\ref{eq:Mnf}) and (\ref{eq:telescope}), we get fiber sequences
$$ M^0(p^\infty, \ldots, v_n^\infty) \rightarrow M^0(p^\infty, \ldots, v_{n-1}^\infty) \rightarrow M_n^fS. $$
It follows that there are fiber sequences
$$ M^0(p^\infty, \ldots, v_n^\infty) \wedge X \xrightarrow{t} X \rightarrow X^f_{E(n)} $$
(where $t$ is projection on the top cell of the first term).  
Suppose that $x \in \pi_*S_{(p)}$ is $v_n$-periodic.  Then
$x$ lifts to an element
$$ \td{x} \in \pi_*M^0(p^{i_0}, \ldots, v^{i_{n-1}}_{n-1}) \wedge X $$
with $v_n^{sk}\td{x} \ne 0$ for all $s$ (where the generalized Moore complex above has a $v_n^k$-self map).
Let $v_n^{sk} x \in \pi_* X$ denote the image of $v_n^{sk}\td{x}$ under the projection to the top cell of the generalized Moore complex (note that this notation hides the fact that many choices were made, such as the homotopy type of the generalized Moore spectrum, the lift $\td{x}$, and the $v_n^k$-self map).
We shall call the family $\{v_n^{sk} x\}_{s \ge 0}$ a \emph{$v_n$-periodic family generated by $x$}.  Note that it may be that for $s>0$, some or all of the elements $v_n^{sk}x$ might actually be null.  However, we do at least have the following.

\begin{lem}
If the element $v_n^{sk}x \in \pi_*X$ defined above is not null, then it is $v_n$-periodic.
\end{lem}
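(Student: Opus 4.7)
The plan is to carry over the construction witnessing $v_n$-periodicity of $x$ directly to the element $v_n^{sk}x$, using the identity $v_n^{jk}(v_n^{sk}\td{x}) = v_n^{(j+s)k}\td{x}$ to transport the hypothesis $v_n^{lk}\td{x}\neq 0$ for all $l\ge 0$. Setting $M := M(p^{i_0}, v_1^{i_1}, \ldots, v_{n-1}^{i_{n-1}})$, the class $v_n^{sk}\td{x} \in \pi_* M \wedge X$ will serve as a lift of $v_n^{sk}x$ along the top-cell projection $t_M: M \wedge X \to X$, and all of its $v_n^k$-iterates will again be nonzero. Combined with the assumption $v_n^{sk}x \ne 0$, this should let me certify $v_n^{sk}x$ as $v_n$-periodic by the same scheme that certifies $x$.

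First I would verify that the image of $v_n^{sk}x$ in $\pi_* X^f_{E(n-1)}$ vanishes. Since $M$ is a finite type $n$ complex, it is finite $E(n-1)$-acyclic, and because finite $E(n-1)$-localization is smashing, $(M \wedge X)^f_{E(n-1)} \simeq M^f_{E(n-1)} \wedge X \simeq 0$. Hence the composite $M \wedge X \xrightarrow{t_M} X \to X^f_{E(n-1)}$ is null. By the fiber sequence $M_n^f X \to X^f_{E(n)} \to X^f_{E(n-1)}$, the image of $v_n^{sk}x$ in $\pi_* X^f_{E(n)}$ then lifts to $\pi_* M_n^f X$. Next I would use the identification $M_n^f X \simeq \hocolim (v_n^{-1}M)\wedge X$ from (\ref{eq:Mnf}) and (\ref{eq:telescope}) to produce a candidate lift explicitly: the hypothesis $v_n^{jk}\td{x}\neq 0$ for all $j$ forces $\td{x}$ to have nonzero image in $\pi_*(v_n^{-1}M)\wedge X$ (otherwise some power of the invertible operator $v_n^k$ would annihilate it), so $v_n^{sk}\td{x}$ does too, and pushing along the natural map $(v_n^{-1}M)\wedge X \to M_n^f X$ yields a candidate detecting class $[v_n^{sk}\td{x}] \in \pi_* M_n^f X$.

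The hard part is to verify that $[v_n^{sk}\td{x}]$ is genuinely nonzero in $\pi_* M_n^f X$; this is where the hypothesis $v_n^{sk}x \ne 0$ in $\pi_* X$ is needed. If $[v_n^{sk}\td{x}] = 0$, then by naturality of the $E(n)$-localization applied to $M \to S^0$, the image of $v_n^{sk}x$ in $\pi_* X^f_{E(n)}$ also vanishes; combined with the vanishing of the image in $\pi_* X^f_{E(n-1)}$ from the previous paragraph, the fiber sequence $M^0(p^\infty,\ldots,v_n^\infty)\wedge X \xrightarrow{t} X \to X^f_{E(n)}$ would force $v_n^{sk}x$ to lie in the image of $t$. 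A diagram chase comparing any such lift with the explicit representative $v_n^{sk}\td{x} \in \pi_* M \wedge X$, using that the latter has nontrivial image in $\pi_*(v_n^{-1}M)\wedge X$, should yield the needed contradiction and complete the proof that $v_n^{sk}x$ is detected by a nonzero class in $\pi_* M_n^f X$, hence is $v_n$-periodic.
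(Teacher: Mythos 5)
Your first two paragraphs are sound: the vanishing of the image of $v_n^{sk}x$ in $X^f_{E(n-1)}$ via smashing finite localizations, and the observation that $v_n^{sk}\td{x}$ has nonzero image in the telescope $(v_n^{k})^{-1}M\wedge X$ because all of its $v_n^{k}$-iterates are nonzero, are both correct. The gap is in the final step, which is exactly where you locate "the hard part" and which you resolve only with "a diagram chase \dots should yield the needed contradiction." That chase does not exist in the form you propose. If $[v_n^{sk}\td{x}]=0$ in $\pi_*M_n^fX$, you correctly conclude that $v_n^{sk}x$ dies in $X^f_{E(n)}$ and hence lifts along $t$; but there is no contradiction to extract from comparing such a lift with $v_n^{sk}\td{x}$. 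The map $(v_n^{k})^{-1}M\wedge X \rightarrow M_n^fX$ is one stage of a filtered colimit over larger generalized Moore spectra and need not be injective on homotopy, so nonvanishing of $v_n^{sk}\td{x}$ in the telescope is perfectly consistent with its image vanishing in $M_n^fX$; and the hypothesis $v_n^{sk}x\ne 0$ in $\pi_*X$ does not rule this out either, since a nonzero element that dies in $X^f_{E(n)}$ would simply be detected in higher telescopic filtration. In short, $v_n^{sk}x\ne 0$ cannot by itself force the nontriviality of the candidate detecting class, and you never invoke the one hypothesis that can: that $x$ itself is $v_n$-periodic in the paper's sense, i.e.\ detected by a \emph{nonzero} class of $\pi_*M_n^fX$ (the image of $\td{x}$).

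The paper's argument runs in the opposite direction and is essentially one line: since $x$ is $v_n$-periodic, the image of $x$ (via its lift $\td{x}$) in $\pi_*M_n^fX$ is nontrivial; because $v_n^{k}$ acts invertibly on the localized Moore spectrum, the class detecting $v_n^{sk}x$ --- the image of $v_n^{sk}\td{x}$ --- differs from the class detecting $x$ by an invertible $v_n^{sk}$-multiplication, hence is also nontrivial. The hypothesis that $v_n^{sk}x$ is not null enters only to make the conclusion meaningful (a zero class is not detected by anything), not as the engine of nontriviality in $\pi_*M_n^fX$. To repair your write-up, replace the contradiction scheme of the last paragraph by this direct transport of $x$'s detecting class along the invertible $v_n$-action.
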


\begin{proof}
The assumption that $x$ is $v_n$-periodic implies that the image of $x$ in $M_n^fX$ is non-trivial.  Then it follows that the image of $v_n^{sk}x$ in $M_n^fX$ is non-trivial.
\end{proof}

One notable potential difference between the telescopic chromatic tower and the chromatic tower is that $v_n$-periodic elements generate $v_n$-periodic families, whereas elements of chromatic filtration $n$ simply generate $v_n$-periodic families in $X_{E(n)}$ which do not necessarily lift to $\pi_*X$.  Note that since there is a map
$$ X^f_{E(n)} \rightarrow X_{E(n)}, $$
a $v_n$-periodic element has chromatic filtration greater than or equal to $n$. 

One explicit way of producing $v_n$-periodic elements is simply to produce an element 
$$ y \in \pi_*M(p^{i_0}, \ldots, v_{n-1}^{i_{n-1}}) \wedge X $$
for which the projections of the $v_n$-iterates $v_n^{sk} y$ to the top cell of the generalized Moore spectrum are all non-zero.  For example, the $p$-component of the image of the $J$ homomorphism ($p$ odd) is generated by the various $v_1$-periodic composites
$$ \alpha_{k/i} :S^{2k(p-1)} \hookrightarrow \Sigma^{2k(p-1)} M(p^i) \xrightarrow{v_1^{k}} M(p^i) \rightarrow S^1 $$
(where $k$ is chosen so that the $v_1$-self map exists).

This construction has an obvious generalization --- we may define $v_2$-periodic elements $\beta_{k/j,i} \in \pi_*^s$ to be the composites
$$ \beta_{k/i,j}: S^{k\abs{v_2}} \hookrightarrow \Sigma^{k\abs{v_2}} M(p^i,v_1^j) \xrightarrow{v_2^k} M(p^i, v_1^j) \rightarrow S^{j\abs{v_1}+2} $$
These were shown in \cite{MRW} to be non-trivial for all combinations $(i,j,k)$ for which these elements exist.  One denotes $\beta_{k/j,1}$ by $\beta_{k/j}$, and $\beta_{k/1}$ is simply denoted $\beta_k$.  These elements will appear in the tables of Section~\ref{sec:cokerJ}.

In general, the composites 
\begin{multline*} \alpha^{(n)}_{k/i_{n-1}, \ldots i_{0}}: S^{k\abs{v_n}} \hookrightarrow M(p^{i_0}, \ldots, v_{n-1}^{i_{n-1}}) \xrightarrow{k\abs{v_n}} M(p^{i_0}, \ldots, v^{i_{n-1}}_{n-1}) \\ \rightarrow S^{i_1\abs{v_1}+\cdots+i_{n-1}\abs{v_{n-1}}+n}
\end{multline*}
are called the \emph{$n$th Greek letter elements} (here $\alpha^{(n)}$ is the $n$th letter in the Greek alphabet).  We know very little about these $v_n$-periodic families for $n \ge 3$. 

\section{$v_1$-periodicity and the image of $J$}\label{sec:J}

Fix a prime $p$, and let $\ell$ be a topological generator of $\ZZ_p^\times/\{\pm 1\}$.  The $(p)$-local $J$ spectrum is defined to be the fiber
\begin{equation}\label{eq:J}
 J_{(p)} \rightarrow KO_{(p)} \xrightarrow{\psi^\ell - 1} KO_{(p)}
 \end{equation}
where $\psi^\ell$ is the $\ell$th stable Adams operation.

Using work of Adams-Baird and Ravenel, Bousfield \cite{Bousfield} shows there is a fiber sequence
$$ S_{E(1)} \rightarrow J_{(p)} \rightarrow \Sigma^{-1}H\QQ $$
so that the following diagram commutes.
$$
\xymatrix{
\pi_*SO \ar[r] \ar[d]_{J} & 
\pi_*\Sigma^{-1}KO_{(p)} \ar[dd]^{\partial}
\\
\pi_*S  \ar[d] 
\\
\pi_*S_{E(1)} \ar[r] &
\pi_*J_{(p)}
}
$$
Here, $\partial$ is the boundary homomorphism associated to the fiber sequence (\ref{eq:J}).  The work of Adams \cite{Adams} shows that the resulting map
$$ (\im J)_{k} \rightarrow \pi_k S_{E(1)} $$
is an isomorphism for $p$ odd and $k > 0$ (and an injection for $p = 2$).
As explained in the introduction, we are only concerned with $\coker J$ in even dimensions.  The only non-trivial even dimensional homotopy groups of $SO$ are 
$$ \pi_{8k} SO = \ZZ/2, $$
and the $J$ homomorphism is non-trivial in these degrees \cite{Adams}.  The images of these elements have ASS names 
$$
\im J_{8k} = 
\begin{cases}
\ZZ/2\{ h_3 h_1 \}, & k = 1, \\ 
\ZZ/2\{ P^{k} c_0 \}, & k > 1.
\end{cases}
$$

At the prime $2$ we deduce from the diagram above that there is an exact sequence
$$ 0 \rightarrow (\im J)_{k} \rightarrow \pi_k S \xrightarrow{h_{KO}} \pi_k KO_{(2)}
$$
where $h_{KO}$ is the $KO$-Hurewicz homomorphism.  For positive $k$, the image of $h_{KO}$ is only non-trivial for $k \equiv 1,2 \pmod 8$, where it is isomorphic to $\ZZ/2$ \cite{Adams}. We therefore have for $k \ge 1$ \cite{Mahowaldbo}
\begin{align*}
v_1^{-1}\coker J_{8k+1} = \ZZ/2\{P^{k} h_1 \}, \\
v_1^{-1} \coker J_{8k+2} = \ZZ/2\{P^{k} h_1^2 \}
\end{align*}
where by $v_1^{-1}\coker J$ we mean the cokernel of the composite
$$ \pi_*SO \xrightarrow{J} \pi_*S \rightarrow \pi_* S_{E(1)}. $$
Note that $h_1$ is in the image of $J$, and 
$$ \Phi_K(\{ h_1^2 \}) = 1. $$
We deduce

\begin{prop}
$\Theta_{8k+2} \ne 0$ for $k > 1$.
\end{prop}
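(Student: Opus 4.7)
My plan is to exhibit the $v_1$-periodic class $\{P^k h_1^2\}$ as a nonzero element of $\Theta_{8k+2}$.  Two ingredients are already in hand: Adams' identification $v_1^{-1}\coker J_{8k+2} = \ZZ/2\{P^k h_1^2\}$ recalled immediately before the proposition, and Browder's theorem (Theorem~\ref{thm:Browder}) restricting Kervaire invariant one to stems of the form $2^{j+1}-2$.

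The first step is to show that $\{P^k h_1^2\}$ is already nonzero in the unlocalized group $\coker J_{8k+2}$.  Because the natural map
\[
 \coker J_{8k+2} \rightarrow v_1^{-1}\coker J_{8k+2}
\]
sends this coset to a generator of the target $\ZZ/2$, it cannot be zero in the source.  The second step is a brief numerology check that rules out $8k+2$ as a Kervaire invariant stem: solving $8k+2 = 2^{j+1}-2$ gives $2^{j-1} = 2k+1$, and since the right-hand side is an odd integer, we are forced into $j=1$, $k=0$.  Hence for any $k \geq 1$ (in particular for $k > 1$), no element of $\pi^s_{8k+2}$ can have Kervaire invariant one, so $\Phi_K$ vanishes identically on $\coker J_{8k+2}$.

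Feeding these two facts into the Kervaire--Milnor exact sequence from the introduction (applied with $4m+2 = 8k+2$) gives $\Theta_{8k+2} = \ker \Phi_K = \coker J_{8k+2}$, which contains the nonzero class $\{P^k h_1^2\}$.  There is no substantive obstacle here --- the proof is a direct assembly of Adams' $v_1$-periodic computation of $\coker J$ at the prime $2$ and Browder's theorem, the only non-formal ingredient being the elementary numerology above.  Note also that the same argument actually produces a $v_1$-periodic family in $\Theta_{8k+2}$, consistent with the introduction's remark that the $v_n$-periodic classes built in the paper yield infinitely many dimensions carrying exotic spheres.
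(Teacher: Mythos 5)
Your proposal is correct and follows essentially the same route the paper intends: the proposition is stated as a direct deduction from Adams' computation $v_1^{-1}\coker J_{8k+2} = \ZZ/2\{P^k h_1^2\}$, the vanishing of $\Phi_K$ in stems $8k+2$ forced by Browder's theorem, and the Kervaire--Milnor exact sequence, which is exactly the assembly you carry out (your numerology even shows the statement holds for $k\ge 1$, not just $k>1$).
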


Thus we are left to consider dimensions congruent to $-2$ mod $8$, and dimensions congruent to $0$ mod $4$.  As the image of $J$ is detected in $\pi_*S_{E(1)}$, the only classes besides those listed above which can contribute to $\coker J$ in these dimensions must be $v_n$-periodic for $n \ge 2$.

\section{Topological modular forms}\label{sec:tmf}

We saw in the previous section that lifting elements of $\pi_*KO$ to $v_1$-periodic families in $\pi_*^s$ allowed us to deduce the non-triviality of $\coker J$ in certain dimensions.

The bulk of this paper concerns following a similar strategy, where we replace the real $K$-theory spectrum $KO$ with the spectrum of topological modular forms $\tmf$.  In this section we give a brief overview of some important facts regarding this spectrum.  We refer the reader to \cite{Behrenstmf} and \cite{tmf} for more thorough accounts.  

Let $\mc{M}_{\mit{ell}}$ denote the Deligne-Mumford stack of elliptic curves (over $\mr{Spec}(\ZZ)$).  For a commutative ring $R$, the groupoid of $R$-points of $\mc{M}_{\mit{ell}}$ is the groupoid of elliptic curves over $R$.  This stack carries a line bundle $\omega$ where for an elliptic curve $C$, the fiber of $\omega$ over $C$ is given by
$$ \omega_C = T^*_eC, $$
the tangent space of $C$ at its basepoint $e$.

The stack $\mc{M}_{\mit{ell}}$ admits a compactification $\br{\mc{M}}_{\mit{ell}}$ whose $R$ points are generalized elliptic curves (a generalized elliptic curve is a curve over $R$ whose geometric fibers are either elliptic curves or N\'eron $n$-gons) \cite{DeligneRapoport}.  The space of integral modular forms of weight $k$ is defined to be the sections
$$ H^0(\br{\mc{M}}_{\mit{ell}}, \omega^{\otimes k}). $$
We have \cite{Deligne}
$$ H^0(\br{\mc{M}}_{\mit{ell}}, \omega^{\otimes *}) = \ZZ[c_4, c_6, \Delta]/( c_6^2 = c_4^3-1728 \Delta) $$
where $c_k$ has weight $k$.

Goerss-Hopkins-Miller showed that the graded sheaf $\omega^{\otimes *}$ admits a lift to the stable homotopy category.  Namely, they proved that there exists a sheaf $\mc{O}^{\mit{top}}$ of $E_\infty$-ring spectra on the \'etale site $(\br{\mc{M}}_{\mit{ell}})_{et}$ with the property that the spectrum of sections for an \'etale affine open
$$ \mr{Spec}(R) \xrightarrow{C} \br{\mc{M}}_{\mit{ell}} $$
is a spectrum $E_C$ with canonical isomorphisms
\begin{align*}
\pi_{2k}E_C \cong H^0(\mr{spec}(R), \omega^{\otimes k}), \\
\widehat{C} \cong \mr{Spf}{ E^0_C \CC P^\infty}.
\end{align*}
Here, the last isomorphism is between the formal group of the elliptic curve $C$, and the formal group associated to ring spectrum $E_C$.
 
It follows from the fact $\mc{O}^{top}$ is a sheaf in the \'etale topology, and that $\br{\mc{M}}_{ell}$ admits an \'etale cover by affines, that for arbitrary \'etale maps
$$ U \rightarrow \br{\mc{M}}_{\mit{ell}} $$
there is a descent spectral sequence
$$ H^s(U, \omega^{\otimes k}) \Rightarrow \pi_{2k-s} \mc{O}^{top}(U). $$
Motivated by the definition of integral modular forms and this descent spectral sequence in the case of $U = \br{\mc{M}}_{\mit{ell}}$, the spectrum $\Tmf$ is defined to be the global sections 
$$ \Tmf := \mc{O}^{top}(\br{\mc{M}}_{\mit{ell}}). $$

The descent spectral sequence has been computed for $\Tmf$ (see \cite{Konter}).
The spectrum $\Tmf$ is not connective, in part coming from the fact that Serre duality contributes to $H^1(\br{\mc{M}}_{\mit{ell}}, \omega^{\otimes k})$ for $k < 0$, but it also must be noted that there is $2$ and $3$ torsion in $H^s(\br{\mc{M}}_{\mit{ell}}, \omega^{\otimes k})$ for $s \ge 1$ and for both positive and negative values of $k$.  Strangely, differentials wipe out everything in $\pi_t\Tmf$ for $-20 \le t \le -1$.  This phenomenon motivates the definition of the \emph{connective spectrum of topological modular forms} as the connective cover
$$ \tmf := \Tmf\bra{0}. $$ 

There are many other nice features of $\tmf$.  One which we highlight is that the mod $2$ cohomology of $\tmf$ is finitely presented over the Steenrod algebra, and we have \cite{Mathew} 
$$ H^*(\tmf; \FF_2) \cong A\mmod A(2) $$
and thus for any connective spectrum $X$ the mod 2 ASS for $\tmf \wedge X$ takes the form
$$ \Ext^{s,t}_{A(2)_*}(H_*(X;\FF_2)) \Rightarrow \pi_*(\tmf \wedge X)^{\wedge}_2. $$

Also, the modular form $\Delta$ is not a permanent cycle in the descent spectral sequence for $\Tmf$, but $\Delta^{24}$ is, and it turns out that there is an injection 
$$ \pi_* \tmf \hookrightarrow \Delta^{-{24}}\pi_*tmf. $$
Thus $\pi_*\tmf$ is $\abs{\Delta^{24}} = 576$-periodic.  This periodicity improves to $72$-periodicity 3-locally and $192$-periodicity 2-locally (and $24$-periodicity at all other primes).  

\begin{figure}
\includegraphics[angle = 90, origin=c, height =.7\textheight]{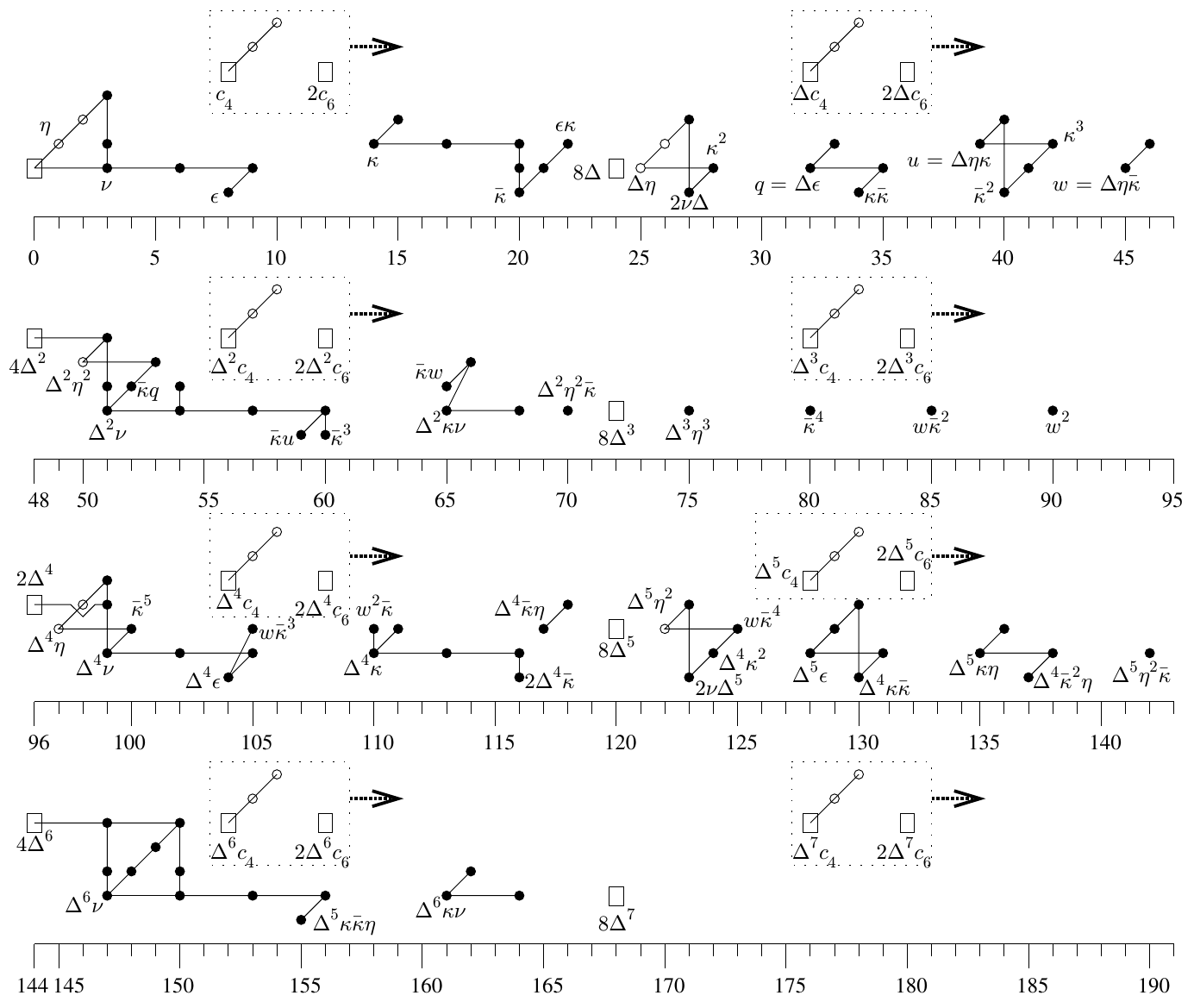}
\caption{The homotopy groups of $\tmf_{(2)}$ ($\pi_*\TMF_{(2)}$ is obtained by inverting $\Delta$)}\label{fig:tmf2}
\end{figure}

For the reader's convenience, we recall the homotopy groups $\pi_*\tmf_{(2)}$ in Figure~\ref{fig:tmf2}.  In this figure:
\begin{itemize}
\item A series of $i$ black dots joined by vertical lines corresponds to a factor of $\ZZ/2^i$ which is annihilated by some power of $c_4$.

\item An open circle corresponds to a factor of $\ZZ/2$ which is not annihilated by a power of $c_4$.

\item A box indicates a factor of $\ZZ_{(2)}$ which is not annihilated by a power of $c_4$.

\item The non-vertical lines indicate multiplication by $\eta$ and $\nu$.

\item A pattern with a dotted box around it and an arrow emanating from the right face indicates this pattern continues indefinitely to the right by $c_4$-multiplication (i.e. tensor the pattern with $\ZZ_{(2)}[c_4]$).
\end{itemize}
After localization at the prime $2$, the element $\Delta^{8}$ is a permanent cycle in the descent spectral sequence, and $\pi_*\tmf_{(2)}$ is given by tensoring the pattern depicted in Figure~\ref{fig:tmf2} with $\ZZ[\Delta^8]$.  Our choice of names for generators in Figure~\ref{fig:tmf2} is motivated by the fact that the elements
$$ \eta, \nu, \epsilon, \kappa, \bar{\kappa}, q, u, w $$
in the 2-primary stable homotopy groups of spheres (see Section~\ref{sec:piS}) map to the corresponding elements in $\pi_*\tmf_{(2)}$ under the $\tmf$-Hurewicz homomorphism.  

In analogy to how $K$ theory has a connective and non-connective variant, we define the \emph{non-connective spectrum of topological modular forms} to be the localization
$$ \TMF := \Delta^{-24} \tmf. $$
There are two things to note about this spectrum.  Firstly, $\TMF$ is the value of the sheaf $\mc{O}^{top}$ on the non-compactified moduli stack of elliptic curves
$$ \TMF \simeq \mc{O}^{top}(\mc{M}_{\mit{ell}}). $$
Secondly, the $\Delta$-periodicity described above is a form of $v_2$-periodicity, which is witnessed by the fact that for any type $2$-spectrum $M$, we have
$$ \TMF \wedge M \simeq (\tmf \wedge M)^f_{E(2)} \simeq (\tmf \wedge M)_{E(2)}. $$

Finally, we note that certain variants of $\TMF$ can be constructed associated to congruence subgroups of $SL_2(\ZZ)$.  Specifically, associated to the congruence subgroup
$$ \Gamma_0(\ell) := \{ A \in SL_2(\ZZ) \: : \: \text{$A$ is upper triangular mod $\ell$} \} $$
there is an \'etale cover
$$ f: \mc{M}(\Gamma_0(\ell)) \rightarrow \mc{M}_{\mit{ell}}[1/\ell] $$
and an associated spectrum (see \cite{Behrens})
$$ \TMF_0(\ell) := \mc{O}^{top}(\mc{M}(\Gamma_0(\ell))). $$
Geometrically, the moduli stack $\mc{M}(\Gamma_0(\ell))$ is the moduli stack of pairs $(C,H)$ where $C$ is an elliptic curve and $H \le C$ is a cyclic subgroup of order $\ell$, and the map $f$ is defined on $R$-points by forgetting the level structure:
$$ f_{\ell}: (C,H) \mapsto C. $$
The functoriality of $\mc{O}^{top}$ associates to the map $f$ is a map of $E_{\infty}$-ring spectra
$$ f_{\ell}^*: \TMF \rightarrow \TMF_0(\ell). $$
There is another \'etale map
$$ q_\ell : \mc{M}(\Gamma_0(\ell)) \rightarrow \mc{M}_{\mit{ell}} $$
which on $R$-points is given by
$$ q_\ell: (C,H) \mapsto C/H. $$
Associated to this map is an $E_\infty$-ring map
$$ q_\ell^*: \TMF \rightarrow \TMF_0(\ell) $$
which serves as a kind of generalization of the classical $\ell$th Adams operations to $\TMF$.
These operations, and the generalizations of the $J$-spectrum which may be constructed using these, are studied in \cite{Behrens}, \cite{MahowaldRezk}, and \cite{BO}.

\section{The strategy for lifting elements from $\tmf_*$}\label{sec:strategy}

For the remainder of this paper (until Section~\ref{sec:cokerJ}) we will be working $2$-locally, and homology will be implicitly taken with mod 2 coefficients.  In this section we outline our strategy to lift elements from $\pi_*\tmf$ to $\pi_*^s$.  Namely, given a $v_2$-periodic element 
$x \in  \tmf_*$, we will lift it to
$$ \td{x} \in \tmf_* M(2^i, v_1^j) $$
so that the projection to the top cell maps $\td{x}$ to $x$.  We will then show, using the $\tmf$-resolution, that $\td{x}$ lifts to an element 
$$ \td{y} \in \pi_*M(2^i, v_1^j). $$ 
Then the image 
$$ y \in \pi_*^s $$
given by projecting $\td{y}$ to the top cell is an element whose image under the $\tmf$-Hurewicz homomorphism is $x$.  \emph{The element $y$ is $v_2$-periodic because its image in $\tmf$ is $v_2$-periodic}.

The first major observation is the following.

\begin{prop}
Every $v_2$-periodic element $x \in \pi_{*}\tmf$ is $8$-torsion and $v_1^8$-torsion.
\end{prop}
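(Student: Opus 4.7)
The plan is to combine the chromatic fiber sequence --- which translates ``$x$ is $v_2$-periodic'' into ``$x$ dies in $\tmf^f_{E(1)}$'' --- with the explicit computation of $\pi_*\tmf_{(2)}$ encoded in Figure~\ref{fig:tmf2}.

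First I would argue that any $v_2$-periodic $x \in \pi_*\tmf$ is simultaneously $2$-power torsion and $v_1$-power torsion. The rationalization map $\tmf_{(2)} \to \tmf_\QQ$ factors through $\tmf^f_{E(1)}$ (since $H\QQ$ is $E(0)$-local, hence $E(1)^f$-local), so $v_2$-periodicity of $x$ forces $x$ to be rationally trivial and thus $2$-power torsion. Similarly, by the formulas (\ref{eq:Mnf}) and (\ref{eq:telescope}), $\tmf^f_{E(1)}$ is built out of $v_1$-self-map telescopes of $\tmf \wedge M(2^i)$, so if $x$ dies in $\tmf^f_{E(1)}$ then some lift of $x$ across $\tmf \wedge M(2^i)$ is killed by $v_1^j$ for some $j$.

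Second, I would extract universal torsion bounds from Figure~\ref{fig:tmf2} (extended by $\Delta^8$-periodicity). The chart shows that the longest $h_0$-tower attached to a $2$-power torsion class in $\pi_*\tmf_{(2)}$ has length $3$, so every such class is annihilated by $8$. It also shows that every $c_4$-power torsion class in $\pi_*\tmf_{(2)}$ is already $c_4^2$-torsion. Since the Adams class $v_1^4 \in \Ext_{A(2)_*}$ detects $c_4$ once one works over $M(2)$, the second bullet translates --- modulo Adams filtration --- into a $v_1^8$-torsion statement at the level of $\tmf \wedge M(8)$. Assembling: given $v_2$-periodic $x$, I use $8x = 0$ to lift to $\td{x} \in \pi_*(\tmf \wedge M(8))$, then apply the $v_1^8$-torsion bound to $\td{x}$ to conclude $v_1^8 \td{x} = 0$, i.e.\ $\td{x}$ lifts further to $\pi_*(\tmf \wedge M(8,v_1^8))$.

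The principal obstacle is the passage between the algebraic statement (``$c_4^2$-torsion in $\pi_*\tmf$'') and the topological statement (``$v_1^8$-torsion on $\tmf \wedge M(8)$''). Since $v_1$ is not itself a class in $\pi_*\tmf$ but only acts after smashing with a type-$1$ complex, and the identification $v_1^4 \sim c_4$ holds only modulo higher Adams filtration, one has to verify that a chosen $v_1^8$-self map of $M(8)$ realizes the required vanishing exactly. This filtration bookkeeping --- and the closely related verification that the explicit torsion bounds read off the chart are genuinely universal, not just visible in some finite range --- is the real content of the proof.
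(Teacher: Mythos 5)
Your outline is essentially the paper's argument: translate ``$v_2$-periodic'' into ``lifts to $\pi_*(\tmf \wedge M(2^i,v_1^j))$ for some $i,j$'' (equivalently, $2$-power and $v_1$-power torsion), and then read off universal bounds from the known $2$-local homotopy of $\tmf$, which is legitimate globally because $\pi_*\tmf_{(2)}$ is the charted pattern tensored with $\ZZ[\Delta^8]$. The one step you defer --- passing from ``$c_4^2$-torsion in $\pi_*\tmf$'' to ``$v_1^8$-torsion on $\tmf\wedge M(8)$,'' which you describe as only valid modulo higher Adams filtration and call the real content of the proof --- is exactly the point the paper closes with a citation rather than bookkeeping: by Rezk (the equality $v_1 = a_1$ at $p=2$ together with the Weierstrass formula for $c_4$, which gives $c_4 \equiv a_1^4 \bmod 8$), one has the \emph{exact} relation $c_4 = v_1^4$ in $\tmf_*M(8)$, so the $v_1^8$-self map acts on $\tmf\wedge M(8)$ as multiplication by $c_4^2$ on the nose and no filtration argument is required. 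With that input your assembly goes through as you describe; the remaining verification is the torsion bound itself, which the paper checks on the Adams $E_\infty$-page for $\tmf$ (using that $c_4^2$ has filtration $8$ to control hidden extensions) rather than on the homotopy chart --- note that Figure~\ref{fig:tmf2} is a homotopy-group chart, so ``$h_0$-tower'' is a slight misnomer, but the $\ZZ/2^i$ bound you want is the same statement. So: same approach, with your acknowledged obstacle resolved in the paper by the exact identity $c_4 = v_1^4$ in $\tmf_*M(8)$ rather than by filtration bookkeeping.
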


\begin{proof}
Being $v_2$-periodic is equivalent to lifting to 
$$ \pi_* \tmf \wedge M(2^i, v_1^j) $$
for some values of $i$ and $j$.
Since in $\tmf_*M(8)$ we have $c_4 = v_1^4$ (this can be seen by combining the equality $v_1 = a_1$ at $p = 2$ in the proof of Lemma~21.4 with the equation for $c_4$ in the proof of Proposition~18.7 in \cite{Rezk}), the proposition is equivalently stated as asserting that every $c_4$-torsion class in $\pi_* \tmf$ is $c_4^2$-torsion and $8$-torsion.  Using the fact that $c_4^2$ has Adams filtration $8$, this is easily checked from the Adams $E_\infty$ page for $\tmf$ (see, for instance, \cite[pp. 196-7]{tmf}).
\end{proof}

We therefore apply the strategy above to lift $v_2$-periodic elements in $\pi_*\tmf$ to the top cell of $M(8,v_1^8)$, and endeavor to lift these to homotopy classes.  The lifts described above will be performed by analyzing the \emph{modified Adams spectral sequence (MASS)} for $M(8,v_1^8)$ (see \cite[Sec.~3]{BHHM}).  

\subsection*{The modified Adams spectral sequence}

We explain how to alter the discussion of \cite[Sec.~3]{BHHM} 
to construct the \emph{modified Adams spectral sequence (MASS)} for $M(8,v_1^8)$.

The problem with the classical ASS is that the Steenrod algebra acts trivially on the cohomology $H^*(M(8, v_1^8); \FF_2)$, and thus the $E_2$ term
$$ \Ext^{*,*}_{A_*}(H_*(M(8, v_1^8);\FF_2)) $$
is just a direct sum of four copies of the $E_2$-term for the sphere, and does not recognize the non-trivial attaching maps between these cells.  This is because the degree $8$ map on the sphere has Adams filtration $3$ and the self-map $v_1^8$ has Adams filtration $8$.  The MASS corrects for this by taking these higher Adams filtrations into account.  The resulting spectral sequence will have a vanishing line of slope less than the slope  of the vanishing line for the classical ASS.
The lifts of our desired classes to $M(8,v_1^8)$ will be located near the vanishing line, which will translate to less possible targets for differentials.

Let 
$$
\xymatrix{
S \ar@{=}[r] & S_0 \ar[d] & S_1 \ar[l] \ar[d] & S_2 \ar[d]
\ar[l] & \cdots \ar[l] \\
& K_0 & K_1 & K_2 
}
$$
denote the canonical Adams resolution of the sphere,
where
\begin{align*}
S_i & = \br{H}^{\wedge i}, \\
K_i & = H \wedge S_i.
\end{align*}
Here $H$ denotes the Eilenberg-MacLane spectrum $H\FF_2$, and $\br{H}$
denotes the fiber of the unit $S \rightarrow H$.

Since the map $8: S \rightarrow S$ has Adams
filtration $3$, there exists a lift\footnote{Of course the map may be lifted to $\td{S}_3$, but this turns out to result in a less intuitive variant (e.g. we do not need to modify the ASS at all for $M(2)$, and the degree 2 map has Adams filtration 1).}:
$$
\xymatrix{
& S_2 \ar[d] \\
S \ar@{.>}[ru]^{\td{8}} \ar[r]_{8} & S
}
$$
The lift $\td{8}$ induces a map of Adams resolutions:

\begin{equation}\label{diag:aresmap}
\xymatrix{
S_0 \ar[d]_{8} &  S_0 \ar@{=}[l] \ar[d]_{(\td{8})_1} &  S_0 \ar@{=}[l]
\ar[d]_{(\td{8})_2} & S_1 \ar[l]
\ar[d]_{(\td{8})_{3}} & \cdots \ar[l] \\
S_0 &  S_1 \ar[l] &  S_2 \ar[l] &  S_{3}\ar[l] & \cdots \ar[l]
}
\end{equation}
where the maps $(\td{8})_i$ are given by
$$ (\td{8})_i : S_{i-2} = \br{H}^{\wedge i-2} \xrightarrow{1 \wedge \td{8}} \br{H}^{\wedge i-2} \wedge
\br{H}^{\wedge 2} = S_{i}.
$$
The mapping cones of the vertical maps of (\ref{diag:aresmap})
$$ S_{i-2} \xrightarrow{(\td{8})_i} S_i \rightarrow
M(8)_i $$
form a resolution of $M(8)$:
\begin{equation}\label{eq:MASSM3}
 \xymatrix{
 M(8) \ar@{=}[r] & M(8)_0 \ar[d] & M(8)_1 \ar[l] \ar[d] 
 & \ar[l] M(8)_2 \ar[d] & \ar[l] \cdots \\
 & K(8)_0 & K(8)_1 & K(8)_2 
 }
 \end{equation}
One can get an intuitive understanding of the spectra $M(8)_i$ by using mapping cones and telescopes to regard each of the $S_i's$ as a subcomplex of $S_0 \simeq S$.  For the $n$-disk $D^n$, we define a corresponding sequence of subcomplexes of $S_0 \wedge D^n$
$$ D^n_i := \begin{cases}
S_i \wedge D^n, & i \ge 0, \\
S_0 \wedge D^n, & i < 0.
\end{cases}
$$
In this language we may regard $M(8)_i$ as the CW-spectrum
$$ M(8)_i = S_i \cup_{\td{8}} D^1_{i-2}. $$

The \emph{modified Adams spectral sequence} (MASS) for $M(8)$ is the spectral sequence resulting from applying $\pi_*$ to the resolution (\ref{eq:MASSM3}):
$$ E_1^{s,t}(M(8)) = \pi_{t-s}(K(8)_s) \Rightarrow \pi_{t-s}M(8).
$$
We now describe the $E_2$-term of this MASS.
Let $\mc{D}_{A_*}$
denote the derived category of $A_*$-comodules.  
For objects $M$ and $N$
of $\mc{D}_{A_*}$, we define groups
$$ \Ext^{s,t}_{\Gamma}(M,N) = \mc{D}_{\Gamma}(\Sigma^t M, N[s]) $$
as a group of maps in the derived category.  Here $\Sigma^t M$ denotes the
$t$-fold shift with respect to the internal grading of $M$, and $N[s]$ denotes
the $s$-fold shift with respect to the triangulated structure of
$\mc{D}_{A_*}$.  This
reduces to the usual definition of $\Ext_{A_*}$ when $M$ and $N$ are
$A_*$-comodules.  

Define $H(8)$ to be the cofiber of the map
\begin{equation}\label{eq:H3}
 \Sigma^3 \FF_2[-3] \xrightarrow{h_0^3} \FF_2
 \end{equation}
in the derived category of $A_*$-comodules.
Using precisely the same arguments as in \cite[Sec.~3]{BHHM}, we find the $E_2$-term of the MASS takes the form
$$
E_2^{s,t}(M(8)) = \Ext^{s,t}_{A_*}(H(8)) 
\Rightarrow
\pi_{t-s}(M(8)),
$$

We now endeavor to give a similarly modified Adams resolution of
$$ M(8,v_1^8) = S^0 \cup_{8} D^1 \cup_{\{h_4h_0^5\}} D^{17} \cup_{8} D^{18} $$
We use the fact that $\{h_4h_0^5\}$ has Adams filtration $6$ to make the lift 
$$
\xymatrix{
& S_{5}^{0} \ar[d] \\
S^{15} \ar@{.>}[ru]^{\td{\{h_4h_0^5\}}} \ar[r]_{\{h_4h_0^5\}} & S^0
}
$$
Because $h_4h_0^8 = 0$ in the $E_2$-page of the Adams spectral sequence, the composite $ (\td{8})_7 \td{\{h_4h_0^5\}}$ is null, and we can therefore make the lift
$$ 
\xymatrix{
& S_0^{16} \ar[d]^{\{h_4h_0^5\}} \ar@{.>}[dl] \\
M(8)_7 \ar[r] & S^1_5 \ar[r]_{(\td{8})_7} & S^1_7.
}
$$
We deduce the existence of the CW-spectrum
$$ M(8)_7 \cup_{\td{\{h_4h_0^5\}}} D_0^{17}. $$ 
Finally, we use the fact that the class $h_4h_0^8[1]$ is zero in the $E_2$-term of the MASS for $M(8)$ to deduce the existence of a lift
$$ 
\xymatrix{
& S_0^{17} \ar[d]^{\td{8}} \ar@{.>}[dl] \\
M(8)_9 \cup_{(\td{h_4h_0^5})_2} D_2^{17} \ar[r] & S^{17}_2 \ar[r]_{(\td{h_4h_0^5})_2[1]} & \Sigma M(8)_9.
}
$$
and hence the existence of the CW spectra
$$ M(8,v_1^8)_i := S^0_i \cup_{\td{8}} D^1_{i-2} \cup_{\td{h_4h_0^5}} D^{17}_{i-7} \cup_{\td{8}} D^{19}_{i-9}. $$

These give a resolution:
$$
\xymatrix{
M(8,v_1^8) \ar@{=}[r] & M(8,v_1^8)_0 \ar[d] & M(8,v_1^8)_1 \ar[l] \ar[d] 
& \ar[l] M(8,v_1^8)_2 \ar[d] & \ar[l] \cdots \\
& K(8,v_1^8)_0 & K(8,v_1^8)_1 & K(8,v_1^8)_2 
}
$$
Smashing this resolution with $X$, we obtain the \emph{modified Adams spectral sequence for $M(8,v_1^8) \wedge X$}:
\begin{equation}\label{eq:MASS1}
E_1^{s,t}(M(8,v_1^8) \wedge X) = \pi_{t-s}(K(8,v_1^8)_s \wedge X) \Rightarrow \pi_{t-s}(M(8,v_1^8)
\wedge X).
\end{equation}
Define $H(8,v_1^8) \in \mc{D}_{A_*}$ to be the cofiber
\begin{equation}\label{eq:H38}
\Sigma^{24}H(8)[-8] \xrightarrow{v_1^8} H(8) \rightarrow H(8,v_1^8).
\end{equation}
Again using the methods of \cite[Sec.~3]{BHHM},
we deduce that the $E_2$-term of the spectral sequence (\ref{eq:MASS1}) is
given by
$$ E_2^{s,t}(M(8,v_1^8) \wedge X) = \Ext^{s,t}_{A_*}(H(8,v_1^8)\otimes H_*X). $$
The $E_2$-term of this spectral sequence is readily computed by using the pair of long exact sequences coming from the triangles (\ref{eq:H3}) and (\ref{eq:H38}).
Taking $X = \tmf \wedge Y$ for some $Y$, and applying a change of rings theorem, the MASS takes the form
$$ E_2^{s,t}(\tmf \wedge M(8,v_1^8) \wedge Y) = \Ext^{s,t}_{A(2)_*}(H(8,v_1^8) \otimes H_* Y) \Rightarrow \tmf_{t-s}(M(8,v_1^8) \wedge Y). $$

\begin{figure}
\centering
\includegraphics[height=.5\textheight]{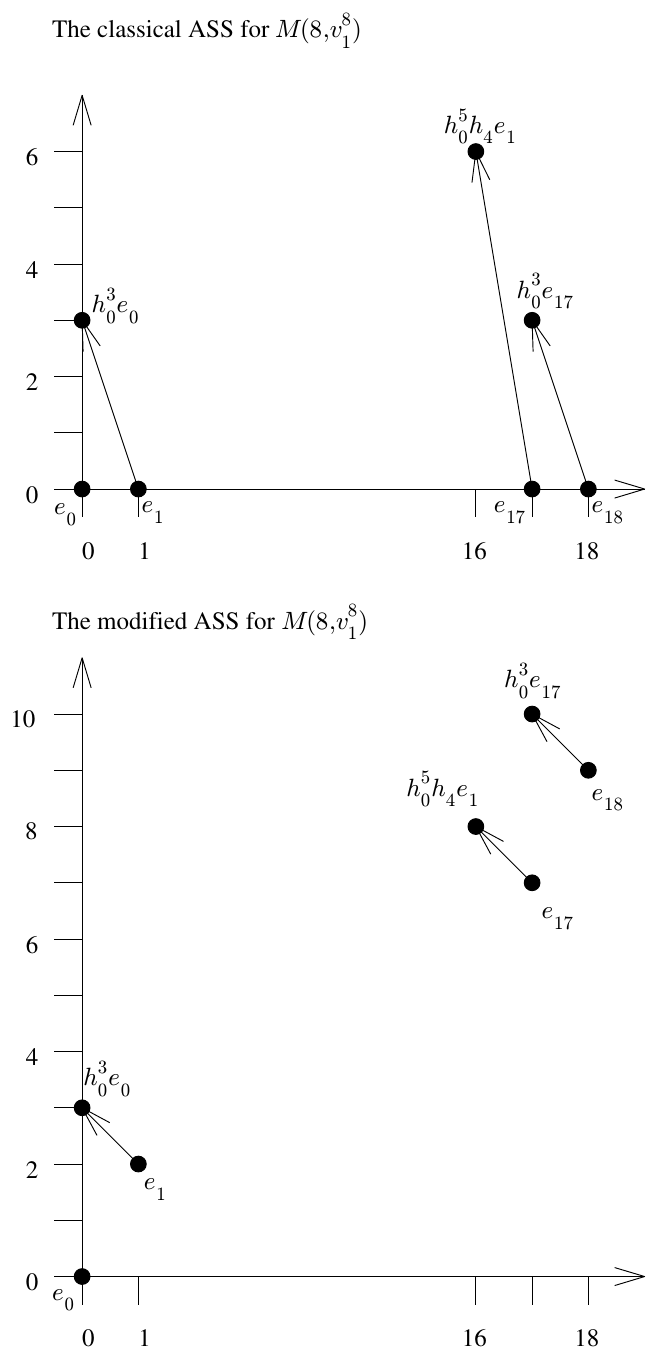}
\caption{Comparison of the ASS and the MASS for $M(8,v_1^8)$.}
\label{fig:ASSvsMASS}
\end{figure}

Figure~\ref{fig:ASSvsMASS} illustrates the difference between the ASS and the MASS for $M(8,v_1^8)$.  The classes $e_i$ correspond to the different cells in $M(8,v_1^8)$.  In the ASS, the differentials on the classes $e_i$ induced from the attaching maps in $M(8,v_1^8)$ are long differentials, wheras in the MASS the same differentials are $d_1$-differentials.  Thus, the attaching maps in $M(8,v_1^8)$ result in a much smaller $E_2$-page.

\subsection*{The algebraic $\tmf$-resolution}

The $E_2$-page of the MASS for $M(8,v_1^8)$ will be analyzed using an algebraic analog of the $\tmf$-resolution (as in \cite[Sec.~5]{BHHM}).  

The (topological) $\tmf$-resolution of a space $X$ is the Adams resolution based on the spectrum $\tmf$:
$$
\scriptsize{
\xymatrix@C-1em{
X\ar[d] &
\ar[l] \br{\tmf} \wedge X \ar[d] &
\ar[l] \br{\tmf}^{\wedge 2} \wedge X \ar[d] &
\cdots \ar[l]
\\
\tmf \wedge X & 
\tmf \wedge \br{\tmf} \wedge X & 
\tmf \wedge \br{\tmf}^{\wedge 2} \wedge X & 
}}
$$
Here, $\br{\tmf}$ is the fiber of the unit:
$$ \br{\tmf} \rightarrow S \rightarrow \tmf. $$
The resulting spectral sequence takes the form
$$ E_1^{s,t} = \pi_{t-s} \tmf \wedge \br{\tmf}^{s} \wedge X \Rightarrow \pi_{t-s} X. $$

The \emph{algebraic} $\tmf$-resolution is an algebraic analog.
Namely, for any object $M$ of the derived category of $A_*$-comodules, we apply $\Ext_{A_*}(-)$ to the following diagram in the derived category:
$$
\scriptsize{
\xymatrix@C-1em{
M\ar[d] &
\ar[l] \br{A\mmod A(2)}_*[-1] \otimes M \ar[d] &
\ar[l] \br{A\mmod A(2)}^{\otimes 2}_*[-2] \otimes M \ar[d] &
\cdots \ar[l]
\\
A\mmod A(2)_* \otimes M & 
A \mmod A(2)_* \otimes \br{A\mmod A(2)}_*[-1] \otimes M & 
A \mmod A(2)_* \otimes \br{A\mmod A(2)}^{\otimes 2}_*[-2] \otimes M & 
}}
$$
Here $\br{A\mmod A(2)}_*$ is the cokernel of the unit
$$ 0 \rightarrow \FF_2 \rightarrow A\mmod A(2)_* \rightarrow \br{A \mmod A(2)}_* \rightarrow 0 $$
(note that $H_*\Sigma \br{\tmf} = \br{A\mmod A(2)}_*$).
This results in the \emph{algebraic $\tmf$-resolution} 
$$ E_1^{s,t,n} = \Ext^{s,t}_{A(2)_*}(\br{A\mmod A(2)}_*^{\otimes n} \otimes M) \Rightarrow \Ext^{s+n,t}_{A_*}(M). $$

\begin{rmk}
The algebraic $\tmf$-resolution is related to the topological $\tmf$-resolution by Adams spectral sequences:
$$ \Ext^{s,t}_{A(2)_*}(\br{A\mmod A(2)}_*^{\otimes n} \otimes H_*X) \Rightarrow \pi_{t-s-n}\tmf \wedge \br{\tmf}^{\wedge n} \wedge X. $$ 
\end{rmk}

\subsection*{The implementation of the strategy}

We now return to the strategy outlined at the beginning of this section, and explain how it is implemented in this paper using the MASS and algebraic $\tmf$-resolution.  Given an element $x \in \tmf_*$, we want to lift it to an element $y \in \pi_*^s$.  To this end, we consider the diagram of (M)ASS's:
$$
\xymatrix{
& \Ext_{A(2)_*}(H(8,v_1^8)) \ar@{=>}[rr] \ar[dd]
&& \tmf_*M(8,v_1^8) \ar[dd] \\
\Ext_{A_*}(H(8,v_1^8)) \ar@{=>}[rr] \ar[ru] \ar[dd]  
&& \pi_* M(8,v_1^8) \ar[ru] \ar[dd] & \\
& \Ext_{A(2)_*}(\FF_2) \ar@{=>}[rr] 
&& \tmf_* \\
\Ext_{A_*}(\FF_2) \ar@{=>}[rr] \ar[ru]
&& \pi_*^s \ar[ru]
}
$$ 
First, we identify an element
$$ [x] \in \Ext_{A(2)_*}(\FF_2) $$
which detects the element $x$ in the ASS for $\tmf_*$, and then we identify an element 
$$ [\td{x}] \in \Ext_{A(2)_*}(H(8,v_1^8)) $$
which maps to it.  This element $[\td{x}]$ can be regarded as an element of the zero line of the algebraic $\tmf$-resolution for $\Ext_{A_*}(H(8,v_1^8))$.  We will show (Proposition~\ref{prop:tmfresPC}) that the element $[\td{x}]$ is a permanent cycle in the algebraic $\tmf$-resolution, and thus lifts to an element
$$ [\td{y}] \in \Ext_{A_*}(H(8,v_1^8)). $$
We will then show (Theorem~\ref{thm:MASSPCs}) that the element $[\td{y}]$ is a permanent cycle in the MASS for $M(8,v_1^8)$, and hence detects an element
$$ y \in \pi_*M(8,v_1^8). $$
It then follows that the image of $y$ in $\tmf_*$ equals $x$, modulo terms of higher Adams filtration.

\section{Useful facts about $M(8,v_1^8)$}\label{sec:M38}

This section collects some facts about $M(8,v_1^8)$ which will prove useful.  For us, a \emph{weak homotopy ring spectrum} is a spectrum $R$ with a unit 
$$ S \xrightarrow{u} R $$
and a multiplication map
$$ R\wedge R \xrightarrow{\mu} R $$
so that the two composites
\begin{gather*}
R \wedge S \xrightarrow{u \wedge 1} R \wedge R \xrightarrow{\mu} R, \\
S \wedge R \xrightarrow{1 \wedge u} R \wedge R \xrightarrow{\mu} R
\end{gather*}
are equivalences.

\begin{prop}
$M(8,v_1^8)$ is a weak homotopy ring spectrum.
\end{prop}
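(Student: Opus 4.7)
The plan is to descend the weak ring structure on $M(8)$ to $M(8,v_1^8)$ via the cofiber sequence defining the latter. First, $M(8)$ itself is a weak homotopy ring spectrum, by a classical argument of Oka. The unit is the inclusion of the bottom cell $u_8\colon S \to M(8)$, and a multiplication $\mu_8\colon M(8) \wedge M(8) \to M(8)$ is produced by smashing the defining cofiber sequence $S \xrightarrow{8} S \to M(8) \to \Sigma S$ with $M(8)$. Since $\pi_0 M(8) = \ZZ/8$, the composite $8 \cdot \mathrm{id}_{M(8)}$ is null, and a choice of null-homotopy produces a splitting of the resulting cofiber sequence, hence $\mu_8$.

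For $M(8,v_1^8)$, take the unit to be $u = q \circ u_8 \colon S \to M(8,v_1^8)$, where $q \colon M(8) \to M(8,v_1^8)$ is the projection to the cofiber of the self-map $v_1^8 \colon \Sigma^{16} M(8) \to M(8)$. To construct the multiplication, one seeks to factor $q \circ \mu_8 \colon M(8) \wedge M(8) \to M(8,v_1^8)$ through the projection $q \wedge q$. Examining the double cofiber sequence whose homotopy cofiber is $M(8,v_1^8) \wedge M(8,v_1^8)$, this factorization exists precisely when the two obstruction classes
\[
q \circ \mu_8 \circ (v_1^8 \wedge \mathrm{id}) \in [\Sigma^{16} M(8) \wedge M(8), M(8,v_1^8)], \qquad q \circ \mu_8 \circ (\mathrm{id} \wedge v_1^8) \in [M(8) \wedge \Sigma^{16} M(8), M(8,v_1^8)]
\]
both vanish.

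The main obstacle is arranging these obstructions to vanish. One approach exploits the flexibility afforded by the Hopkins-Smith periodicity theorem: any two $v_1^8$-self maps differ by elements of smaller $E(1)$-filtration (equivalently, by nilpotent terms), and one can adjust the choice of $v_1^8$ so that the obstruction classes cancel. Equivalently, one verifies directly, via the MASS calculation already used in the paper for $\tmf\wedge M(8,v_1^8)$, that the relevant group $[\Sigma^{16}M(8)\wedge M(8), M(8,v_1^8)]$ is small enough in the appropriate bidegree to permit a self-map whose associated obstructions vanish. Once the factored multiplication $\mu \colon M(8,v_1^8) \wedge M(8,v_1^8) \to M(8,v_1^8)$ is produced, the weak unit axioms $\mu \circ (u \wedge \mathrm{id}) = \mathrm{id}$ and $\mu \circ (\mathrm{id} \wedge u) = \mathrm{id}$ follow from those already established for $\mu_8$ by composing with $q$.
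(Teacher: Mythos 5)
Your overall strategy (descend a multiplication from $M(8)$ by factoring $q\circ\mu_8$ through $q\wedge q$) could in principle work, but the proof has a genuine gap exactly at the point where all the content of the proposition lies: you never show that the obstruction classes $q\circ\mu_8\circ(v_1^8\wedge\mathrm{id})$ and $q\circ\mu_8\circ(\mathrm{id}\wedge v_1^8)$ vanish. Your first suggested mechanism misquotes Hopkins--Smith: the uniqueness statement is only asymptotic (sufficiently large powers of any two $v_1$-self maps of $M(8)$ agree), not that any two $v_1^8$-self maps differ by nilpotent terms, and there is no argument that varying the self map moves the obstruction to zero; worse, changing the self map changes the spectrum $M(8,v_1^8)$ itself (and the cell structure used throughout the paper), so this is not a legitimate degree of freedom here. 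Your second mechanism defers to ``the MASS calculation already used in the paper for $\tmf\wedge M(8,v_1^8)$,'' but that spectral sequence has $E_2=\Ext_{A(2)_*}$ and converges to $\tmf$-homology; it does not compute $[\Sigma^{16}M(8)\wedge M(8),\,M(8,v_1^8)]$. To control such groups one needs a MASS of the form $\Ext_{A_*}(H(8)^{\otimes 2}\ (\text{or } H(8,v_1^8)^{\otimes 2}),\,H(8,v_1^8))\Rightarrow[\,\cdot\,,M(8,v_1^8)]$ together with an algebraic Atiyah--Hirzebruch analysis over the cells of the source, and then an explicit check that the groups in the relevant bidegrees vanish. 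This is precisely what the paper does (it extends the unit $S^0\to M(8,v_1^8)$ directly over $M(8,v_1^8)^{\wedge 2}$, runs the algebraic Atiyah--Hirzebruch spectral sequence over the nine cells of $H(8,v_1^8)^{\otimes 2}$, and verifies the vanishing from the $\Ext_{A(2)_*}(H(8,v_1^8))$ chart); without some such computation your ``small enough to permit a self-map whose associated obstructions vanish'' is an assertion, not a proof. Note also that even if both primary obstructions vanish, factoring through the double cofiber $M(8,v_1^8)\wedge M(8,v_1^8)$ is a two-stage extension with its own choices, which should be addressed.

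Two smaller points. The claim that $8\cdot\mathrm{id}_{M(8)}$ is null ``since $\pi_0M(8)=\ZZ/8$'' is a non sequitur: $\pi_0$ only controls $8$ times the unit, not $8$ times the identity (compare $S/2$, where $2\cdot\mathrm{id}\neq 0$); the statement is true for $M(8)$, but it requires Oka's actual argument, not this one. Finally, the unit axioms do not follow formally ``by composing with $q$'': from $\mu\circ(u\wedge\mathrm{id})\circ q=q$ you still need to conclude that $\mu\circ(u\wedge\mathrm{id})$ is an equivalence, which requires an input such as the paper's observation that $BP_*M(8,v_1^8)$ is generated as a $BP_*$-module by the bottom-cell class, so that a self-map fixing that class induces an isomorphism on $BP_*$.
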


\begin{proof}
We duplicate the argument given in \cite{Mahowaldring} in the case of $M(8,v_1^8)$.  A similar argument in fact shows that $M(8,v_1^{4t})$ is a ring spectrum for all $t > 0$.  The unit
$$ u: S^0 \rightarrow M(8,v_1^8) $$
is the inclusion of the bottom cell.
We will show there exists an extension
\begin{equation}\label{eq:extension}
\xymatrix{
S^0 \ar[r] \ar[d] & M(8,v_1^8) \\
M(8,v_1^8) \wedge M(8,v_1^8) \ar@{.>}[ru]_{\mu}
}
\end{equation}
From this it follows that the composites
\begin{gather*}
M(8,v_1^8) \wedge S \xrightarrow{u \wedge 1} M(8,v_1^8) \wedge M(8,v_1^8) \xrightarrow{\mu} M(8,v_1^8), \\
S \wedge M(8,v_1^8) \xrightarrow{1 \wedge u} M(8,v_1^8) \wedge M(8,v_1^8) \xrightarrow{\mu} M(8,v_1^8)
\end{gather*}
are equivalences, since these composites are the identity on the bottom cell, and $BP_*M(8,v_1^8)$ is generated as a $BP_*$-module on the generator coming from the bottom cell.

An elaboration of the technique used to construct the MASS for $M(8,v_1^8)$ gives a MASS
\begin{equation}\label{eq:MASSh382}
 \Ext^{s,t}_{A_*}(H(8,v_1^8)^{\otimes 2}, H(8,v_1^8)) \Rightarrow [\Sigma^{t-s} M(8,v_1^8)^{\wedge 2}, M(8,v_1^8)].
 \end{equation}
We shall say a cell $\Sigma^t\FF_2[-s]$ in the derived category of $A_*$-comodules has bidgeree $(t-s,s)$.
In the derived category of $A_*$-comodules, $H(8,v_1^8)^{\otimes 2}$ has cells in the bidegrees
\begin{equation}\label{eq:bideglist}
 (0,0), \: (1,2), \: (2,4), \: (17,7), \:(18,9), \:(19,11), \:(34,14), \:(35,16), \:(36,18).
\end{equation}
Applying $\Ext^{*,*}_{A_*}(-,H(8,v_1^8))$ to the cellular filtration of $H(8,v_1^8)^{\otimes 2}$ in $\mc{D}_{A_*}$, we get an \emph{algebraic Atiyah-Hirzebruch spectral sequence} 
$$ \bigoplus_{(t_0-s_0, s_0)} \Ext^{s-s_0,t-t_0}_{A_*}(H(8,v_1^8)) \Rightarrow \Ext^{s,t}_{A_*}(H(8,v_1^8)^{\otimes 2}, H(8,v_1^8)) $$
where $(t_0-s_0, s_0)$ ranges through the list (\ref{eq:bideglist}). The extension problem (\ref{eq:extension}) is equivalent to showing that the class
$$ 1[0,0] \in \Ext^{0,0}_{A_*}(H(8,v_1^8)) $$
supported on the bottom cell of $H(8,v_1^8)^{\otimes 2}$
is a permanent cycle in both the algebraic Atiyah-Hirzebruch spectral sequence, and the MASS.  The possible targets of a differential (in either spectral sequence) supported by this class are detected in the algebraic Atiyah-Hirzebruch spectral sequence by classes in 
$$ \Ext^{s_0+r, (k_0 - 1) + (s_0 + r)}_{A_*}(H(8,v_1^8)) $$
for $r \ge 1$ and $(k_0, s_0)$ in the list (\ref{eq:bideglist}).  One can check these groups are all zero. This check can be performed by hand, or alternatively, in this range the groups are isomorphic to
$$ \Ext^{s_0+r, (k_0 - 1) + (s_0 + r)}_{A(2)_*}(H(8,v_1^8)) $$
and these latter groups are displayed in Figure~\ref{fig:ExtH38}.
\end{proof}

We will make use of the following corollary, which implies that the differentials satisfy a Liebnetz rule, and that products can be computed up to higher filtration in these spectral sequences.

\begin{cor}
Both the algebraic $\tmf$-resolution and the MASS for $M(8,v_1^8)$ are spectral sequences of (possibly non-associative) algebras.
\end{cor}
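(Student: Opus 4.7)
The plan is to use the weak ring structure $\mu$ on $M(8,v_1^8)$ from the previous proposition to endow the filtrations underlying both spectral sequences with compatible pairings, and then invoke the standard machinery (following Moss, or Bruner-May-McClure-Steinberger) converting such pairings into multiplicative structures on the associated spectral sequences, with the Leibniz rule on differentials. The non-associativity in the conclusion simply reflects that $\mu$ itself is not known to be associative.

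For the MASS, I would construct by induction on $n = i+j$ a system of pairings
\begin{equation*}
\mu_{i,j}: M(8,v_1^8)_i \wedge M(8,v_1^8)_j \to M(8,v_1^8)_{i+j}
\end{equation*}
compatible with the filtration maps and extending $\mu_{0,0} = \mu$. The inductive step is an obstruction problem whose obstructions lie in groups of the form $[M(8,v_1^8)_i \wedge M(8,v_1^8)_j, K(8,v_1^8)_{n-1}]$, computable via MASS's of the type appearing in (\ref{eq:MASSh382}). The argument here is essentially the same cellular analysis of $H(8,v_1^8)^{\otimes 2}$ and vanishing check via Figure~\ref{fig:ExtH38} that resolved the extension problem (\ref{eq:extension}) in the previous proposition; since in each case we are computing groups one filtration below an obstruction group that was already shown to vanish, the needed vanishing should follow by essentially the same inspection.

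For the algebraic $\tmf$-resolution, the argument runs in parallel within $\mc{D}_{A_*}$. Since $\tmf$ is a ring spectrum, $A \mmod A(2)_*$ is a comodule algebra, so the canonical tensor-power resolution defining the algebraic $\tmf$-resolution carries a natural algebra structure. Paired with the multiplication on $H(8,v_1^8)$ inherited from $\mu$ (via the map the previous proposition already produced), this yields compatible pairings throughout the resolution of $H(8,v_1^8)$. Once pairings on both filtrations are in hand, the Leibniz rule $d_r(xy) = d_r(x) y \pm x d_r(y)$ and the multiplicative structure on each $E_r$-page are formal consequences of the standard construction. The main technical hurdle is the obstruction analysis for the MASS pairings, but this directly mirrors the argument already carried out in the proof of the previous proposition, so the corollary is essentially a byproduct of that proof rather than requiring substantial new work.
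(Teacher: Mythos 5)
The paper gives no free-standing proof of this corollary: it is meant to follow from the preceding proposition by exactly the route you describe (pairings of the modified Adams tower with itself and of the algebraic $\tmf$-resolution with itself, then the standard Moss/Bruner machinery, with non-associativity inherited from $\mu$), so your overall strategy is the intended one, and your treatment of the algebraic $\tmf$-resolution via the comodule-algebra structure of $A\mmod A(2)_*$ together with the multiplication on $H(8,v_1^8)$ in $\mc{D}_{A_*}$ produced in the proposition's proof is fine.

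The one place your sketch overstates is the obstruction bookkeeping for the tower pairings $\mu_{i,j}$. The groups $[M(8,v_1^8)_i \wedge M(8,v_1^8)_j, K(8,v_1^8)_{n-1}]$ do not vanish: each $K(8,v_1^8)_{n-1}$ is a (generalized) mod $2$ Eilenberg--MacLane spectrum, so these are large sums of mod $2$ cohomology groups of $M(8,v_1^8)^{\wedge 2}$, and no inspection ``one filtration below'' the proposition's computation will make them zero. What you actually need is that the specific obstruction cocycles vanish, and the clean way to get this is to use that the proposition's proof establishes more than the bare existence of $\mu$: it shows $1[0,0]$ is a permanent cycle both in the algebraic Atiyah--Hirzebruch spectral sequence over the cells (\ref{eq:bideglist}) and in the MASS (\ref{eq:MASSh382}), which is precisely the statement that $\mu$, and its algebraic shadow $H(8,v_1^8)^{\otimes 2} \to H(8,v_1^8)$ in $\mc{D}_{A_*}$, can be realized compatibly with the filtrations; the vanishing regions of Figure~\ref{fig:ExtH38} invoked there are what control the extensions of the pairing over the shifted cells $D^1_{i-2}$, $D^{17}_{i-7}$, $D^{19}_{i-9}$ against the canonical pairings $S_i \wedge S_j \simeq S_{i+j}$. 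With that correction (vanishing of the detected obstruction classes rather than of whole mapping groups), your argument goes through and is essentially the paper's.
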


\begin{lem}\label{lem:v2^8}
The element
$$ v_2^8 \in \Ext^{8,48+8}_{A(2)_*}(H(8,v_1^8)) $$
is a permanent cycle in the algebraic $\tmf$-resolution, and gives rise to an element
$$ v_2^8 \in \Ext^{8,48+8}_{A_*}(H(8,v_1^8)). $$
\end{lem}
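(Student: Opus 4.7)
My plan is to establish this by showing that $v_2^8$ supports no nontrivial differentials in the algebraic $\tmf$-resolution, via vanishing of the possible targets.

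First, I would locate $v_2^8$ on the $n = 0$ line. The element $v_2^8 \in \Ext^{8, 56}_{A(2)_*}(\FF_2)$ lifts to $\Ext^{8, 56}_{A(2)_*}(H(8, v_1^8))$ using the pair of long exact sequences in $\Ext_{A(2)_*}$ coming from the defining cofibers (\ref{eq:H3}) and (\ref{eq:H38}): the obstructions to lifting are classes of the form $h_0^3 \cdot v_2^8$ and $v_1^8 \cdot v_2^8$, which vanish in $\Ext_{A(2)_*}$ in the relevant bidegrees (this is a standard small check for $A(2)$ at $p=2$, visible in the usual chart of $\Ext_{A(2)_*}(\FF_2)$). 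This places $v_2^8$ in the tridegree $(s, t, n) = (8, 56, 0)$ of the algebraic $\tmf$-resolution.

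Second, I would enumerate the potential targets of differentials. With the usual indexing $d_r : E_r^{s, t, n} \to E_r^{s-r+1, t, n+r}$, the target of $d_r(v_2^8)$ lies in
\begin{equation*}
\Ext^{9-r,\, 56}_{A(2)_*}\!\bigl(\br{A\mmod A(2)}_*^{\otimes r}\otimes H(8, v_1^8)\bigr),
\end{equation*}
which vanishes for $r \geq 10$ by filtration. For each $r$ with $1 \leq r \leq 9$, I would verify vanishing directly, using the description of $\br{A\mmod A(2)}_*$ in terms of $bo$-Brown-Gitler modules from Section~\ref{sec:boi}. For each fixed $r$ only finitely many Brown-Gitler summands contribute below the relevant internal degree, and their $\Ext_{A(2)_*}$ groups are computable; the long exact sequences from (\ref{eq:H3}) and (\ref{eq:H38}) then reduce the analysis on $H(8, v_1^8)$-coefficients to four shifted checks on the Brown-Gitler pieces themselves.

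The main obstacle is clearly the combinatorial bookkeeping in the second step, especially for the larger values of $r$ where tensor powers of $\br{A\mmod A(2)}_*$ break into many Brown-Gitler summands. A cleaner alternative, which I would pursue in parallel and which uses the weak homotopy ring structure on $M(8, v_1^8)$ just established in Section~\ref{sec:M38}, is to invoke Hopkins-Smith periodicity to produce a $v_2^N$-self-map of $M(8, v_1^8)$ for some $N \mid 8$. Its Adams detecting class is an element $\ul{v_2^N} \in \Ext^{N, 7N}_{A_*}(H(8, v_1^8))$ whose image in $\Ext_{A(2)_*}$ is $v_2^N$, and raising $\ul{v_2^N}$ to the $(8/N)$-th power using the ring structure yields the desired lift of $v_2^8$ in one step, bypassing the differential analysis entirely. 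Either route produces the required permanent cycle and hence the element $v_2^8 \in \Ext^{8, 56}_{A_*}(H(8, v_1^8))$.
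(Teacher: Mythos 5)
There is a genuine gap in both of your routes. For the first route, the vanishing you hope for at $d_1$ is not there to be had for degree or Brown--Gitler bookkeeping reasons: the class $v_2^8$ actually supports a nonzero $d_1$ over the sphere, $d_1^{\FF_2}(v_2^8) = v_1^8\xib_1^{32} + h_0^8\xib_2^{16}$ in $\Ext^{8,56}_{A(2)_*}(\br{A\mmod A(2)}_*)$, and identifying this value is the hard part of the paper's proof (it is done by computing $\Delta(q)^2-\Delta(\bar q)^2$ in the ring of $2$-variable modular forms of \cite{BOSS}, mod $512$). The reason $d_1^{H(8,v_1^8)}(v_2^8)=0$ is that this \emph{particular element} dies after passing to $H(8)$ and then $H(8,v_1^8)$ coefficients (the $h_0^8$ term dies over $H(8)$, the $v_1^8$-multiple dies over $H(8,v_1^8)$), not that the target bidegree is empty; so ``verify vanishing directly'' cannot replace this computation, and you offer no substitute. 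Moreover, even granting $d_1=0$, you have no control over $d_r$ for $r\ge 2$ beyond the hope that targets vanish. The paper sidesteps all higher differentials by a different idea entirely: motivated by the May differential $d_8(b_{3,0}^4)=b_{2,0}^4h_5$, it constructs an explicit class $h_5[17]\in\Ext^{8,56}_{A_*}(H(8,v_1^8))$ in the abutment, as a lift of $h_5[0]$ across the top-cell boundary (which needs $v_1^8\cdot h_5[0]=0$, proved using Bruner's computation $h_4B_1\ne 0$), and then uses the geometric boundary theorem to show this class is detected by $v_2^8$ in the algebraic $\tmf$-resolution. That link between $v_2^8$ and $h_5$ is the missing ingredient in your plan. (A side remark: your claim that the lifting obstructions ``$h_0^3\cdot v_2^8$ and $v_1^8\cdot v_2^8$ vanish in $\Ext_{A(2)_*}$'' is false --- $\Delta^2$ carries an $h_0$-tower and is $c_4^2$-torsion-free; what you actually need, and what is true, is that $v_2^8$ is not $h_0^3$-divisible and its image is not $v_1^8$-divisible, so it maps to a nonzero class of $\Ext_{A(2)_*}(H(8,v_1^8))$, as in Figure~\ref{fig:ExtH38}.)

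Your second route fails at its first step: the Hopkins--Smith Periodicity Theorem produces a $v_2^N$-self-map of $M(8,v_1^8)$ for \emph{some} $N$, with no control whatsoever that $N$ divides $8$. In practice such exponents are large --- the self-maps known for complexes of this sort (e.g.\ the $v_2^{32}$-self-map of $M(2,v_1^4)$ of \cite{BHHM}) have exponent $32$ or more, and no $v_2^8$-self-map of $M(8,v_1^8)$ is known. So at best this route would produce $v_2^{32}$ or a higher power in $\Ext_{A_*}(H(8,v_1^8))$, which is not what the lemma asserts and not enough for its later use (the proof of Proposition~\ref{prop:tmfresPC} exploits $v_2^{8}$-divisibility of specific classes). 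There are also secondary issues you would need to address even if such a self-map existed: $M(8,v_1^8)$ is only a weak, possibly non-associative ring, and one must justify that the self-map is detected in Adams filtration exactly $N$ by a class whose image in $\Ext_{A(2)_*}$ is $v_2^N$ rather than something of higher filtration. As written, neither route reaches the conclusion.
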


\begin{proof}
By way of motivation, we point out that in the May spectral sequence for $\Ext_{A_*}(\FF_2)$ there is a differential
$$ d_8(b_{3,0}^4) = b_{2,0}^4h_5. $$
We therefore endeavor to construct $v_2^8$ as a lift of
$$ h_5[0] \in \Ext^{1,31+1}_{A_*}(H(8)) $$
in the long exact sequence
$$ \Ext^{8,48+8}_{A_*}(H(8,v_1^8)) \xrightarrow{\partial} \Ext^{1,31+1}_{A_*}(H(8)) \xrightarrow{v_1^8} \Ext^{9,47+9}_{A_*}(H(8)). $$
Here and throughout this proof, we use $x[0]$ (respectively $x[1]$) to denote a class in $\Ext_{A_*}(H(8))$ corresponding to $x \in \Ext_{A_*}(\FF_2)$ supported on the 0-cell (respectively 1-cell) of $H(8)$.
We must show that 
$$ v_1^8 \cdot h_5[0] = 0 \in \Ext^{9,47+9}(H(8)). $$
The only other possibility is that $v_1^8 \cdot h_5[0] = B_1[1]$.  If that were the case, then the image of $h_5$ under the composite
$$ \Ext^{1,31+1}_{A_*}(\FF_2) \rightarrow \Ext^{1,31+1}_{A_*}(H(8)) \xrightarrow{v_1^8} \Ext^{9,47+9}_{A_*}(H(8)) \rightarrow \Ext^{7, 46+7}_{A_*}(\FF_2) $$
would be $B_1$.   Since $h_4 h_5 = 0$, this would imply that $h_4B_1 = 0$.  However, according to Bruner's computer calculations, $h_4B_1 \ne 0$ \cite{BrunerExt}.  We deduce that $v_1^8 \cdot h_5[0]$ is indeed zero.

Let 
$$ h_5[17] \in \Ext^{8,48+8}_{A_*}(H(8,v_1^8)) $$
be the resulting lift of $h_5[0]$.
It remains to show is that $h_5[17]$ is detected by 
$$ v_2^8 \in \Ext^{8,48+8}_{A(2)_*}(H(8,v_1^8))$$ 
in the algebraic $\tmf$-resolution.  Consider the fiber
$$ F \rightarrow A \mmod A(2)_* \rightarrow A \mmod A(2) \otimes \br{A \mmod A(2)}_* $$
in the derived category of $A_*$-comodules.
The associated long exact sequence of Ext groups is the first two lines of the algebraic tmf-resolution:
$$ \cdots \rightarrow \Ext_{A_*}(F \otimes M) \rightarrow \Ext_{A(2)_*}(M) \xrightarrow{d^M_1} \Ext_{A(2)_*}(\br{A \mmod A(2)}_*\otimes M) \rightarrow \cdots. $$

We first wish to show that
$$ d_1^{H(8)}(v_2^8) = v_1^8 \xib_1^{32} $$
(here we are regarding $\xib_1^{32}$ as an element of $\br{A\mmod A(2)}_*$).  Using the fact that $v_2^8$ corresponds to the modular form $\Delta^2$, in the language of \cite[Sec.~6]{BOSS}, $d_1(\Delta^2)$ corresponds to the 2-variable modular form:
$$ d_1(\Delta^2) = \Delta(q)^2 - \Delta(\bar{q})^2. $$
Using a computer, in terms of the generators for the ring of 2-variable modular forms produced in \cite[Table~1]{BOSS}, we find
\begin{multline*}
\Delta(q)^2 - \Delta(\bar{q})^2 \equiv 400f_1c_4^2\Delta + 288f_5 \Delta + 292f_4 \Delta + 341f_1^2 c_4^4 + 108f_5 c_4^3+ 190f_4 c_4^3 \\
+ 436f_1^3c_4^3 + 278f_9 c_4^2 + 364f_1 f_5 c_4^2 + 367f_1^4 c_4^2 + 32f_{11}c_4 + 110f_8 c_4 + 352f_1^2 f_5 c_4 + 384f_1^5c_4 \\ + 192f_2 f_7 + 256f_5^2 \quad (\text{mod $512$})
\end{multline*}
and thus (consulting the Adams filtrations of \cite[Table~1]{BOSS}) we have
$$ d_1(\Delta^2) = c_4^2 f_1^4 + 256 f_5^2 + \text{terms of higher Adams filtration}. $$ 
Using the fact (again from \cite[Table~1]{BOSS}) that $f_1^4$ and $f_5^2$ are detected in $\Ext_{A(2)_*}(\br{A \mmod A(2)}_*)$ by $\zeta_1^{32}$ and $\zeta_2^{16}$ (and that $c_4$ corresponds to $v_1^4$), we deduce 
$$ d_1^{\FF_2}(v_2^8) = v_1^8 \zeta_1^{32} + h_0^8 \zeta_2^{16}. $$
It follows that
$$ d_1^{H(8)}(v_2^8) = v_1^8 \zeta_1^{32} $$
and therefore
$$ d_1^{H(8,v_1^8)}(v_2^8) = 0. $$
Therefore $v_2^8 \in \Ext_{A(2)_*}$ lifts to an element
$$ \td{v_2^8} \in \Ext_{A_*}(F \otimes H(8,v_1^8)). $$
The geometric boundary theorem applied to the triangle of truncated algebraic tmf-resolutions
$$
\scriptsize
\xymatrix@C-2em{
\quad \ar[r]^-{\partial^F} &
\Ext_{A_*}(F \otimes \Sigma^{24}H(8)[-8]) \ar[r]^{v_1^8} \ar[d] & \Ext_{A_*}(F\otimes H(8)) \ar[r] \ar[d] &
\Ext_{A_*}(F \otimes H(8,v_1^8)) \ar[d] 
\\
\quad \ar[r]  &
\Ext_{A(2)_*}(\Sigma^{24}H(8)[-8]) \ar[r]^{v_1^8} \ar[d]^{d_1^{H(8)}} & 
\Ext_{A(2)_*}(H(8)) \ar[r] \ar[d]^{d_1^{H(8)}} &
\Ext_{A(2)_*}(H(8,v_1^8)) \ar[d]^{d_1^{H(8,v_1^8)}} 
\\
\quad \ar[r] &
\Ext_{A(2)_*}(\br{A\mmod A(2)}_* \otimes \Sigma^{24}H(8)[-8]) \ar[r]^-{v_1^8} & 
\Ext_{A(2)_*}(\br{A\mmod A(2)}_* \otimes H(8)) \ar[r]  &
\Ext_{A(2)_*}(\br{A\mmod A(2)}_* \otimes H(8,v_1^8)) 
}
$$
allows us to deduce that
$\partial^F(\td{v_2^8}) = \iota_2(h_5[0])$
in the following diagram:
$$
\xymatrix{
&\Ext_{A_*}(H(8,v_1^8)) \ar[r]^-\partial \ar[d]_{\iota_1} & \Ext_{A_*}(\Sigma^{24}H(8)[-8]) \ar[d]^{\iota_2} \\ 
\Ext_{A_*}(F \otimes H(8)) \ar[r] &  \Ext_{A_*}(F \otimes H(8,v_1^8)) \ar[r]_-{\partial^F} \ar[d] & 
\Ext_{A_*}(F \otimes \Sigma^{24}H(8)[-8])  \\
& \Ext_{A(2)_*}(H(8,v_1^8)) &
}
$$
If we can show that 
$$ \iota_1(h_5[17]) = \td{v_2^8} $$
then the proof will be complete (since $\td{v_2^8}$ maps to $v_2^8$ by construction).  Since $\partial (h_5[17]) = h_5[0]$,  we deduce that 
$$ \partial^F(\iota_1(h_5[17])-\td{v_2^8}) = 0 $$
and therefore $\iota_1(h_5[17])-\td{v_2^8}$ lifts to an element 
$$ y \in \Ext^{8,48+8}_{A_*}(F \otimes H(8)). $$  
The proof will be completed by showing this Ext group is zero.  Considering the truncated algebraic $\tmf$-resolution for $F \otimes H(8)$, the only non-trivial element in
$$ \Ext^{8,48+8}_{A(2)_*}(H(8)) $$
is $v_2^8$, and we have already established that $d_1^{H(8)}(v_2^8) \ne 0$.  Examination of the Ext charts in \cite[Sec.~5]{BOSS} reveals that
$$ \Ext^{7,49+7}_{A(2)_*}(\br{A \mmod A(2)}_*\otimes H(8)) = 0. $$
We deduce that $\Ext^{8,48+8}_{A_*}(F \otimes H(8)) = 0$, as desired.
\end{proof}

\section{$\bo$-Brown-Gitler comodules}\label{sec:boi}

Brown and Gitler constructed spectra (Brown-Gitler spectra) which realize certain quotients of the Steenrod algebra (Brown-Gitler modules) \cite{BrownGitler}.  These have found a variety of important applications in manifold theory and homotopy theory.  An integral variant of their construction (integral Brown-Gitler modules/spectra) were found by Mahowald to play an important role in the theory of $\bo$-resolutions \cite{Mahowaldbo}.  Mahowald, Jones, and Goerss constructed analogous modules/spectra in the context of connective $K$-theory \cite{GoerssJonesMahowald}, and these play a role in the theory of $\tmf$-resolutions analogous to the role that integral Brown-Gitler modules/spectra play in the $\bo$-resolution \cite{MahowaldRezk}, \cite{DavisMahowald}, \cite{BHHM}, \cite{BOSS}.  

For the purposes of this paper we shall be concerned with the duals of the $\bo$-Brown-Gitler modules, which are $A_*$-comodules.  Endow the mod $2$ homology of the connective real $K$-theory spectrum 
$$ H_*(\bo;\FF_2) \cong A\mmod A(1)_* = \FF_2[\xib_1^4, \xib_2^2, \xib_3, \ldots] $$
with an grading by declaring the \emph{weight} of $\xib_i$ to be $2^{i-1}$.
The $i$th \emph{$\bo$-Brown-Gitler} comodule is the subcomodule
$$ \bou_i = F_{4i}A\mmod A(1)_* \subset A \mmod A(1)_* $$
spanned by elements of weight less than or equal to $4i$.

The analysis of the $E_1$-page of the algebraic $\tmf$-resolution is simplified via the decomposition of $A(2)_*$-comodules
$$ \br{A \mmod A(2)}_* \cong \bigoplus_{i > 0} \Sigma^{8i} \bou_i $$
of \cite[Cor.~5.5]{BHHM}.
We therefore have a decomposition of the $E_1$-page of the algebraic $\tmf$-resolution for $M$ given by
\begin{equation}\label{eq:E1decomp}
E_1^{s,t,n} \cong \bigoplus_{i_1, \ldots, i_n > 0}\Ext_{A(2)_*}^{s,t}(\Sigma^{8(i_1+\cdots+i_n)}\bou_{i_1} \otimes \cdots \otimes \bou_{i_n} \otimes M).
\end{equation}

For any $M$, the computation of
$$ \Ext_{A(2)_*}^{s,t}(\Sigma^{8(i_1+\cdots+i_n)}\bou_{i_1} \otimes \cdots \otimes \bou_{i_n} \otimes M) $$
can be inductively determined from $\Ext_{A(2)_*}(\bou_1^{\otimes k} \otimes M)$ by means of a set of exact sequences of $A(2)_*$-comodules which relate the $\bou_i$'s \cite[Sec.~7]{BHHM} (see also \cite{BOSS}):
\begin{gather}
0\to \Sigma^{8j} \ul{\bo}_j \to \ul{\bo}_{2j}\to A(2)\mmod A(1)_* \otimes \ul{\tmf}_{j-1}  \to \Sigma^{8j+9} \ul{\bo}_{j-1} \to 0 \label{eq:boSES1},
\\
0 \to \Sigma^{8j} \ul{\bo}_j \otimes \ul{\bo}_1 \to \ul{\bo}_{2j+1}\to A(2)\mmod A(1)_* \otimes \ul{\tmf}_{j-1} \to 0 \label{eq:boSES2}
\end{gather}
Here, $\ul{\tmf}_j$ is the $j$th $\tmf$-Brown-Gitler comodule --- it is the subcomodule of 
$$ H_*(\tmf;\FF_2) \cong A\mmod A(2)_* = \FF_2[\xib_1^8, \xib_2^4, \xib_3^2, \xib_4, \ldots] $$
spanned by monomials of weight less than or equal to $8j$.

\begin{rmk}
Technically speaking, as is addressed in \cite[Sec.~7]{BHHM}, the comodules $A(2)\mmod A(1)_* \otimes \ul{\tmf}_{j-1}$ in the above exact sequences have to be given a slightly different $A(2)_*$-comodule structure from the standard one arising from the tensor product.  However, this different comodule structure ends up being $\Ext$-isomorphic to the standard one.  As we are only interested in Ext groups, the reader can safely ignore this subtlety.
\end{rmk}

The exact sequences (\ref{eq:boSES1}) and (\ref{eq:boSES2}) can be re-expressed as resolutions in the derived category of $A(2)_*$-comodules:
\begin{gather*}
\ul{\bo}_{2j}\to A(2)\mmod A(1)_* \otimes \ul{\tmf}_{j-1}  \to \Sigma^{8j+9} \ul{\bo}_{j-1} \to \Sigma^{8j} \ul{\bo}_j[2],
\\
\ul{\bo}_{2j+1}\to A(2)\mmod A(1)_* \otimes \ul{\tmf}_{j-1} \to \Sigma^{8j} \ul{\bo}_j \otimes \ul{\bo}_1[1] 
\end{gather*}
which give rise to spectral sequences
\begin{gather*}
E_1^{n,s,t} = 
\left\{
\begin{array}{ll}
\Ext^{s,t}_{A(1)_*}(\ul{\tmf}_{j-1} \otimes M), & n = 0, \\
\Ext^{s,t}_{A(2)_*}(\Sigma^{8j+9}\bou_{j-1} \otimes M[-1]), & n = 1, \\ 
\Ext^{s,t}_{A(2)_*}(\Sigma^{8j}\bou_j \otimes M), & n = 2,  \\
0, & n > 2 \\ 
\end{array}
\right\}
\Rightarrow \Ext^{s,t}_{A(2)_*}(\bou_{2j} \otimes M), \\
\\
E_1^{n,s,t} = \left\{
\begin{array}{ll}
\Ext^{s,t}_{A(1)_*}(\ul{\tmf}_{j-1} \otimes M), & n = 0, \\
\Ext^{s,t}_{A(2)_*}(\Sigma^{8j}\bou_j \otimes \bou_1 \otimes M), & n = 1, \\
0, & n > 1 \\ 
\end{array}
\right\}
\Rightarrow \Ext^{s,t}_{A(2)_*}(\bou_{2j+1} \otimes M). \\
\end{gather*}
These spectral sequences have been observed to collapse in low degrees (see \cite{BOSS}) but in general it seems possible they might not collapse.
They inductively build $\Ext_{A(2)_*}(\bou_i \otimes M)$ out of $\Ext_{A(2)_*}(\bou_1^{\otimes k} \otimes M)$ and $\Ext_{A(1)_*}(\ul{\tmf}_j \otimes M)$.

Define the $i$th $\bo$-Brown-Gitler polynomial $f_i(s,t,x)$ inductively by the formulas (inspired from the exact sequences by ignoring the $A(2)\mmod A(1)_*$ terms):
\begin{align*}
f_0 & = 1 \\
f_1 & = x \\
f_{2j} & = t^j f_j + st^{j+1}f_{j-1} \\
f_{2j+1} & = t^j x f_j 
\end{align*}
For a multi-index $I = (i_1, \ldots, i_n)$, define
$$ \bou_I := \bou_{i_1} \otimes \cdots \otimes \bou_{i_n} $$ 
and 
$$ f_I(s,t,x) = f_{i_1} \cdots f_{i_n}. $$
Write 
\begin{equation}\label{eq:f_I}
 f_{I} = \sum_{k,l,m} a(I)_{k,l,m} s^k t^l x^m. 
 \end{equation}
Then inductively the exact sequences give rise to a sequence of spectral sequences 
\begin{equation}\label{eq:boIss}
\left\{
\begin{array}{c}
\bigoplus_{k,l,m} \Ext^{s,t}_{A(2)_*}(\Sigma^{8l+k}\bou_1^{\otimes m}[-k] \otimes M)^{\oplus a(I)_{k,l,m}} \\
\oplus \Ext^{s,t}_{A(1)_*}(\text{something})
\end{array}
\right\}
\Rightarrow \Ext^{s,t}_{A(2)_*}(\bou_{I} \otimes M).
\end{equation}

The following facts about the polynomials $f_i(s,t,x)$ can be easily established by induction. 

\begin{lem} $\quad$
\begin{enumerate}
\item The coefficients in (\ref{eq:f_I}) satisfy $a(i)_{k,l,m} = 0$ unless $i = l+m$.
\item $f_i(s,t,x) \equiv t^{i-m} x^m \mod (s)$, where $m$ is the number of $1$'s in the dyadic expansion of $i$.
\item The highest power of $x$ appearing in $f_{i}(s,t,x)$ is less than or equal to the number of digits in the dyadic expansion of $i$.
\item The highest power of $s$ appearing in $f_i$ is the number of $1$'s to the left of the rightmost $0$ in the dyadic expansion of $i$. 
\end{enumerate}
\end{lem}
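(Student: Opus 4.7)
The plan is to prove all four parts by simultaneous induction on $i$, using the defining recursion
$$ f_{2j} = t^j f_j + s t^{j+1} f_{j-1}, \qquad f_{2j+1} = t^j x f_j $$
together with the base cases $f_0 = 1$ and $f_1 = x$, each of which satisfies all four statements by inspection. Throughout, let $\nu(i)$ denote the number of $1$'s in the dyadic expansion of $i$, let $d(i)$ denote the number of dyadic digits of $i$, and let $\sigma(i)$ denote the number of $1$'s to the left of the rightmost $0$ in the dyadic expansion of $i$. Elementary manipulations of binary expansions give $\nu(2j) = \nu(j)$, $\nu(2j+1) = \nu(j) + 1$, $\sigma(2j) = \nu(j)$, and $\sigma(2j+1) = \sigma(j)$, which will be used repeatedly.

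Part (1) is the assertion that $f_i$ is homogeneous of weight $i$ under the grading with $\lvert s \rvert = 0$ and $\lvert t \rvert = \lvert x \rvert = 1$: the recursion preserves this grading since $t^j f_j$ and $s t^{j+1} f_{j-1}$ both have weight $2j$, while $t^j x f_j$ has weight $2j+1$. Part (2) follows by reducing the recursion modulo $s$, which gives $f_{2j} \equiv t^j f_j$ and $f_{2j+1} \equiv t^j x f_j$; combined with the identities for $\nu(2j)$ and $\nu(2j+1)$ the induction is immediate. For part (3), let $M(i)$ denote the highest power of $x$ in $f_i$. The recursion yields $M(2j+1) = M(j)+1$ and $M(2j) \le \max(M(j),\, M(j-1))$; since $d(2j) = d(2j+1) = d(j)+1$ for $j \ge 1$ and $d$ is non-decreasing, the bound $M(i) \le d(i)$ propagates.

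Part (4) is the most delicate step and is where I expect the main work to lie. Let $S(i)$ denote the highest power of $s$ appearing in $f_i$. The recursion gives $S(2j+1) = S(j)$, matching $\sigma(2j+1) = \sigma(j)$, and $S(2j) = \max(S(j),\, 1 + S(j-1))$. By induction on $i$, establishing (4) therefore reduces to the purely combinatorial identity
$$ \max\bigl(\sigma(j),\, 1 + \sigma(j-1)\bigr) = \nu(j). $$
This is settled by splitting on the parity of $j$. When $j = 2k+1$ is odd, one has $\sigma(j) = \sigma(k) \le \nu(k)$ and $\sigma(j-1) = \sigma(2k) = \nu(k)$, so $1 + \sigma(j-1) = \nu(k)+1 = \nu(j)$ dominates. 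When $j = 2k$ is even, $\sigma(j) = \nu(k) = \nu(j)$ supplies equality, and it remains to check $1 + \sigma(k-1) \le \nu(k)$; writing $k$ in binary as $c\,1\,0^m$ for $m \ge 0$, if $m \ge 1$ then $k-1 = c\,0\,1^m$ and $\sigma(k-1) = \nu(c) = \nu(k)-1$, while if $m = 0$ then $\sigma(k-1) \le \nu(k-1) = \nu(k)-1$. Assembling the four inductions yields the lemma.
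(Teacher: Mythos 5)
Your induction is correct in all four parts (including the key combinatorial identity $\max(\sigma(j),\,1+\sigma(j-1))=\nu(j)$, which works because the $f_i$ have nonnegative coefficients so no cancellation can lower the top $s$-degree), and it is exactly the argument the paper has in mind: the paper simply asserts the lemma "can be easily established by induction" from the defining recursion and gives no further details. So your proposal fills in the intended proof rather than taking a different route.
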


Finally, the following lemma explains that, in our $H(8,v_1^8)$ computations, we may disregard terms coming from $\Ext_{A(1)_*}$ in the sequence of spectral sequences (\ref{eq:boIss}).

\begin{lem}
In the algebraic $\tmf$-resolution for $M = H(8,v_1^8)$, the terms $$\Ext_{A(1)_*}(\text{something})$$ in (\ref{eq:boIss}) do not contribute to $\Ext^{s,t}_{A_*}(H(8,v_1^8))$ if
$$ s > \frac{1}{7}(t-s)+\frac{51}{7}. $$
\end{lem}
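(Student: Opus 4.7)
The plan is to combine a careful cataloguing of the $\Ext_{A(1)_*}$-terms in the iterated spectral sequence (\ref{eq:boIss}) with the nilpotency of $v_1^8$ on $H(8,v_1^8)$ and the slope-$1/2$ vanishing line of $\Ext_{A(1)_*}$. First I would observe that iterating the short exact sequences (\ref{eq:boSES1}) and (\ref{eq:boSES2}) starting from $\bou_I = \bou_{i_1}\otimes\cdots\otimes\bou_{i_n}$ produces $\Ext_{A(1)_*}$-terms of the form
$$
\Ext_{A(1)_*}^{s,t}\bigl(\Sigma^a \ul{\tmf}_{j_1}\otimes\cdots\otimes\ul{\tmf}_{j_p}\otimes\bou_{K}\otimes H(8,v_1^8)\bigr),
$$
where the total internal shift $a$ combines the algebraic $\tmf$-resolution shift $\Sigma^{8|I|}$ appearing in (\ref{eq:E1decomp}) with the internal shifts $\Sigma^{8j}$ and $\Sigma^{8j+9}$ coming from the exact sequences. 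In particular $a \ge 8|I| \ge 8n$.

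Next I would use the fact that $H(8,v_1^8)$ was built as the cofiber of $v_1^8$, so that $v_1^8$ acts nullhomotopically on the associated $\Ext$-groups. Since $v_1^8$ corresponds to $\alpha^4 = h_0^4\beta^2 \in \Ext_{A(1)_*}(\FF_2)$, and since $h_0^3$ is also cofibered out, this forces a bound on the number of $\beta$-multiples that can appear on any surviving class. Consequently the $(t-s)$-coordinates of surviving $\Ext_{A(1)_*}$-contributions are bounded as $t-s \le a + D + O(1)$, where $D$ is the top internal degree of $\ul{\tmf}_{j_1}\otimes\cdots\otimes H(8,v_1^8)$. Here one uses that the top degree of $\ul{\tmf}_j$ is at most $16j$ (from its description as a subcomodule of $\FF_2[\xib_1^8,\xib_2^4,\xib_3^2,\xib_4,\ldots]$ of weight at most $8j$) and that $H(8,v_1^8)$ has top internal degree $27$ (from the top cell at bidegree $(18,9)$).

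Finally, I would translate to $\Ext_{A_*}(H(8,v_1^8))$ coordinates. A class at bidegree $(s,t)$ in $\Ext_{A(1)_*}^{s,t}(\Sigma^a N\otimes H(8,v_1^8))$ at algebraic $\tmf$-resolution level $n$ lands at $(t-s-n,\, s+n)$ in $\Ext_{A_*}$. Using the slope-$1/2$ vanishing $s \le \tfrac{1}{2}(t-s-a) + c_0$ for $\Ext_{A(1)_*}$ of a module with bottom cell at internal degree $a$, combined with $a\ge 8n$ and the bound on $t-s$ from the previous paragraph, one extracts the inequality $s+n \le \tfrac{1}{7}(t-s-n) + \tfrac{51}{7}$ by optimizing over all iteration paths.

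The main obstacle is the middle step: verifying that the truncation of $\beta$-iterates coming from $v_1^8 = 0$ really does propagate uniformly through every $\Ext_{A(1)_*}$-term in the iterated tensor products $\Sigma^a N\otimes H(8,v_1^8)$, and carefully accounting for the worst-case iteration branch to obtain the precise constants. The intercept $51/7 \approx 7.29$ is tuned so that the line $s = \tfrac{1}{7}(t-s) + \tfrac{51}{7}$ lies just above the top cell $(18,9)$ of $H(8,v_1^8)$, which suggests the bound is sharp and will be needed in the downstream analysis.
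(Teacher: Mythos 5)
Your overall skeleton (the $n$-line terms are shifted by at least $8n$, so contributions move in a slope-$1/7$ direction, and killing $h_0^3$ and $v_1^8$ truncates the periodic towers over $A(1)$) is in the right spirit, but the quantitative core of your argument has a gap and would not close. The paper's proof rests on one explicit computation you never make: the finite chart $\Ext_{A(1)_*}(H(8,v_1^8))$, whose lowest bounding line of slope $1/7$ is exactly $s = \frac{1}{7}(t-s) + \frac{51}{7}$. Every term $\Ext_{A(1)_*}(\text{something})$ in (\ref{eq:boIss}) has the form $\Ext_{A(1)_*}(N \otimes H(8,v_1^8))$ with $N$ connective and the whole term shifted by at least $8n$ at the $n$-line; a cell filtration of $N$ exhibits these groups as built from rightward-shifted copies of that finite chart, a rightward shift keeps classes below the line, and the resolution degree contributes $(+7,+1)$ to (stem, filtration) per unit of $n$, i.e.\ a translation parallel to the line. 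Your substitute --- a slope-$1/2$ vanishing line measured from the bottom cell in degree $a$, combined with the stem bound $t-s \le a + D + O(1)$ --- cannot deliver the slope-$1/7$ conclusion: for terms with large $\ul{\tmf}_j$ factors the stem range has length on the order of $D$, over which a slope-$1/2$ envelope rises by roughly $D/2$, far above the allowed rise of $D/7$, so the ``optimization over all iteration paths'' has nothing to optimize with. The missing input is precisely that the filtration of these terms is uniformly bounded by the fixed finite chart of $\Ext_{A(1)_*}(H(8,v_1^8))$ (finite because both $h_0^3$ and $v_1^8$ have been cofibered out), not by a line growing with the stem; also note that the slope-$1/2$ vanishing statement as you phrase it fails for general connective comodules because of infinite vertical $h_0$-towers --- it is only the $H(8,v_1^8)$ factor that truncates them.

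Two smaller points. First, $v_1^8$ is detected in $\Ext_{A(1)_*}(\FF_2)$ by $\beta^2$ in bidegree $(t-s,s)=(16,8)$, not by $\alpha^4 = h_0^4\beta^2$, which lies in filtration $12$. Second, your heuristic for the constant is off: the slope-$1/7$ line through the top cell $(18,9)$ of $H(8,v_1^8)$ has intercept $45/7$, not $51/7$; the intercept $51/7$ is read off the computed chart of $\Ext_{A(1)_*}(H(8,v_1^8))$ (which extends above filtration $9$), and nothing in your proposal would produce that number.
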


\begin{proof}
The connectivity of the $n$-line of the $\tmf$-resolution for $H(8,v_1^8)$ is $8n-1$, meaning that the bottom cell of the $n$-line contributes to $\Ext^{n,8n}_{A_*}$.  In particular, the contribution of the bottom cells rises on a line of slope $1/7$.  The groups $\Ext^{s,t}_{A(1)_*}(H(8,v_1^8))$ are displayed below.

\begin{center}
\includegraphics[height = .2\textheight]{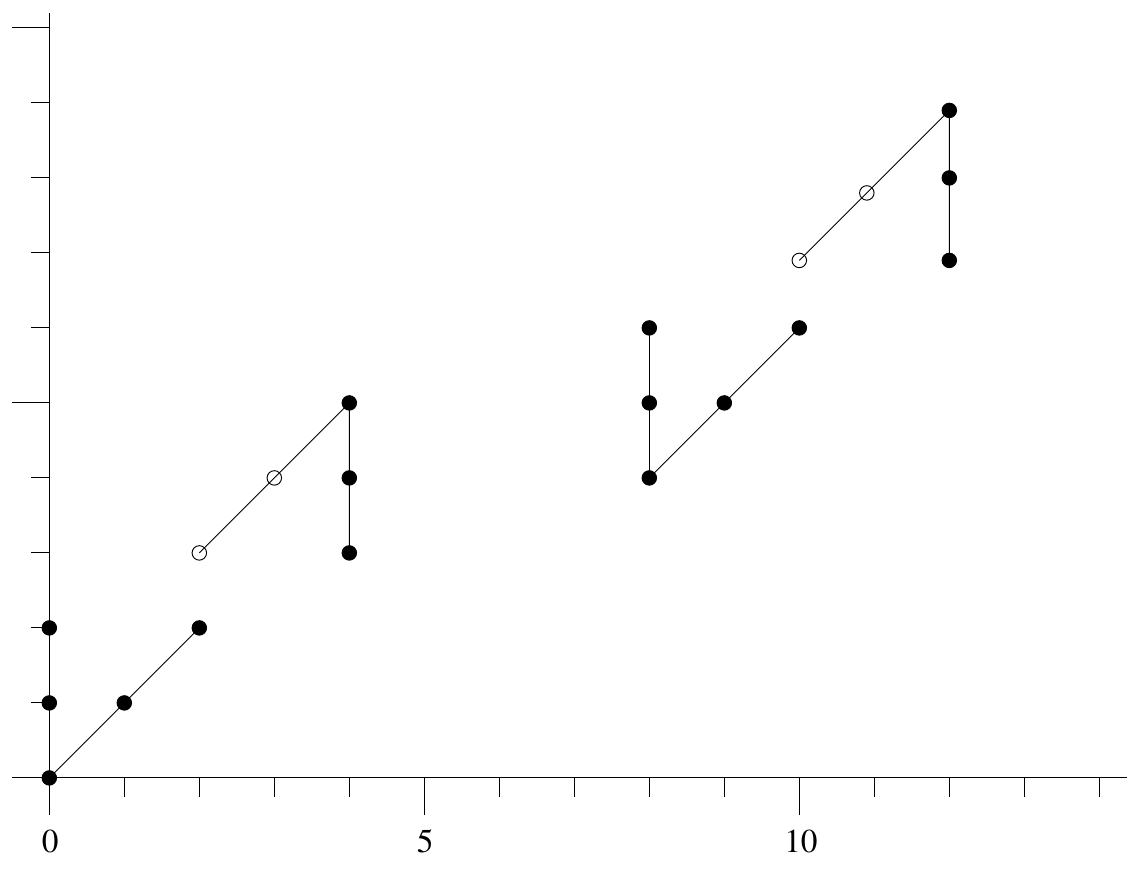}
\end{center}

The lowest line of slope $1/7$ which bounds the pattern above is 
$$ s = \frac{1}{7}(t-s)+\frac{51}{7}. $$
The result follows.
\end{proof}

\section{The MASS for $\tmf \wedge M(8,v_1^8)$}\label{sec:tmfM38}

\begin{figure}
\includegraphics[angle = 90, origin=c, height =.7\textheight]{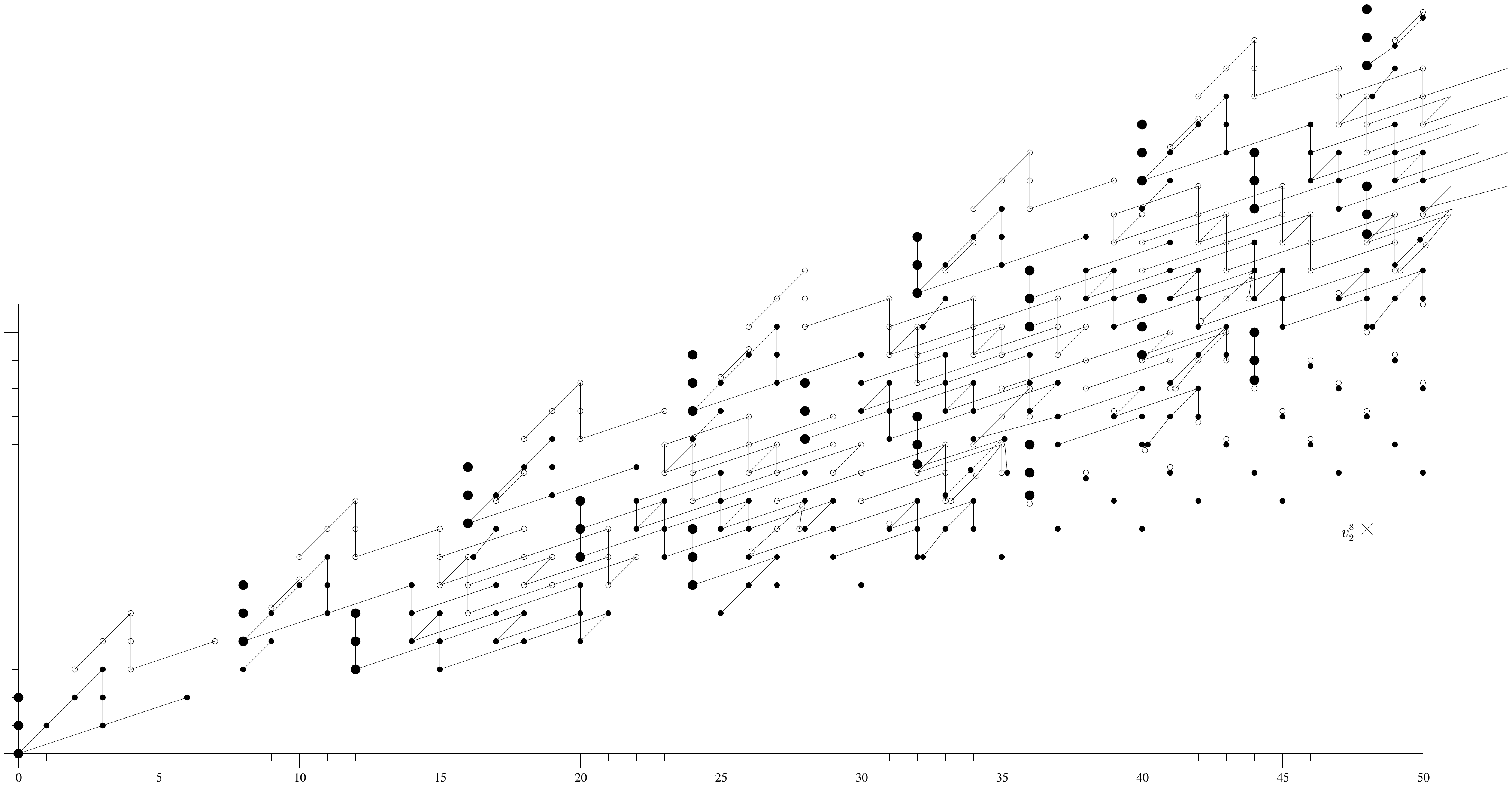}
\caption{The groups $\Ext_{A(2)_*}(H(8))$.}\label{fig:ExtH3}
\end{figure}

We now turn our attention to the analysis of the MASS for $\tmf \wedge M(8,v_1^8)$.  The groups $\Ext_{A(2)_*}(H(8))$ are easily computed using the computation of $\Ext_{A(2)_*}(\FF_2)$ (see, for example, the chart on p. 194 of \cite{tmf}) using the long exact sequence on $\Ext$ induced by the cofiber sequence
$$ \Sigma^{3}\FF_2[-3] \xrightarrow{h_0^3} \FF_2 \rightarrow H(8) \rightarrow \Sigma^3 \FF_2[-2]. $$
The result is displayed in Figure~\ref{fig:ExtH3}.  The computation is simplified by the fact that all $h_0$-torsion in $\Ext_{A(2)_*}(\FF_2)$ is $h_0^3$-torsion.  In this figure, solid dots correspond to classes carried by the ``$0$-cell'' of $H(8)$, and open circles correspond to classes carried by the ``$1$-cell'' of $H(8)$.  The large solid circles correspond to $h_0$-torsion free classes of $\Ext_{A(2)_*}(\FF_2)$ on the $0$-cell of $H(8)$.

\begin{figure}
\includegraphics[angle = 90, origin=c, height =.7\textheight]{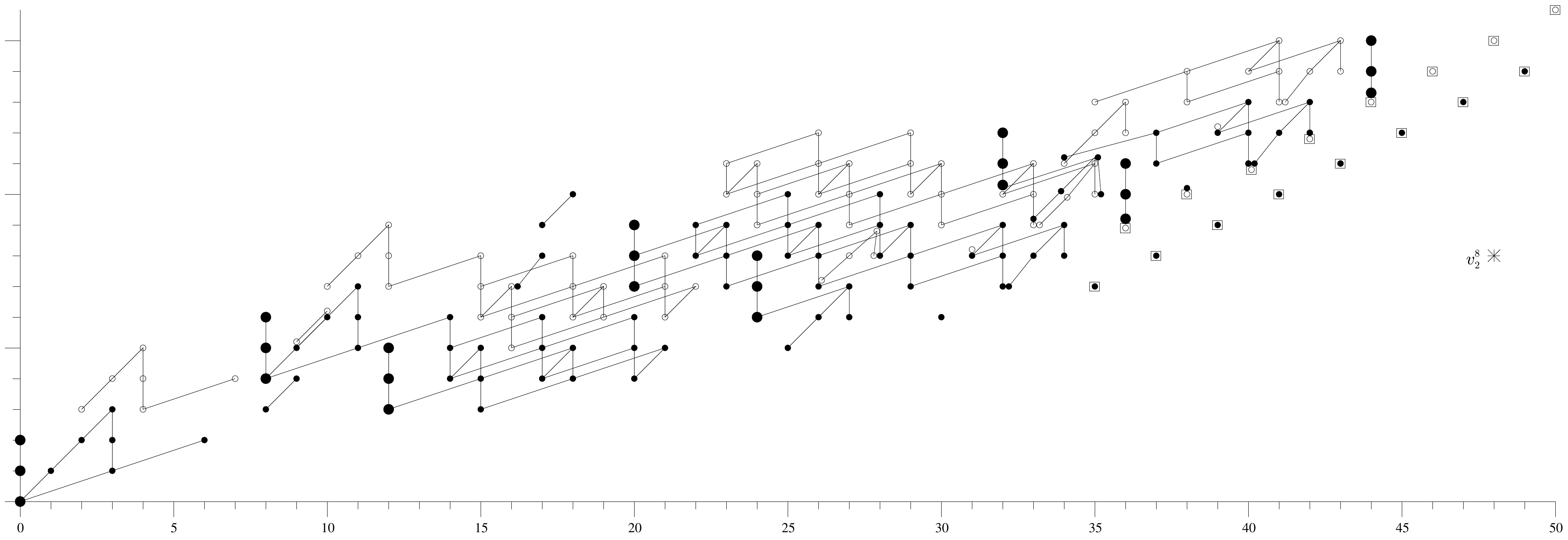}
\caption{The groups $\Ext_{A(2)_*}(H(8,v_1^8))$.}\label{fig:ExtH38}
\end{figure}

The computation of $\Ext_{A(2)_*}(H(8,v_1^8))$ is similarly accomplished by the long exact sequence on $\Ext$ given by the cofiber sequence
$$ \Sigma^{24}H(8)[-8] \xrightarrow{v_1^8} H(8) \rightarrow H(8,v_1^8) \rightarrow \Sigma^{24}H(8)[-7]. $$
Note that every class in $\Ext_{A(2)_*}(H(8))$ is $v_1^4$-periodic, so this computation is not difficult.  The result is depicted in Figure~\ref{fig:ExtH38}.  Here we retain the notation from Figure~\ref{fig:ExtH3} with regard to solid dots, large solid dots, and open circles.  The classes with solid boxes around them support $h_{2,1}$ towers, where $h_{2,1}$ corresponds to the class $[\xi^2_2]$ in the cobar complex for $A(2)_*$.  Everything is $v_2^8$-periodic.
  
\begin{figure}
\includegraphics[angle = 90, origin=c, height =.7\textheight]{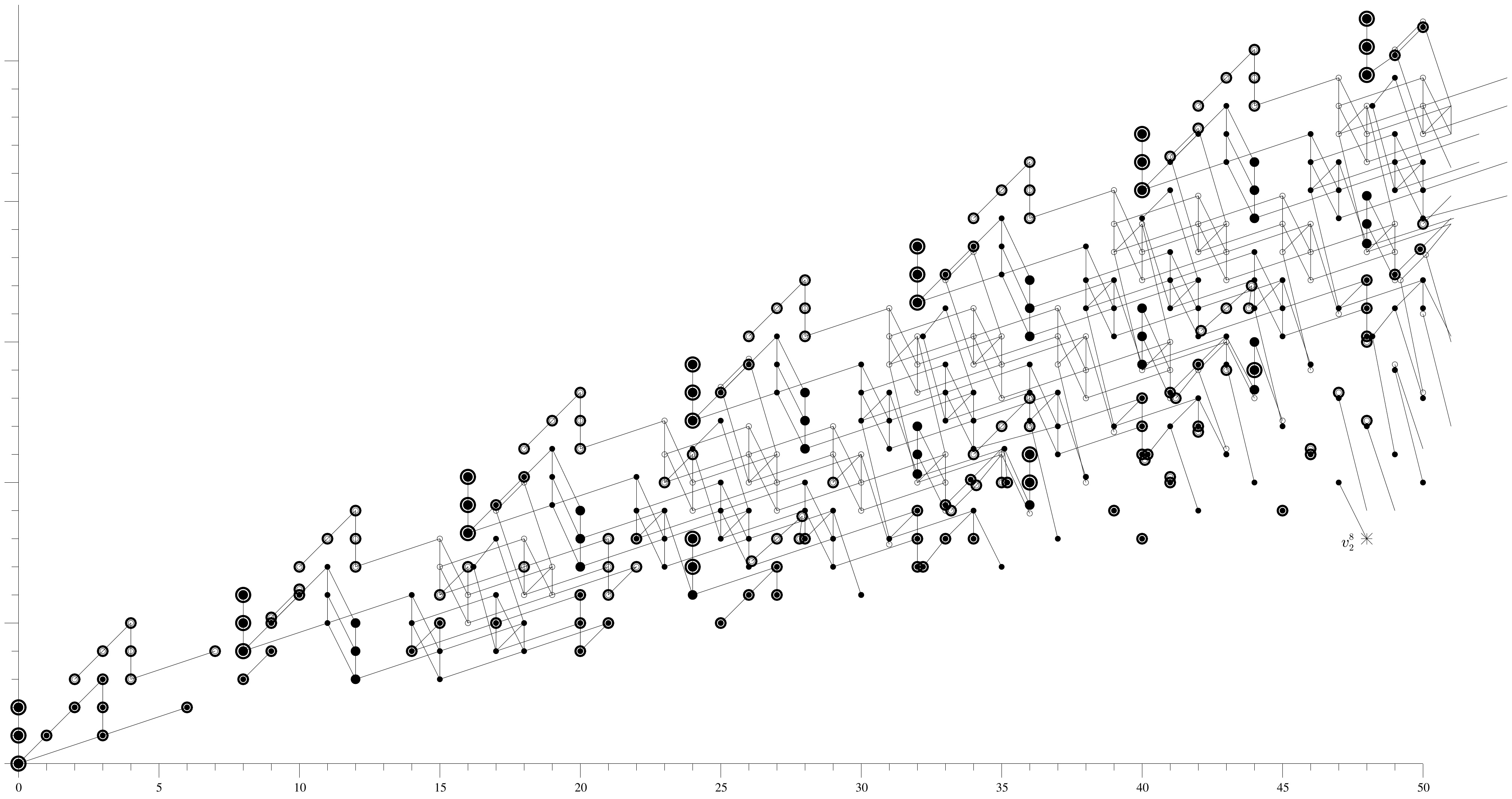}
\caption{The MASS for $\tmf \wedge M(8)$.}\label{fig:MASSM3}
\end{figure}

\begin{figure}
\includegraphics[angle = 90, origin=c, height =.7\textheight]{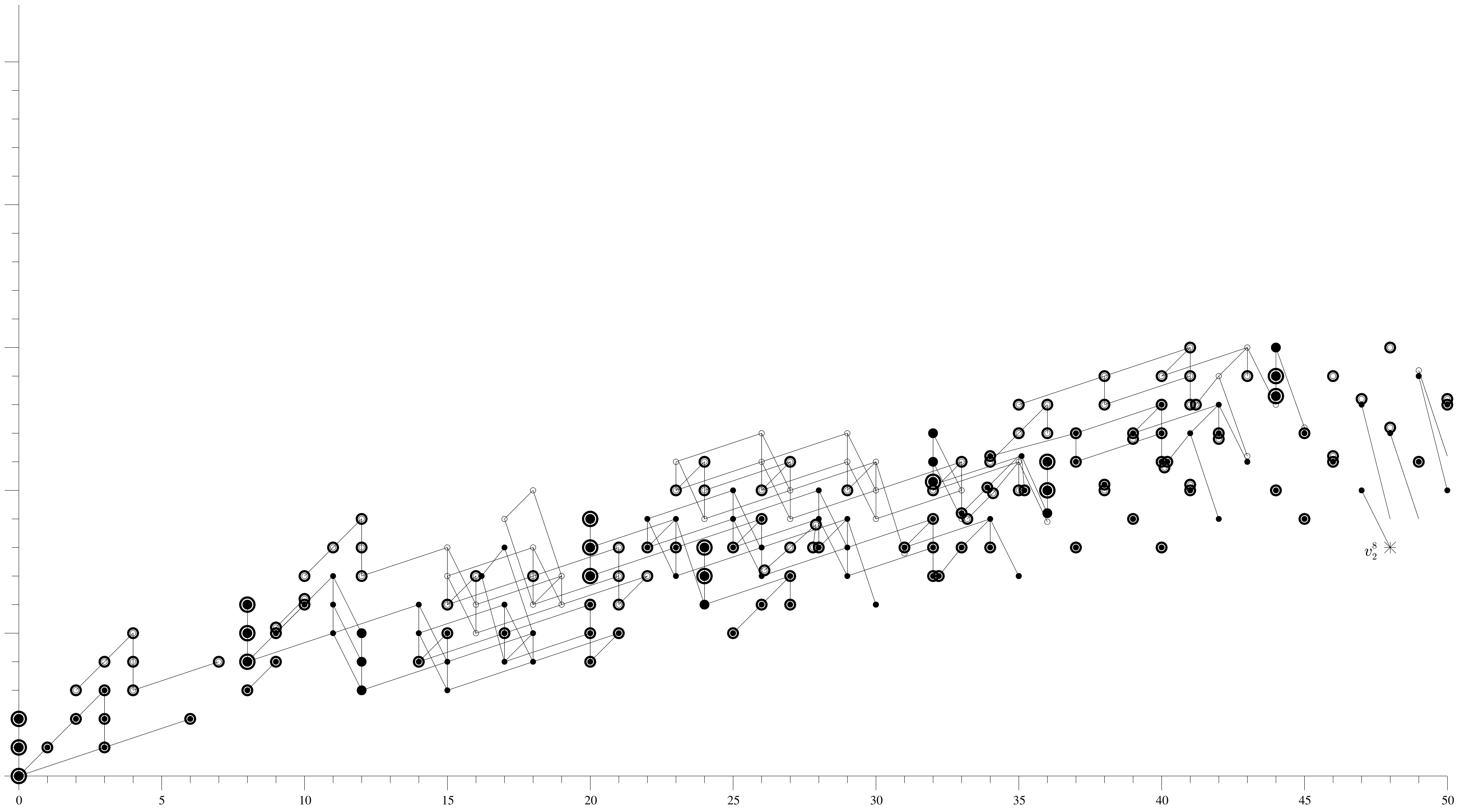}
\caption{The MASS for $\tmf \wedge M(8,v_1^8)$.}\label{fig:MASSM38}
\end{figure}

Figure~\ref{fig:MASSM3} depicts the differentials in the MASS for $\tmf \wedge M(8)$ through a range.  The complete computation of this MASS can be similarly accomplished, but it is not necessary for our purposes.  For the most part the differentials are deduced from the maps of spectral sequences induced by the maps
\begin{gather*}
\tmf \rightarrow \tmf \wedge M(8), \\
\tmf \wedge M(8) \rightarrow \Sigma \tmf.
\end{gather*}
The abutment of the spectral sequence, $\tmf_*M(8)$ is already easily computed from the long exact sequence associated to the cofiber sequence
$$ S^0 \xrightarrow{8} S^0 \rightarrow M(8), $$
and this information can be used to deduce the remaining differentials.
For the most part, these remaining differentials are related to hidden $\cdot 8$ extensions in the ASS for $\tmf$ via the geometric boundary theorem \cite[Lem.~A.4.1]{goodEHP}.  

\begin{ex}
The names we use for elements are those indicated in the chart on p.195 of \cite{tmf}.  We use $x[k]$, for $x$ an element of the ASS for $\tmf$, to denote ``$x$ on the $k$-cell''.
Via the geometric boundary theorem, the differential
$$ d_2((\ul{c_4\Delta+q})[1]) = \ul{8\Delta}\cdot \ul{(c_4+\epsilon)}[0] $$
arises from the hidden extension
$$ \ul{8\cdot (c_4\Delta+q)} =  \ul{8\Delta}\cdot \ul{(c_4+\epsilon)}. $$
\end{ex}

\begin{ex}
The most subtle differential in this range is
\begin{equation}\label{eq:diffdiff}
d_3(v_1^2 g^2[1]) = h_0^2\ul{2c_4c_6\Delta}[0].
\end{equation}
This differential does \emph{not} come from a hidden extension. In fact, in the ASS for $\tmf$ there is a differential
\begin{equation}\label{eq:dv12g2} 
d_4(v_1^2 g^2) = v_1^8 \cdot \ul{2\nu\Delta}.
\end{equation}
Naively, one might expect that differential~(\ref{eq:dv12g2}) lifts to a differential 
$$ d_4(v_1^2g^2[1]) =  v_1^8 \cdot \ul{2\nu\Delta}[1], $$
but that differential is preceded by (\ref{eq:diffdiff}).  Differential (\ref{eq:diffdiff}) is forced when we analyze the MASS for $\tmf \wedge M(8,v_1^8)$, and compare it to the answer predicted by the Atiyah-Hirzebruch spectral sequence for $\tmf_*M(8,v_1^8)$.
\end{ex}

Figure~\ref{fig:MASSM38} depicts the differentials in the MASS for $\tmf \wedge M(8,v_1^8)$ through the same range.  Again, the complete computation of this MASS can be similarly accomplished.  Remarkably, since the $E_2$-term of the MASS for $\tmf \wedge M(8)$ is entirely $c_4^2 = v_1^8$-periodic, there end up being no hidden $c_4^2$-extensions, and all the differentials in the MASS for $M(8,v_1^8)$ are simply given by the images of the differentials in the MASS for $M(8)$ under the map of spectral sequences
$$ \{ E_r^{*,*}(M(8))\} \rightarrow \{ E_r^{*,*}(M(8,v_1^8))\}. $$

\begin{figure}
\includegraphics[angle = 90, origin=c, height =.7\textheight]{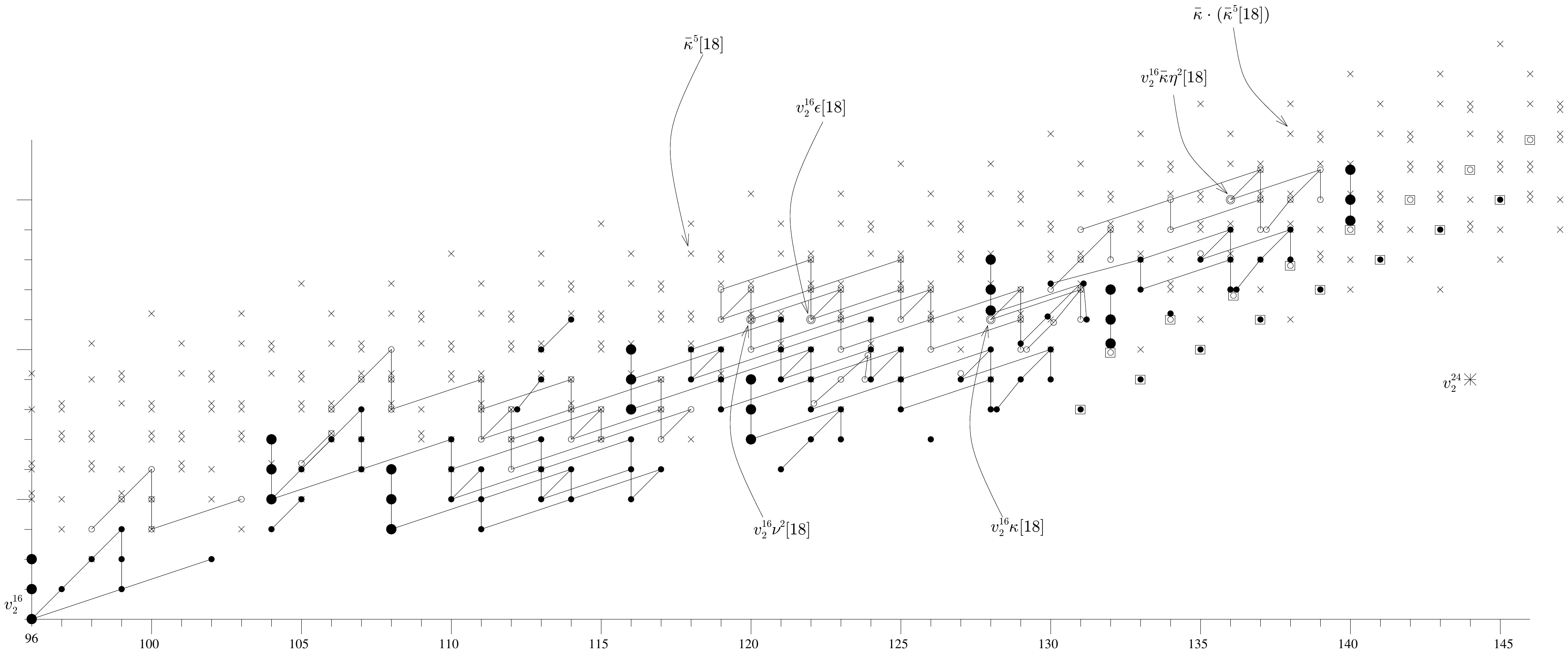}
\caption{$\Ext_{A(2)_*}(H(8,v_1^8))$ in the range 96-145, with classes which detect certain lifts of homotopy elements to the top cell of $M(8,v_1^8)$.}\label{fig:ExtH3896}
\end{figure}

Figure~\ref{fig:ExtH3896} displays $\Ext_{A(2)_*}(H(8,v_1^8))$ in the range 96-145.  In this figure, classes coming from $h_{2,1}$-towers starting in dimensions below 96 are labeled with $\times$'s.  We have circled the classes we are interested in, which detect lifts of certain homotopy elements to the top cell of $M(8,v_1^8)$.  These are determined using the differentials in the MASS for $M(8)$ and $M(8,v_1^8)$ using the geometric boundary theorem \cite[Lem.~A.4.1.]{goodEHP}.  We explain how this is done in one example.

\begin{ex}
Consider the class $v_2^{16}\epsilon = \Delta^4\epsilon$ in $\pi_{104}\tmf$.  It lifts to an element
$$ v_2^{16}\epsilon[1] \in \tmf_{105}M(8) $$
which is detected by the class
$$ v_2^{16}c_0[1] \in \Ext^{105+21,21}_{A(2)_*}(H(8)) $$
in the MASS for $\tmf \wedge M(8)$.
Now lift $v_2^{16}\epsilon[1]$ to an element
$$ v_2^{16}\epsilon[18] \in \tmf_{122}M(8,v_1^8). $$
We wish to determine an element of
$$ \Ext_{A(2)_*}(H(8,v_1^8)) $$
which detects one of these lifts in the MASS.  In the MASS for $\tmf \wedge M(8)$, there is a differential  
$$ d_3(\Delta^4 v_1^4 e_0[1]) = \Delta^4 v_1^8 c_0[1]. $$
It follows from the geometric boundary theorem that $v_2^{16}\epsilon[18]$ is detected by $\Delta^4 v_1^4 e_0[1]$ in the MASS for $\tmf \wedge M(8,v_1^8)$. 
\end{ex}

\section{The algebraic $\tmf$-resolution for $M(8,v_1^8)$}\label{sec:tmfres}

\begin{figure}
\includegraphics[angle = 90, origin=c, height =.7\textheight]{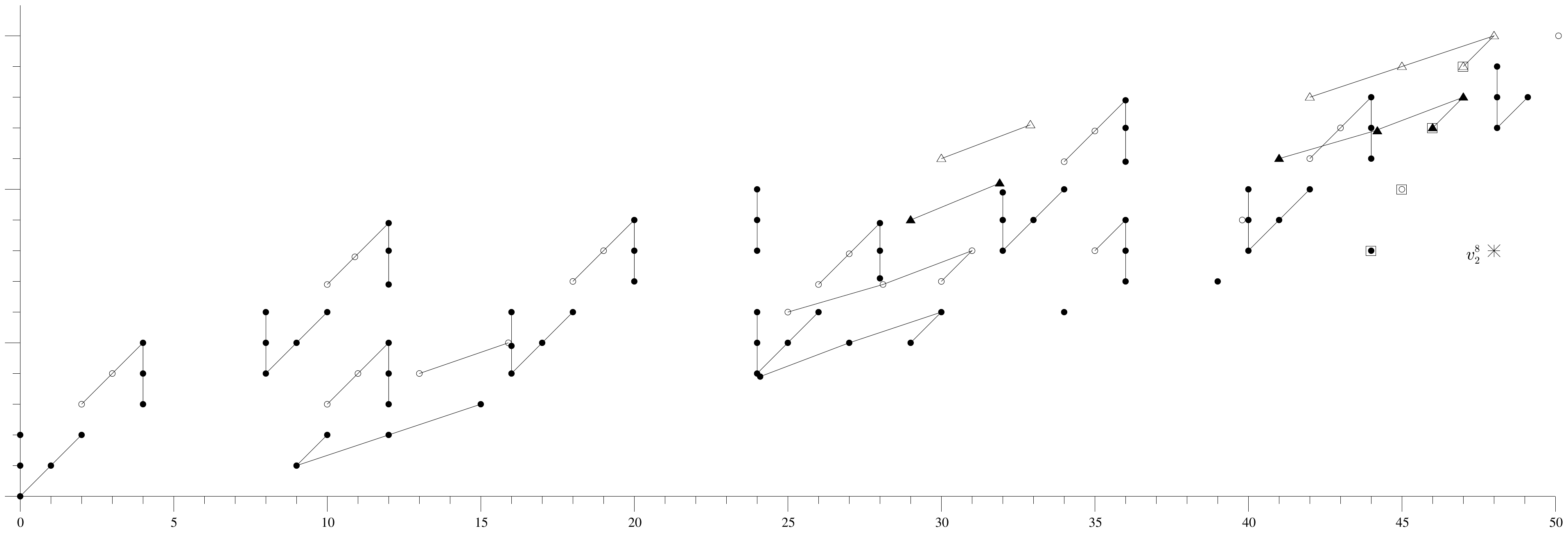}
\caption{$\Ext_{A(2)_*}(\bou_1 \otimes H(8,v_1^8))$.}\label{fig:Extbo1H38}
\end{figure}

For $n > 0$, and $i_1, \ldots, i_n > 0$, the terms
$$ \Ext_{A(2)_*}^{s,t}(\bou_{i_1} \otimes \cdots \otimes \bou_{i_n} \otimes H(8,v_1^{8})) $$
that comprise the terms in the algebraic $\tmf$-resolution for $H(8,v_1^8)$
are in some sense less complicated than $\Ext_{A(2)_*}(H(8,v_1^{8}))$.

Most of the features of these computations can already be seen in the computation of $\Ext_{A(2)_*}(\bou_1 \otimes H(8,v_1^8))$, which is displayed in Figure~\ref{fig:Extbo1H38}.  This computation was performed by taking the computation of $\Ext_{A(2)_*}(\bou_1)$ (see, for example, \cite{BHHM}) and running the long exact sequences in $\Ext$ associated to the cofiber sequences
\begin{gather*}
\Sigma^3 \bou_1 [-3] \xrightarrow{h_0^3} \bou_1 \rightarrow \bou_1 \otimes H(8), \\
\Sigma^{24}\bou_1 \otimes H(8)[-8] \xrightarrow{v_1^8} \bou_1 \otimes H(8) \rightarrow \bou_1 \otimes H(8,v_1^8). 
\end{gather*}
In Figure~\ref{fig:Extbo1H38}, as before, solid dots represent generators carried by the 0-cell of $H(8,v_1^8)$ and open circles are carried by the 1-cell.  Unlike the case of $\Ext_{A(2)_*}(H(8))$, there is $v_1^8$-torsion in $\Ext_{A(2)_*}(\bou_1 \otimes H(8))$.  This results in classes in $ \Ext_{A(2)_*}(\bou_1 \otimes H(8,v_1^8)) $ 
carried by the 17-cell and the 18-cell of $H(8,v_1^8)$, which are represented by solid triangles and open triangles, respectively.  A box around a generator indicates that that generator actually carries a copy of $\FF_2[h_{2,1}]$.  As before, everything is $v_2^8$-periodic.

One can similarly compute
$$ \Ext_{A(2)_*}(\bou_1^{\otimes k} \otimes H(8,v_1^8)) $$
for larger values of $k$ by applying the same method to the corresponding computations of 
$$ \Ext_{A(2)_*}(\bou_1^{\otimes k}) $$
in \cite{BHHM}.  We do not bother to record the complete results of these computations for small values of $k$, but will freely use them in what follows.  The sequences of spectral sequences $\ref{eq:boIss}$ imply these computations control $\Ext_{A(2)_*}(\bou_I)$.

In this section and the next, if
$$ \Ext_{A(2)_*}^{s,t}(\bou_{i_1} \otimes \cdots \otimes \bou_{i_n} \otimes H(8,v_1^{8})) $$
has a unique non-zero element, we shall refer to it as 
$$ x_{t-s,s}(i_1,\ldots,i_n). $$
We will now prove the following.

\begin{prop}\label{prop:tmfresPC}
The elements
$$ \ul{v_2^{16}\nu^2}[18], \:\ul{v_2^{16}\epsilon}[18],  \:\ul{v_2^{16}\kappa}[18], \:\ul{v_2^{16} \bar\kappa \eta^2}[18] \in \Ext^{s,t} _{A(2)}(H(8,v_1^8)) $$
of Figure~\ref{fig:ExtH3896} are permanent cycles in the algebraic $\tmf$-resolution for $H(8,v_1^8)$.
\end{prop}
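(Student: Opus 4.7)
The strategy is to reduce to showing that certain simpler classes are permanent cycles, exploiting multiplicativity of the algebraic $\tmf$-resolution. By the corollary to the weak-ring-structure proposition in Section~\ref{sec:M38}, this resolution is a spectral sequence of (possibly non-associative) algebras, so differentials obey the Leibniz rule. By Lemma~\ref{lem:v2^8}, $v_2^8$ is a permanent cycle, hence so is $v_2^{16} = (v_2^8)^2$; it therefore suffices to prove that each of $\nu^2[18]$, $\epsilon[18]$, $\kappa[18]$, and $\bar{\kappa}\eta^2[18]$ is a permanent cycle.

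For each $x \in \{\nu^2, \epsilon, \kappa, \bar{\kappa}\eta^2\}$, the underlying class lies in $\Ext_{A_*}(\FF_2)$, since it detects an element of $\pi_*S$ (Section~\ref{sec:piS}), and is a fortiori a permanent cycle in the algebraic $\tmf$-resolution for $\FF_2$. The classical relations $h_0 h_1 = 0$, $h_0 h_2 = 0$, $h_0 c_0 = 0$, and $h_0 d_0 = 0$ imply $h_0^3 \cdot x = 0$ in $\Ext_{A_*}(\FF_2)$ for each of the four, so $x$ lifts across the cofiber $\Sigma^3 \FF_2[-3] \xrightarrow{h_0^3} \FF_2 \to H(8)$ to give a class $x[1] \in \Ext_{A_*}(H(8))$. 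Assuming that $v_1^8 \cdot x[1] = 0$ in $\Ext_{A_*}(H(8))$, the long exact sequence for $\Sigma^{24} H(8)[-8] \xrightarrow{v_1^8} H(8) \to H(8,v_1^8)$ provides a further lift $x[18] \in \Ext_{A_*}(H(8,v_1^8))$ whose image in $\Ext_{A(2)_*}(H(8,v_1^8))$ is the class named in the proposition. Being in the image of the edge homomorphism of the algebraic $\tmf$-resolution, $x[18]$ is then a permanent cycle.

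The principal obstacle is verifying that $v_1^8 \cdot x[1] = 0$ in $\Ext_{A_*}(H(8))$ for each of the four choices of $x$. One checks this bidegree-by-bidegree via the algebraic $\tmf$-resolution for $H(8)$ itself: using the identification $\br{A\mmod A(2)}_*^{\otimes n} \cong \bigoplus_I \Sigma^{8|I|} \bou_I$ from Section~\ref{sec:boi}, only multi-indices $I$ of bounded total weight contribute in the relevant bidegree. For each such $I$, the group $\Ext_{A(2)_*}(\bou_I \otimes H(8))$ is determined inductively by the sequences of spectral sequences~(\ref{eq:boIss}) starting from computations of $\Ext_{A(2)_*}(\bou_1^{\otimes k} \otimes H(8))$ analogous to Figure~\ref{fig:Extbo1H38}, with the $A(1)_*$-contributions eliminated by the slope-$1/7$ vanishing line of Section~\ref{sec:boi}. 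This reduces the matter to a finite, if intricate, case analysis, to be carried out explicitly in the four cases at hand.
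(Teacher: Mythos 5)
Your reduction hides the entire difficulty in the sentence beginning ``Assuming that $v_1^8 \cdot x[1] = 0$ in $\Ext_{A_*}(H(8))$'': that hypothesis is never verified, and it is precisely the obstruction the paper's argument is organized to avoid.  For $x = c_0$ (the $\epsilon$ case) it already fails at the $A(2)$ level: in the MASS for $\tmf \wedge M(8)$ one has $d_3(\Delta^4 v_1^4 e_0[1]) = \Delta^4 v_1^8 c_0[1]$, so $v_1^8 c_0[1] \neq 0$ in $\Ext_{A(2)_*}(H(8))$, and since restriction $\Ext_{A_*} \to \Ext_{A(2)_*}$ commutes with the $v_1^8$ self-map, no lift of $c_0$ mapping to this class can be $v_1^8$-torsion over $A_*$ either.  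This is also why your identification of the circled classes of Figure~\ref{fig:ExtH3896} with products $v_2^{16}\cdot x[18]$ is not correct: as the example in Section~\ref{sec:tmfM38} shows, $\ul{v_2^{16}\epsilon}[18]$ is the class $\Delta^4 v_1^4 e_0[1]$, carried by the $1$-cell and located via the geometric boundary theorem, not a $v_2^{16}$-multiple of a class projecting to the top cell.  So even granting the Leibniz reduction, showing that $\nu^2[18]$, $\epsilon[18]$, $\kappa[18]$, $\bar\kappa\eta^2[18]$ are permanent cycles would not address the classes the proposition is about; and the remark immediately following the corollary to Proposition~\ref{prop:tmfresPC} explicitly flags your route (lift the small classes first, then invoke Lemma~\ref{lem:v2^8}) as running into the same obstruction seen for $v_2^{16}\kappa$, ``but harder to eliminate because one no longer can appeal to $v_2^8$ divisibility.''

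The paper's proof goes in the opposite order: it keeps the $v_2^{16}$-multiplied classes intact and chases differentials directly in the algebraic $\tmf$-resolution.  Using the $\bo$-Brown--Gitler splitting of the $E_1$-page, it checks that $\Ext_{A(2)_*}(\bou_1^{\otimes k}\otimes H(8,v_1^8))$ for $k = 1,2,3$ contains essentially no possible targets in the relevant bidegrees (Figures~\ref{fig:Extbo196H38}, \ref{fig:Extbo1296H38}, \ref{fig:Extbo1396H38}), that the terms with $k \ge 4$, the remaining $\bou_I$-terms, and the $A(1)$-contributions lie beyond vanishing lines, and it disposes of the single surviving candidate, $d_1(\ul{v_2^{16}\kappa}[18]) = x_{120,26}(1)$, by writing the differential as $v_2^{16}d_1(d_0[18])$ via Lemma~\ref{lem:v2^8} and noting that $x_{120,26}(1)$ is not $v_2^8$-divisible.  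In other words, $v_2^{16}$-divisibility is used to kill the obstruction, not stripped off at the outset as in your proposal.  To repair your argument you would have to actually establish the four lifting statements in $\Ext_{A_*}(H(8))$ (or find detecting classes on lower cells and control their behavior), which is exactly the hard computation the paper deliberately sidesteps.
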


\begin{cor}
The elements
$$ \ul{v_2^{16}\nu^2}[18], \ul{v_2^{16}\epsilon}[18],  \ul{v_2^{16}\kappa}[18], \ul{v_2^{16} \bar\kappa \eta^2}[18] \in \Ext^{s,t} _{A(2)}(H(8,v_1^8)) $$
lift to elements of $\Ext_{A_*}(H(8,v_1^8))$.
\end{cor}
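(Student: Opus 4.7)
The plan is to verify that each of the four classes $\ul{x}[18]$ (with $x \in \{v_2^{16}\nu^2, v_2^{16}\epsilon, v_2^{16}\kappa, v_2^{16}\bar\kappa\eta^2\}$) supports no nontrivial differential in the algebraic $\tmf$-resolution, by ruling out all possible targets. A class on the $0$-line at $(t-s,s)$ supports a $d_r$ landing on the $r$-line at $(t-s+r-1,\,s+1-r)$, and by the decomposition (\ref{eq:E1decomp}) the $r$-line of the $E_1$-page is
\[
 \bigoplus_{i_1,\ldots,i_r>0}\Ext^{s+1-r,\,t}_{A(2)_*}\bigl(\Sigma^{8(i_1+\cdots+i_r)}\bou_{i_1}\otimes\cdots\otimes\bou_{i_r}\otimes H(8,v_1^8)\bigr).
\]
So the task reduces to showing each of these summands vanishes in the relevant bidegree for every $r\geq 1$ and every multi-index $I=(i_1,\ldots,i_r)$.

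First I would dispose of large $r$ using the connectivity statement from Section~\ref{sec:boi}: the $r$-line has its bottom cell at $(t-s,s)=(7r,r)$, so once $s+1-r<r$ (together with the Adams-line slope bound) all potential targets are forced into a region where $\Ext_{A(2)_*}(\bou_I\otimes H(8,v_1^8))$ is trivial. Moreover, the final lemma of Section~\ref{sec:boi} says the $\Ext_{A(1)_*}$ contributions coming from the spectral sequences (\ref{eq:boIss}) can be ignored above $s=(t-s)/7+51/7$, which is the region our four classes sit in. This leaves only a small explicit range of $r$ (roughly $r\le 9$ for the first class, similarly bounded for the others) to check.

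For each remaining $r$, only finitely many multi-indices $I$ satisfy $8|I|\leq t-s+r-1$, and for each such $I$ the iterated spectral sequence (\ref{eq:boIss}) expresses $\Ext_{A(2)_*}(\bou_I\otimes H(8,v_1^8))$ in terms of $\Ext_{A(2)_*}(\bou_1^{\otimes m}\otimes H(8,v_1^8))$ for $m$ bounded by the total number of $1$'s in the dyadic expansions of the $i_j$, using the polynomials $f_I$ introduced in Section~\ref{sec:boi}. The building blocks $\Ext_{A(2)_*}(\bou_1^{\otimes m}\otimes H(8,v_1^8))$ are obtained from the known $\Ext_{A(2)_*}(\bou_1^{\otimes m})$ of \cite{BHHM} by running the long exact sequences attached to $h_0^3$ and $v_1^8$ (compare Figure~\ref{fig:Extbo1H38} for $m=1$). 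The verification then amounts to reading off that the target bidegrees are empty in these charts.

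The main obstacle is the case $r=1,2,3$: here several summands contribute, and because our four classes are relatively deep inside the $v_2$-periodic region (rather than sitting against an obvious vanishing edge), the absence of targets must be checked by hand rather than by a coarse slope estimate. One must be particularly careful with summands where $I$ contains large $i_j$, since the associated $\bou_I$'s can have $v_2$-periodic $h_{2,1}$-towers that a priori could produce targets; the reduction via the $f_I$-polynomial and the exact sequences (\ref{eq:boSES1})--(\ref{eq:boSES2}) is exactly what constrains those towers and ultimately kills the potential targets. Once the finite list of candidate summands is seen to vanish bidegree-by-bidegree (analogously to the permanent cycle arguments for $v_2^8$ in Lemma~\ref{lem:v2^8} and to the calculations of \cite{BHHM,BOSS}), the proposition, and hence the corollary, follows.
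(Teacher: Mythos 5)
Your overall strategy coincides with the paper's: treat the four classes as elements of the $n=0$ line of the algebraic $\tmf$-resolution and rule out differentials by examining $\Ext_{A(2)_*}(\bou_{i_1}\otimes\cdots\otimes\bou_{i_n}\otimes H(8,v_1^8))$ in the relevant bidegrees, using the connectivity of the $n$-line, the lemma discarding $\Ext_{A(1)_*}$-contributions, and the inductive reduction of $\bou_I$ to $\bou_1^{\otimes m}$ via the sequences (\ref{eq:boSES1})--(\ref{eq:boSES2}). However, there is a genuine gap in your claim that ``the verification then amounts to reading off that the target bidegrees are empty in these charts.'' That is false for one of the four classes: in $\Ext_{A(2)_*}(\bou_1\otimes H(8,v_1^8))$ there is a nonzero class $x_{120,26}(1)$ sitting exactly in the bidegree of a potential $d_1$ on $\ul{v_2^{16}\kappa}[18]$ (see Figure~\ref{fig:Extbo196H38}), so no amount of chart inspection alone can close the argument for that class, and your plan as stated would stall there.

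The missing ingredient is the multiplicative argument the paper deploys at precisely this point: the algebraic $\tmf$-resolution is a spectral sequence of algebras, and Lemma~\ref{lem:v2^8} shows $v_2^8$ is a permanent cycle lifting to $\Ext_{A_*}(H(8,v_1^8))$. Since $\kappa$ is detected by $d_0[18]$, which already lives in the $n=0$ term, one gets $d_1(\ul{v_2^{16}\kappa}[18]) = v_2^{16}\,d_1(d_0[18])$, and because the candidate target $x_{120,26}(1)$ is not $v_2^8$-divisible, the differential must vanish. Your passing reference to Lemma~\ref{lem:v2^8} as an ``analogy'' for the general method does not supply this step, and without it the proposition, hence the corollary, is not established. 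The remainder of your outline (bounding the resolution degree by connectivity and the slope-$1/7$ vanishing line, reducing to finitely many multi-indices, and the vanishing-edge estimates for $\bou_1^{\otimes k}$ with $k\ge 4$ and for $\bou_I$ with larger $i_j$) does track the paper's analysis correctly.
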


\begin{proof}[Proof of Proposition~\ref{prop:tmfresPC}]
We begin by enumerating the targets of possible differentials supported by these classes in the algebraic $\tmf$-resolution
$$ \Ext_{A(2)_*}^{s,t}(\bou_{i_1} \otimes \cdots \otimes \bou_{i_n} \otimes H(8,v_1^{8})) \Rightarrow \Ext^{s+n, t+8(i_1+\cdots+i_n)}_{A_*}(H(8,v_1^8)). $$
Here, a $d_r$ differential raises $n$ by $r$.  Since the classes in question lie on the $n = 0$ line, the possible targets of differentials will all lie in terms with $n > 0$.
We begin with the ``edge'' case where $i_1 = i_2 = \cdots = i_n = 1$.
\begin{figure}
\includegraphics[angle = 90, origin=c, height =.7\textheight]{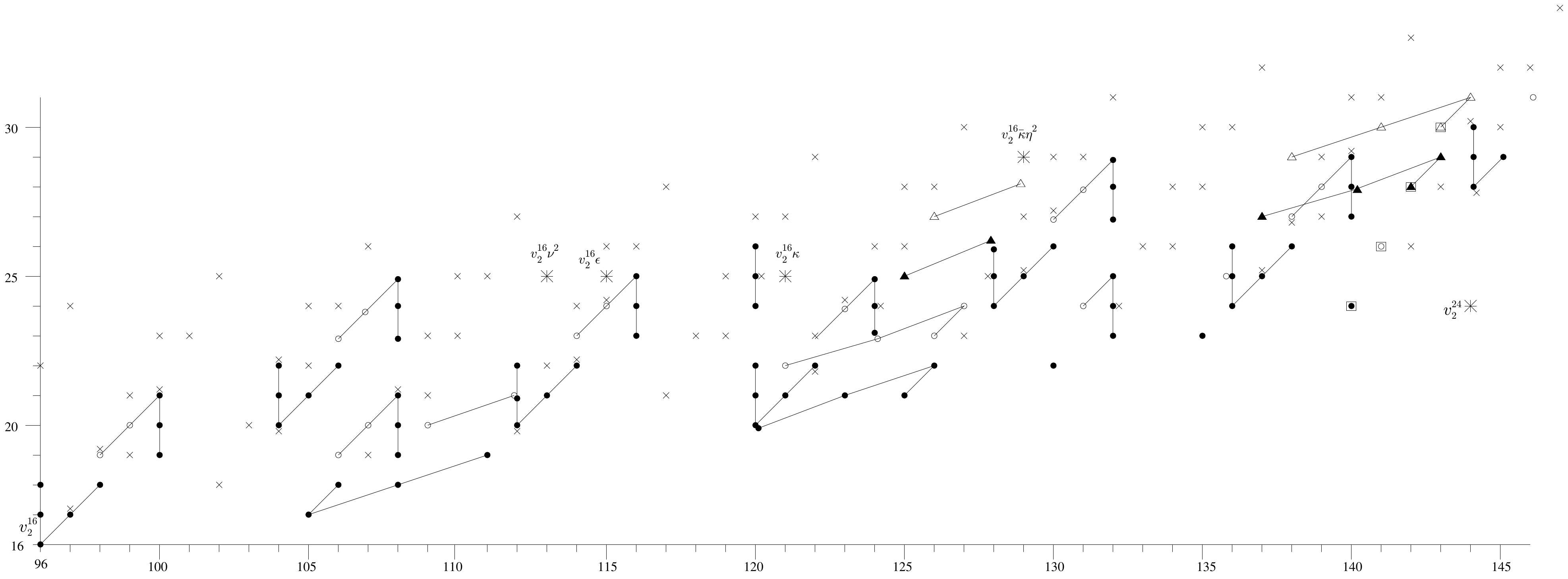}
\caption{$\Ext^{s,t}_{A(2)_*}(\bou_1 \otimes H(8,v_1^8))$ in the region $96 \le t-s \le 144$.}\label{fig:Extbo196H38}
\end{figure}

The first term to consider is $\Ext^{s,t}_{A(2)_*}(\bou_1 \otimes H(8,v_1^8))$, displayed in Figure~\ref{fig:Extbo196H38}.  These Ext groups can be easily determined from Figure~\ref{fig:Extbo1H38} by multiplying everything by $v_2^{16}$, and propagating the $h_{2,1}$-towers, and their $v_2^8$-multiples.  The classes belonging to $h_{2,1}$-towers are labeled with $\times$'s.  In this figure, we have indicated the relative position of the classes from the $n = 0$-term of the algebraic $\tmf$-resolution we wish to analyze with $\mathrlap{+}\times$'s.  We have set things up so that targets of differentials in the algebraic $\tmf$-resolution look like Adams $d_1$-differentials.  For example, we have  
$$ d_1(\ul{v_2^{16}\nu^2}[18]) \in \Ext^{26, 112+26}_{A(2)_*}(\bou_1 \otimes H(8,v_1^8))$$
and there are no non-trivial targets for such a differential.
Actually, the only possibility for a non-trivial $d_1$ on the classes in question hitting a class in $\Ext_{A(2)_*}(\bou_1\otimes H(8,v_1^8))$ is
$$ d_1(\ul{v_2^{16}\kappa}[18]) = x_{120,26}(1) \in \Ext^{26,120+26}_{A(2)_*}(\bou_1 \otimes H(8,v_1^8)). $$
However, since $\ul{\kappa} = d_0 \in \Ext_{A(2)_*}(\FF_2)$ lifts to a class
$$ d_0[18] \in \Ext_{A(2)_*}(H(8,v_1^8)), $$
using Lemma~\ref{lem:v2^8} we have
$$ d_1(\ul{v_2^{16}\kappa}[18]) = v_2^{16}d_1(d_0[18]). $$
Since $x_{120,26}(1)$ is not $v_2^{8}$ divisible, we deduce that it cannot be the target of such a differential.

\begin{figure}
\includegraphics[width =\textwidth]{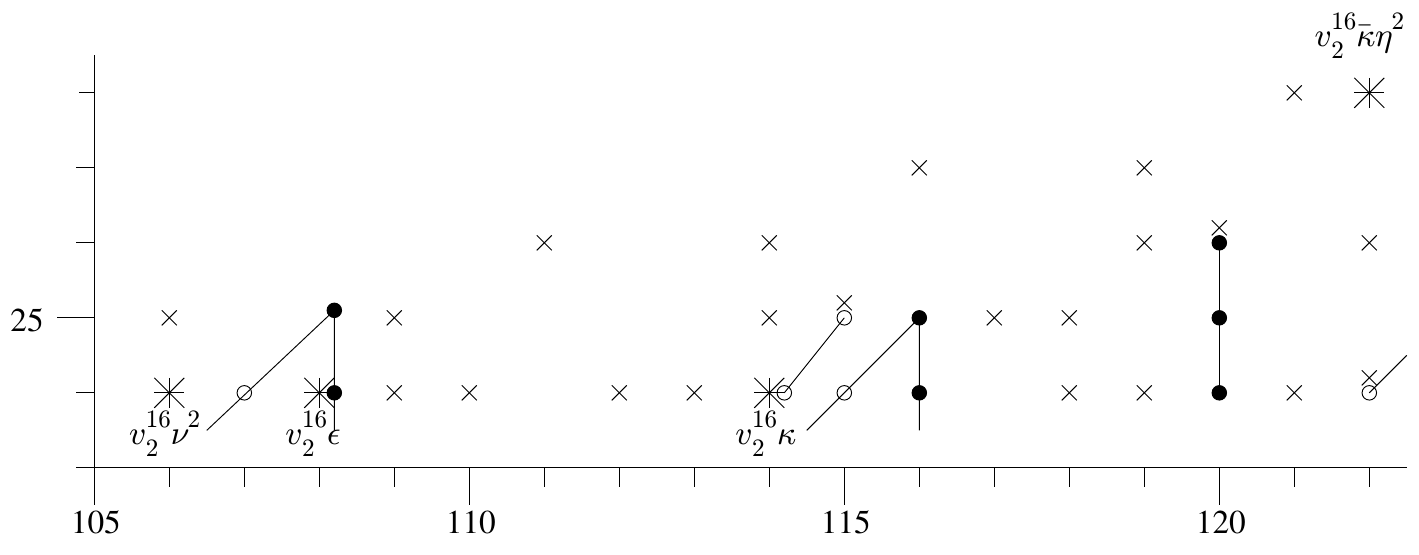}
\caption{Potential targets of differentials in $\Ext^{s,t}_{A(2)_*}(\bou_1^{\otimes 2} \otimes H(8,v_1^8))$}\label{fig:Extbo1296H38}
\end{figure}

\begin{figure}
\includegraphics[width =\textwidth]{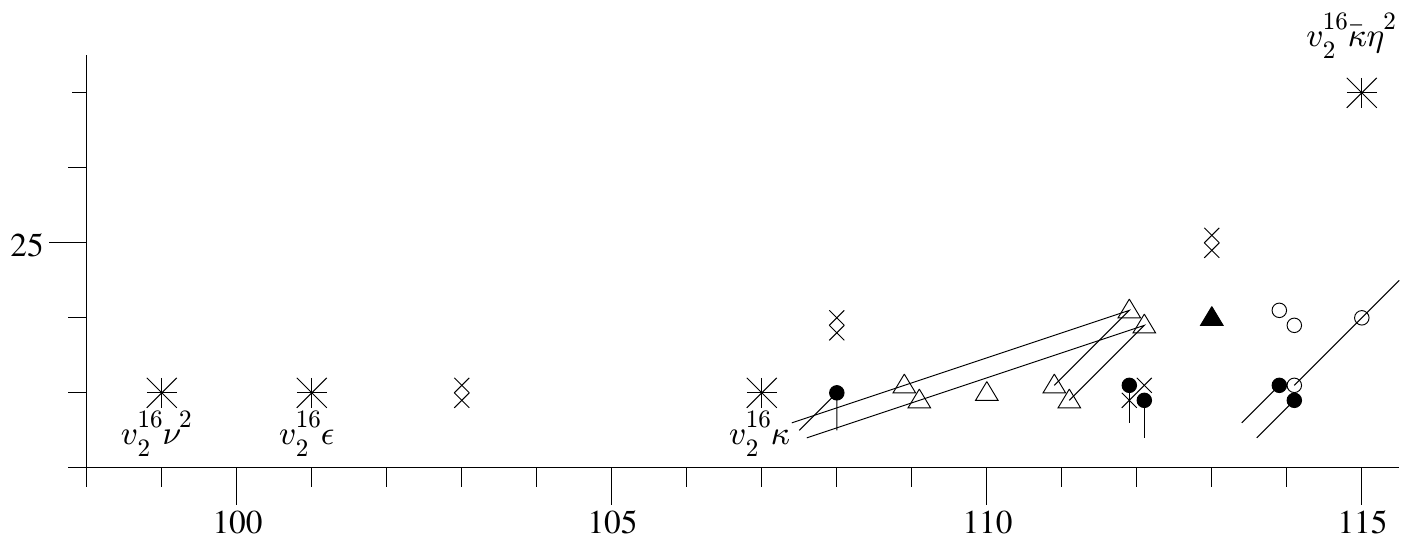}
\caption{Potential targets of differentials in $\Ext^{s,t}_{A(2)_*}(\bou_1^{\otimes 3} \otimes H(8,v_1^8))$}\label{fig:Extbo1396H38}
\end{figure}

Figures~\ref{fig:Extbo1296H38} and \ref{fig:Extbo1396H38} display the corresponding targets of potential $d_1$-differentials coming from the terms
$$ \Ext_{A(2)_*}(\bou_1^{\otimes 2} \otimes H(8,v_1^8)), \quad \Ext_{A(2)_*}(\bou_1^{\otimes 3} \otimes H(8,v_1^8)) $$
in the algebraic $\tmf$-resolution.  As the figures indicate, there are no possible non-zero targets of such $d_1$ differentials.  One finds that there are no contributions from
$$ \Ext_{A(2)_*}(\bou_1^{\otimes k} \otimes H(8,v_1^8)) $$
for $k \ge 4$ since the classes in question lie past the vanishing edge of these Ext groups.  

An elementary analysis, using the technology of Section~\ref{sec:boi}, shows that the classes lie beyond the vanishing edge of all of the other terms in the algebraic $\tmf$-resolution.
\end{proof}

\begin{rmk}
One could also attempt to prove Proposition~\ref{prop:tmfresPC} by showing the classes $\nu^2$, $\epsilon$, $\kappa$, and $\bar{\kappa}\eta^2$ lift to the top cell of $M(8,v_1^8)$, and then use Lemma~\ref{lem:v2^8}.  However, the obstruction that showed up in the case of $v_2^{16}\kappa$ also shows up in the case of $\kappa$, but is harder to eliminate because one no longer can appeal to $v_2^8$ divisibility.
\end{rmk}

\section{The MASS for $M(8,v_1^8)$}\label{sec:MASS}

We are now in a position to proof our main result.

\begin{thm}\label{thm:MASSPCs}
The elements
$$ \ul{v_2^{16}\nu^2}[18], \: \ul{v_2^{16}\epsilon}[18],  \: \ul{v_2^{16}\kappa}[18], \: \ul{v_2^{16} \bar\kappa \eta^2}[18] \in \Ext^{s,t} _{A_*}(H(8,v_1^8)) $$
of Figure~\ref{fig:ExtH3896} are permanent cycles in the MASS for $M(8,v_1^8)$.
\end{thm}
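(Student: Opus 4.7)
The plan is to combine naturality along the $\tmf$-Hurewicz map with the enumeration of targets already carried out in Section~\ref{sec:tmfres}. Denote by $\ul{\alpha}[18]$ any of the four classes in the statement. The map of MASS's induced by $S \to \tmf$ sends $\ul{\alpha}[18]$ to a class of the same name in $\Ext_{A(2)_*}(H(8,v_1^8))$, and by Figures~\ref{fig:MASSM38} and \ref{fig:ExtH3896} each such image is a permanent cycle in the MASS for $\tmf \wedge M(8,v_1^8)$ detecting $v_2^{16}\alpha \in \tmf_*M(8,v_1^8)$. Consequently, if $\ul{\alpha}[18]$ supports a nonzero MASS differential $d_r$ with target $\beta$, then $\beta$ must lie in the kernel of the edge map $\Ext_{A_*}(H(8,v_1^8)) \to \Ext_{A(2)_*}(H(8,v_1^8))$; equivalently, $\beta$ is detected in the filtration $n \ge 1$ part of the algebraic $\tmf$-resolution.

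I would then reuse the enumeration from Proposition~\ref{prop:tmfresPC}: Figures~\ref{fig:Extbo196H38}--\ref{fig:Extbo1396H38} catalog the relevant classes in $\Ext_{A(2)_*}(\bou_1^{\otimes k} \otimes H(8,v_1^8))$ for $k = 1, 2, 3$; the cases $k \ge 4$ are killed by the vanishing edge; and general multi-indices $I$ reduce to these via the spectral sequences (\ref{eq:boIss}) together with the slope-$1/7$ lemma of Section~\ref{sec:boi}. For each surviving candidate $\beta$ I would then rule out a differential $d_r(\ul{\alpha}[18]) = \beta$ using multiplicativity of the MASS (the corollary in Section~\ref{sec:M38}) together with Lemma~\ref{lem:v2^8}: since $v_2^8$ is a permanent cycle and $\ul{\alpha}[18] = v_2^{16}\cdot \ul{\alpha'}[18]$ for some $\alpha' \in \{\nu^2, \epsilon, \kappa, \bar\kappa\eta^2\}$, any $d_r$ on $\ul{\alpha}[18]$ must land in a $v_2^{16}$-divisible class. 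This is precisely the $v_2^8$-divisibility obstruction employed for the $\ul{v_2^{16}\kappa}[18]$ case in Proposition~\ref{prop:tmfresPC}, and it should dispose of the bulk of candidate targets at one stroke, since the $\bou_I$-classes identified in Figures~\ref{fig:Extbo196H38}--\ref{fig:Extbo1396H38} are generically not $v_2^8$-divisible.

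The main obstacle will be the handful of candidate targets $\beta$ that are both $v_2^8$-divisible and in a bidegree not manifestly empty. For these I would trace the hypothetical differential back via the geometric boundary theorem to a differential in the MASS for $S$ or $M(8)$, as was done in Section~\ref{sec:tmfM38}, and show the resulting source/target configuration is inconsistent either with the computed MASS for $\tmf\wedge M(8,v_1^8)$ (via naturality once more) or with Bruner's Ext computations in the sphere \cite{BrunerExt}. Of the four classes, $\ul{v_2^{16}\bar\kappa\eta^2}[18]$ sits highest in the $(t-s)$ range displayed in Figure~\ref{fig:ExtH3896} and thus has the smallest margin against the vanishing line of the algebraic $\tmf$-resolution, so I expect it to require the most delicate case-by-case treatment.
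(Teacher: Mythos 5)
Your opening move agrees with the paper: naturality along $M(8,v_1^8) \to \tmf \wedge M(8,v_1^8)$ shows a hypothetical differential cannot be detected on the $n=0$ line of the algebraic $\tmf$-resolution, since the classes are already known to exist in $\tmf_*M(8,v_1^8)$; and the remaining targets are hunted down inside $\Ext_{A(2)_*}(\bou_{i_1}\otimes\cdots\otimes\bou_{i_n}\otimes H(8,v_1^8))$ as in Proposition~\ref{prop:tmfresPC}. The gap is in your central mechanism for excluding those remaining targets. The claim that any MASS differential on $\ul{v_2^{16}\alpha}[18]$ must land in a $v_2^{16}$-divisible class needs two inputs you do not have: (i) a genuine factorization $\ul{v_2^{16}\alpha}[18]=v_2^{16}\cdot\ul{\alpha}[18]$ in $\Ext_{A_*}(H(8,v_1^8))$, which presupposes that $\ul{\nu^2}[18]$, $\ul{\kappa}[18]$, etc.\ themselves lift to $\Ext_{A_*}(H(8,v_1^8))$ --- precisely the lifting problem that the remark following Proposition~\ref{prop:tmfresPC} warns is obstructed (for $\kappa$ the same obstruction reappears and can no longer be removed by appealing to $v_2^8$-divisibility); and (ii) that $v_2^{16}$ is a $d_r$-cycle in the MASS itself, so that the Leibniz rule gives $d_r(v_2^{16}y)=v_2^{16}d_r(y)$. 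Lemma~\ref{lem:v2^8} only produces $v_2^8$ as an element of the MASS $E_2$-page (a permanent cycle of the \emph{algebraic} $\tmf$-resolution); it says nothing about MASS differentials on it, and no $v_2^8$- or $v_2^{16}$-self map of $M(8,v_1^8)$ is available --- indeed, showing that $v_2^{16}$-multiples survive is the very content of the theorem. So the divisibility argument cannot be run, and the assertion that it kills ``the bulk'' of candidate targets is unsupported.

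Moreover, the residual candidates are left essentially untreated. Since a MASS $d_r$ raises Adams filtration by $r$ (rather than raising the resolution degree $n$ as in Proposition~\ref{prop:tmfresPC}), the relevant classes in $\Ext_{A(2)_*}(\bou_1\otimes H(8,v_1^8))$ occur in bidegrees different from those already excluded there; redoing the enumeration leaves two genuine possibilities, namely $d_2(\ul{v_2^{16}\nu^2}[18])=x_{112,27}(1)$ and $d_2(\ul{v_2^{16}\kappa}[18])=x_{120,27}(1)$ (so the delicate cases are $\nu^2$ and $\kappa$, not $\bar\kappa\eta^2$), and your fallback --- tracing back via the geometric boundary theorem and consulting Bruner's tables --- is a plan, not an argument. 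The paper closes these cases by a different multiplicative device: with $g=h_{2,1}^4$ the sources are $g$-torsion (no higher algebraic $\tmf$-filtration term can detect a $g$-tower on them), while the targets are $g$-torsion free, and this asymmetry is incompatible with a differential by the Leibniz rule for $g$. Some argument of this kind is needed to complete your proof.
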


\begin{proof}
We analyze the potential targets of Adams differentials supported by these elements $x$ using the algebraic $\tmf$-resolution.  We can rule out any possible contributions from
$$ \Ext_{A(2)_*}(H(8,v_1^8)) $$
because if $d_r(x)$ is detected by $\Ext_{A(2)_*}(H(8,v_1^8))$, the map of MASS's induced by the map
$$ M(8,v_1^8) \rightarrow \tmf \wedge M(8,v_1^8) $$
would result in a corresponding non-trivial differential in the MASS for $\tmf \wedge M(8,v_1^8)$.  However, we already know that these classes $x$ exist in $\tmf_*M(8,v_1^8)$.  We are left with showing that $d_r(x)$ cannot be detected by
$$ \Ext_{A(2)_*}(\bou_{i_1} \otimes \cdots \otimes \bou_{i_n} \otimes H(8,v_1^8)) $$
in the algebraic $\tmf$-resolution.  Consulting the analysis of the proof of Proposition~\ref{prop:tmfresPC}, we see that the only possible targets of $d_r(x)$ could be detected in
$$ \Ext_{A(2)_*}(\bou_1 \otimes H(8,v_1^8)). $$
By Figure~\ref{fig:Extbo196H38}, these possibilities are:
\begin{align*}
d_2(\ul{v_2^{16}\nu^2}[18]) = x_{112,27}(1), \\
d_2(\ul{v_2^{16}\kappa}[18]) = x_{120,27}(1).
\end{align*}
In each of these two cases, the source is $g = h_{2,1}^4$-torsion, while the target is $g$-torsion free.
The source is checked to be $g$-torsion by checking there are no terms in higher filtration in the algebraic $\tmf$-resolution which could detect a $g$-tower supported by the source.  The target is checked to be $g$-torsion free, as the only possible way for a $g$-tower on the target to be truncated in the algebraic $\tmf$-resolution would be for it to be killed by a $g$-tower in $\Ext_{A(2)_*}(H(8,v_1^8))$.  The only possible $g$-towers which could do such a killing originate in stems smaller than the stems of the targets in question, and would therefore kill the targets themselves.
\end{proof}

\section{Tabulation of some non-trivial elements in Coker $J$}\label{sec:cokerJ}

We now use the $v_2$-periodic elements constructed in the previous section, as well as the existing literature on low dimensional computations of the stable stems, to show there exist non-trivial elements in $\coker J$ of Kervaire invariant $0$ in dimensions congruent to $0,-2,$ and $4$ modulo $8$ in most dimensions less than $140$.  We also summarize tentative results for dimensions $141-200$.

\subsection*{Results in dimensions less than 140}
 
Tables~\ref{tab1} (respectively~\ref{tab2}) list non-zero elements in $(\coker J_{4k})_{(p)}$ (respectively non-zero elements in $(\coker J_{8k-2})_{(p)}$ with Kervaire invariant $0$).
A $0$ entry indicates the group is known to be zero.  

\begin{table}
\begin{tabular}{c|c|c|c}
$4k$ & $p = 2$ & $p = 3$ & $p=5$ \\
\hline
4	&	0	&	0	&	0	\\
8	&	$\pmb{\epsilon}$	&	0	&	0	\\
12	&	0	&	0	&	0	\\
16	&	$\eta_4$	&	0	&	0	\\
20	&	$\pmb{\bar{\kappa}}$	&	$\beta_1^2$	&	0	\\
24	&	$\eta_4 \epsilon$	&	0	&	0	\\
28	&	$\pmb{\epsilon \bar{\kappa}}$	&	0	&	0	\\
32	&	$\pmb{\{q\}}$	&	0	&	0	\\
36	&	$\{ t \}$	&	$\beta_2 \beta_1$	&	0	\\
40	&	$\pmb{\bar{\kappa}^2}$	&	$\beta_1^4$	&	0	\\
44	&	$\{ g_2 \}$	&	0	&	0	\\
48	&	$\pmb{\kappa^2\bar{\kappa}}$  &	0	&	0	\\
52	&	$\pmb{\bar{\kappa} \{ q\}}$	&	$\beta_2^2$	&	0	\\
56	&	0	&	0	&	0	\\
60	&	$\pmb{\bar{\kappa}^3}$	&	0	&	0	\\
64	&	$\eta_6$	&	0	&	0	\\
68	&		&	$\langle \alpha_1, \beta_{3/2},\beta_2 \rangle$	&	0	\\
72	&		&	$\beta_2^2 \beta_1^2$	&	0	\\
76	&		&	0	&	$\beta_1^2$	\\
80	&	$\pmb{\bar{\kappa}^4}$	&	0	&	0	\\
84	&		&	$\beta_5 \beta_1$	&	0	\\
88	&	$\{ g_2^2 \}$	&	0	&	0	\\
92	&		&	$\beta_{6/3} \beta_1$	&	0	\\
96	&	$\eta_6 \{d_1\}$	&	0	&	0	\\
100	&	$\pmb{\bar{\kappa}^5}$	&	$\beta_2 \beta_5$	&	0	\\
104	&	$\pmb{v_2^{16} \epsilon}$	&		&	0	\\
108	&	$\eta_6 \{g_2\}$	&		&	0	\\
112	&		&	$\beta_{6/3} \beta_1^3$	&	0	\\
116	&	$\pmb{(v_2^{16} \kappa)\nu^2}$	&		&	0	\\
120	&	$\pmb{\bar\kappa^6}$	&		&	0	\\
124	&		&		&	$\beta_2 \beta_1$	\\
128	&	$\eta_7$	&		&	0	\\
132	&	$\{ h_2 h_6^2\} \nu$	&		&	0	\\
136	&	$\eta_7\epsilon$	&		&	0	\\
\end{tabular}
\caption{Some non-trivial elements of $(\coker J_{4k})_{(p)}$ in low degrees.}\label{tab1}
\end{table}

\begin{table}
\begin{tabular}{c|c|c|c}
$8k-2$ & $p = 2$ & $p = 3$ & $p=5$ \\
\hline
6	&	0	&	0	&	0	\\
14	&	$\pmb{\kappa}$	&	0	&	0	\\
22	&	$\pmb{\epsilon \kappa}$	&	0	&	0	\\
30	&	0	&	$\beta_1^3$	&	0	\\
38	&	$\{h_1 x\}$	&	$\beta_{3/2}$	&	$\beta_1$	\\
46	&	$\pmb{\{w\} \eta}$	&	$\beta_2 \beta_1^2$	&	0	\\
54	&	$\pmb{\bar{\kappa}^2\kappa}$	&	0	&	0	\\
62	&		&	$\beta_2^2 \beta_1$	&	0	\\
70	&	$\pmb{\langle \bar{\kappa} \{w\},\nu,\eta \rangle}$ &	0	&	0	\\
78	&		&	$\beta_2^3$	&	0	\\
86	&		&	$\beta_{6/2}$	&	$\beta_2$	\\
94	&		&	$\beta_5$	&	0	\\
102	&	$\pmb{v_2^{16} \nu^2}$	&	$\beta_{6/3} \beta_1^2$	&	0	\\
110	&	$\pmb{v_2^{16} \kappa}$	&		&	0	\\
118	&	$\pmb{(v_2^{16} \kappa)\epsilon}$	&		&	0	\\
126 &  $\bra{\theta_5, 2, \{X_2+C'\}}$ or $\theta_5\eta_6$ &		&	0	\\
134 & & & $\beta_3$ \\
\end{tabular}
\caption{Some non-trivial elements of $(\coker J_{8k-2})_{(p)}$ with Kervaire invariant $0$ in low degrees.}\label{tab2}
\end{table}

As the discussion in the last section indicates, a primary source of these non-zero elements are the $v_2$-periodic elements.  In fact, all of the $3$ and $5$-primary elements in Tables~\ref{tab1} and \ref{tab2} are $v_2$-periodic (in fact, $144$-periodic for $p = 3$ \cite{BehrensPemmaraju} and $48$-periodic for $p = 5$ \cite{Smith}).  For $p = 2$, the classes in boldface are known (or tentatively known) to be 192-periodic, and (with the exception of $\kappa^2 \bar\kappa$ and $\bar\kappa^6$) are detected by $\tmf$ via its Hurewicz homomorphism.

Thus, while we are emphasizing the low dimensional aspects of the subject, in fact the $v_2$-periodic classes are giving exotic spheres in infinitely many dimensions, of certain congruence classes mod $192$, $144$, and $48$.  All in all, in our range, over half of the candidates are coming from $v_2$-periodic classes.

In dimensions less than $60$ for $p = 2$, less than $104$ for $p = 3$, and in all dimensions depicted for $p = 5$, these non-trivial elements can be found in \cite{Isaksen}, \cite{Isaksen} and \cite{Ravenel}. 
The computation $(\pi_*^s)_{(3)}$ in the range up to 103 was first independently done by Nakamura \cite{Nakamura} and Tangora \cite{Tangora}.

Note that for any prime $p > 2$, the first non-trivial element of $(\coker J_*)_{(p)}$ is $\beta_1$ (see, for instance, \cite{Ravenel}), which lies in dimension $2(p^2-1)- 2(p-1)-2$.  The only prime greater than $5$ for which this dimension is less than $200$ is $p = 7$, for which $\abs{\beta_1} = 82$, and this is the only nontrivial $7$-torsion in $\coker J$ in our range.  This adds nothing to the discussion as $82 \equiv 2 \mod 8$.

The $2$-primary elements with names $v_2^{16}x$ for various $x$ were constructed in Section~\ref{sec:MASS}.  These classes are all non-trivial because they are detected in the $\tmf$-Hurewicz homomorphism.  In all fairness, in each of these dimensions except for dimension 118, we could have manually constructed non-trivial classes using certain products and Toda brackets involving $\theta_5$.  However, as we do not know another means of producing a class in dimension 118, we would still have to use the $\tmf$-resolution technique to handle that dimension.  We also have a preference for the classes we construct using the $\tmf$-resolution, as they are all $v_2$-periodic, and are either known or suspected to be $192$-periodic, Hence they actually account for an entire congruence class mod $192$ of dimensions.

Below, we explain the origin and non-triviality of the remaining $2$-primary elements in the tables.  Note that in our range the $E_2$-term of the $2$-primary ASS can be computed using software developed by Bruner \cite{Bruner}, \cite{BrunerExt}.

\begin{description}
\item[dim 60: $\bar\kappa^3$] This class is non-zero in $\pi_*\tmf$, hence non-zero.
\vspace{10pt}

\item[dim 64: $\eta_6$] The existence and non-triviality of this class was established by the fourth author \cite{etaj}.
\vspace{10pt}

\item[dim 70: $\langle \bar\kappa \{w\}, \nu, \eta \rangle$] The element $\{w \} \in \pi_{45}^s$ supports a hidden $\nu$-extension in the ASS for the sphere, so $\nu \{w\}$ is detected by $d_0e_0^2$ \cite[Lem.4.2.71]{Isaksen}.  Thus $\nu \bar\kappa \{w\}$ is detected in Adams filtration greater than or equal to $16$.  However, there are no non-trivial classes in $\pi^s_{68}$ with Adams filtration greater than or equal to $16$ \cite{Isaksenchart}.  This means the proposed Toda bracket exists.  The image of this Toda bracket in $\tmf$ does not contain zero, hence the Toda bracket in the sphere does not contain zero.
\vspace{10pt}

\item[dim 80: $\bar\kappa^4$] This class is non-zero in $\pi_*\tmf$, hence non-zero.
\vspace{10pt}

\item[dim 88: $\{g_2^2\}$] The element $g_2$ is a permanent cycle in the ASS.  Computer computations of $\Ext_{A_*}$ in this range reveal there are no possible sources of a differential to kill $g_2^2$. 
\vspace{10pt}

\item[dim 96: $\eta_6\{d_1\}$] The element $d_1$ is a permanent cycle in the ASS.  Its product with $\eta_6$ is detected in the ASS by the element $h_6h_1d_1$. There are no possible sources of a differential to kill this element.
\vspace{10pt}

\item[dim 100: $\bar\kappa^5$] This class is non-zero in $\pi_*\tmf$, hence non-zero.
\vspace{10pt}

\item[dim 108: $\eta_6\{g_2\}$] This element is detected in the ASS by the element $h_6h_1g_2$. There are no possible sources of a differential to kill this element.
\vspace{10pt}

\item[dim 120: $\bar\kappa^6$] See below.
\vspace{10pt}

\item[dim 126] See below.
\vspace{10pt}

\item[dim 128: $\eta_7$]  The existence and non-triviality of this class was established by the fourth author \cite{etaj}.
\vspace{10pt}

\item[dim 132: $\{h_2 h_6^2\}\nu$] Bruner showed that the classes $h_j^2 h_2$ are permanent cycles in the ASS.  There are no classes in $\Ext$ which can support a nontrivial Adams differential killing $h_2^2 h_6^2$.

\item[dim 136: $\eta_7\epsilon$] This class is detected by $\{c_0 h_7 h_1 \}$ in the ASS.  There are no classes in $\Ext$ which can support a non-trivial Adams differential killing this class.

\end{description}

\subsection*{The non-triviality of $\bar{\kappa}^6$}

Showing $\bar\kappa^6$ is non-zero is slightly tricky, since its image in $\pi_* \tmf$ is actually zero.  However, let $F(3)$ be the fiber
$$ F(3) \rightarrow \TMF \xrightarrow{q^*_3-f^*_3} \TMF_0(3) $$
for the maps $f_3^*$ and $q_3^*$ of Section~\ref{sec:tmf}.  Since $q_3^*$ and $f_3^*$ are $E_\infty$-ring maps, $F(3)$ is an $E_\infty$ ring spectrum, and in particular has a unit.

\begin{prop}
The image of $\bar\kappa^6$ under the map
$$ \pi_* S \rightarrow \pi_* F(3) $$
is non-zero.  In particular, $\bar\kappa^6$ is non-zero in $\pi_*^s$.
\end{prop}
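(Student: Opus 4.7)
The plan is to run the long exact sequence in homotopy groups associated to the defining fiber sequence for $F(3)$:
\[
\cdots \to \pi_{121}\TMF \xrightarrow{q_3^*-f_3^*} \pi_{121}\TMF_0(3) \xrightarrow{\delta} \pi_{120}F(3) \to \pi_{120}\TMF \xrightarrow{q_3^*-f_3^*} \pi_{120}\TMF_0(3).
\]
Since $\bar\kappa^6=0$ in $\pi_{120}\TMF_{(2)}$ — visible from the chart of $\pi_*\tmf_{(2)}$ in Figure~\ref{fig:tmf2} together with the injection $\pi_*\tmf \hookrightarrow \pi_*\TMF$ noted in Section~\ref{sec:tmf} — exactness forces the image of $\bar\kappa^6$ under $\pi_{120}S \to \pi_{120}F(3)$ to lie in $\operatorname{im}\delta$. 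It therefore suffices to exhibit a lift $y \in \pi_{121}\TMF_0(3)$ with $\delta(y)$ equal to the image of $\bar\kappa^6$, together with the fact that $y \notin \operatorname{im}(q_3^*-f_3^*)$.

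The construction of $y$ should come from tracing the vanishing of $\bar\kappa^6$ in $\TMF$. A representative $g^6$ of $\bar\kappa^6$ in the descent spectral sequence for $\TMF$ is killed either by a differential or by a hidden extension in $\pi_*\TMF$; the same algebraic input, interpreted over the moduli stack $\mc{M}(\Gamma_0(3))$, yields a permanent cycle in the descent spectral sequence for $\TMF_0(3)$ which survives — the source of the offending differential either vanishes under $f_3^*$, or maps through $q_3^*-f_3^*$ in a way that makes the obstruction moot. Lifting this surviving representative gives the candidate $y \in \pi_{121}\TMF_0(3)$, and the geometric boundary theorem (or direct chase in the spectral sequence for $F(3)$) identifies $\delta(y)$ with the image of $\bar\kappa^6$.

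The final and hardest step is to verify that $y$ does not lie in $\operatorname{im}(q_3^*-f_3^*)$. Here I would exploit the analogy between $q_3^*$ and the classical Adams operation $\psi^3$ used in Bousfield's construction of $J_{(p)}$ (see (\ref{eq:J})): on the appropriate $c_4$-torsion-free summand of $\pi_*\TMF_{(2)}$ the operation $q_3^*$ admits an explicit eigenvalue description, as developed in \cite{Behrens} and \cite{BO}, so the image of $q_3^*-f_3^*$ can be read off degree-by-degree; the remaining $c_4$-torsion contributions can then be ruled out by direct inspection of the Mahowald--Rezk calculation of $\pi_*\TMF_0(3)_{(2)}$. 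The main obstacle is precisely this cokernel computation in dimension $121$ — one needs enough control over both $\pi_{121}\TMF_{(2)}$ and the action of $q_3^*$ there to preclude every potential preimage of $y$ — after which $\delta(y)\neq 0$ witnesses the non-triviality of $\bar\kappa^6$ in $\pi_{120}F(3)$, hence in $\pi_{120}^s$.
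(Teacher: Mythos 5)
Your overall frame agrees with the paper's endgame: since $\bar\kappa^6$ dies in $\pi_{120}\TMF$, its image in $\pi_{120}F(3)$ is detected by an element of $\pi_{121}\TMF_0(3)$ modulo the image of $q_3^*-f_3^*$, and the final non-vanishing check is indeed done against the Mahowald--Rezk description of $q^*-f^*$ (their Prop.~8.1). But there is a genuine gap at the crucial middle step: you never actually produce the class $y\in\pi_{121}\TMF_0(3)$, nor give any mechanism for proving $\delta(y)$ equals the image of $\bar\kappa^6$. The suggestion that one can ``trace the vanishing of $\bar\kappa^6$'' by reinterpreting the offending differential on $g^6$ in the descent spectral sequence over $\mc{M}(\Gamma_0(3))$ does not work: $2$-locally the descent spectral sequence for $\TMF_0(3)$ has essentially no higher cohomology to receive such a class, and a differential in the descent spectral sequence for $\TMF$ does not transport to a surviving permanent cycle for $\TMF_0(3)$; more fundamentally, the connecting map $\delta$ is not computable from the two descent spectral sequences alone.

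What the paper actually does to identify the detecting class is the real content of the proof: $\bar\kappa^5$ is lifted to the $18$-cell of $M(8,v_1^8)$ using the MASS computations of Sections~\ref{sec:tmfM38}--\ref{sec:MASS}, the product $\bar\kappa\cdot\bar\kappa^5[18]_{\TMF}$ is shown to be non-zero in $\TMF_{138}M(8,v_1^8)$ and to lift to $c_4^2\Delta^5\eta[1]\in\TMF_{138}M(8)$ via the Atiyah--Hirzebruch spectral sequence, and then the geometric boundary theorem applied to the $v_1^8$-cofiber sequence smashed with $F(3)\to\TMF\to\TMF_0(3)$ says the boundary is computed by ``apply $q-f$ to this lift and divide by $v_1^8$.'' The Mahowald--Rezk formula gives $(q-f)^*c_4^2\Delta^5\eta[1]=(a_3^{18}a_1^{14}\eta+\cdots)[1]$, and the division by $v_1^8$ produces $a_3^{18}a_1^{6}\eta+\cdots$ as the detecting class. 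Note that the division step is essential: the undivided class is by construction in the image of $q^*-f^*$, so without it your final step (showing $y\notin\im(q_3^*-f_3^*)$) would have no chance of succeeding. Your proposal contains no substitute for this $M(8,v_1^8)$/geometric-boundary mechanism, so as written it does not constitute a proof.
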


\begin{proof}

We note that the element $\bar\kappa \in \pi^s_{20}$ factors over the 18-cell of the generalized Moore spectrum $M(8,v_1^8)$, hence so does $\bar{\kappa}^5$ --- we let 
$$ \bar\kappa^5[18] \in \pi_{118}M(8,v_1^8) $$
denote such a lift.  We let
\begin{align*}
\bar\kappa^5[18]_{F(3)} & \in F(3)_{118}M(8,v_1^8) \\
 \bar\kappa^5[18]_{\TMF} & \in \TMF_{118}M(8,v_1^8)
 \end{align*}
denote the images of this lift in $F(3)$ and $\TMF$-homology, respectively.
We can use the computations of $\tmf_*M(8,v_1^8)$ in Section~\ref{sec:tmfM38} to identify the term in the MASS which detects $\bar\kappa^5[18]_{\TMF}$, and we find that the product
\begin{equation}\label{eq:kappabar^6}
 \bar{\kappa} \cdot (\bar{\kappa}^5[18]_{\TMF}) \in \TMF_{138}M(8,v_1^8)
 \end{equation}
is non-trivial (see Figure~\ref{fig:ExtH3896}).  By manually computing the Atiyah-Hirzebruch spectral sequence for $\TMF_*(M(8,v_1^8))$, one finds that the element (\ref{eq:kappabar^6}) lifts to the element
\begin{equation}\label{eq:kappabar^6lift}
 c_4^2\Delta^5\eta[1]_{\TMF} \in \TMF_{138}M(8).
 \end{equation}
Applying the geometric boundary theorem \cite[Lem.~A.4.1, Case 5]{goodEHP} to the fiber sequence of resolutions of $F(3)$
$$
\scriptsize
\xymatrix@C-1em{
F(3) \wedge \Sigma^{16} M(8) \ar[r]^{v_1^8} \ar[d] &
F(3) \wedge M(8) \ar[r] \ar[d] &
F(3) \wedge M(8,v_1^8) \ar[r]^{\partial} \ar[d] &
F(3) \wedge \Sigma^{17} M(8)  \ar[d]  \\
\TMF \wedge \Sigma^{16} M(8) \ar[r]^{v_1^8} \ar[d]_{q-f} &
\TMF \wedge M(8) \ar[r] \ar[d]_{q-f} &
\TMF \wedge M(8,v_1^8) \ar[r] \ar[d]_{q-f} &
\TMF \wedge \Sigma^{17} M(8)  \ar[d]_{q-f}  \\
\TMF_0(3) \wedge \Sigma^{16} M(8) \ar[r]^{v_1^8}  &
\TMF_0(3) \wedge M(8) \ar[r]  &
\TMF_0(3) \wedge M(8,v_1^8) \ar[r]  &
\TMF_0(3) \wedge \Sigma^{17} M(8) 
}
$$
we see that $\partial(\bar\kappa \cdot \bar\kappa^5[18]_{F(3)})$ is computed by the usual formula for a connecting homomorphism: 
apply $q-f$ to the lift (\ref{eq:kappabar^6lift}), divide the result by $v_1^8$, and then take the image in $F(3)$.  By \cite[Prop.~8.1]{MahowaldRezk}, applying $q-f$ to (\ref{eq:kappabar^6lift}) gives
$$ (q-f)^*c_4^2\Delta^5\eta[1] = (a_3^{18}a_1^{14}\eta + \cdots)[1]. $$
Dividing by $v_1^8$, we get 
\begin{equation}\label{eq:kappabar^6F3}
 (a_3^{18}a_1^{6}\eta + \cdots)[1] \in \TMF_0(3)_{122}(\Sigma^{16}M(8)).
 \end{equation}
We deduce that $\partial(\bar\kappa \cdot \bar\kappa^5[18]_{F(3)})$ is detected by (\ref{eq:kappabar^6F3}).  Projecting to the top cell of $M(8)$, we deduce that the image of $\bar\kappa^6$ in $\pi_*F(3)$ is detected by
$$  a_3^{18}a_1^{6}\eta + \cdots \in \TMF_0(3)_{121}.$$
This class can be seen to be non-zero in $\pi_*F(3)$, by again appealing to \cite[Prop.~8.1]{MahowaldRezk}.
\end{proof}

\begin{rmk}
Since the map $S \rightarrow F(3)$ factors through the spectrum $Q(3)$ of \cite{MahowaldRezk}, \cite{BO}, the element $\bar\kappa^6$ is also detected in $Q(3)$. 
\end{rmk}

\subsection*{Dimension 126}

\begin{figure}
\includegraphics[angle = 0, origin=c, height =.5\textheight]{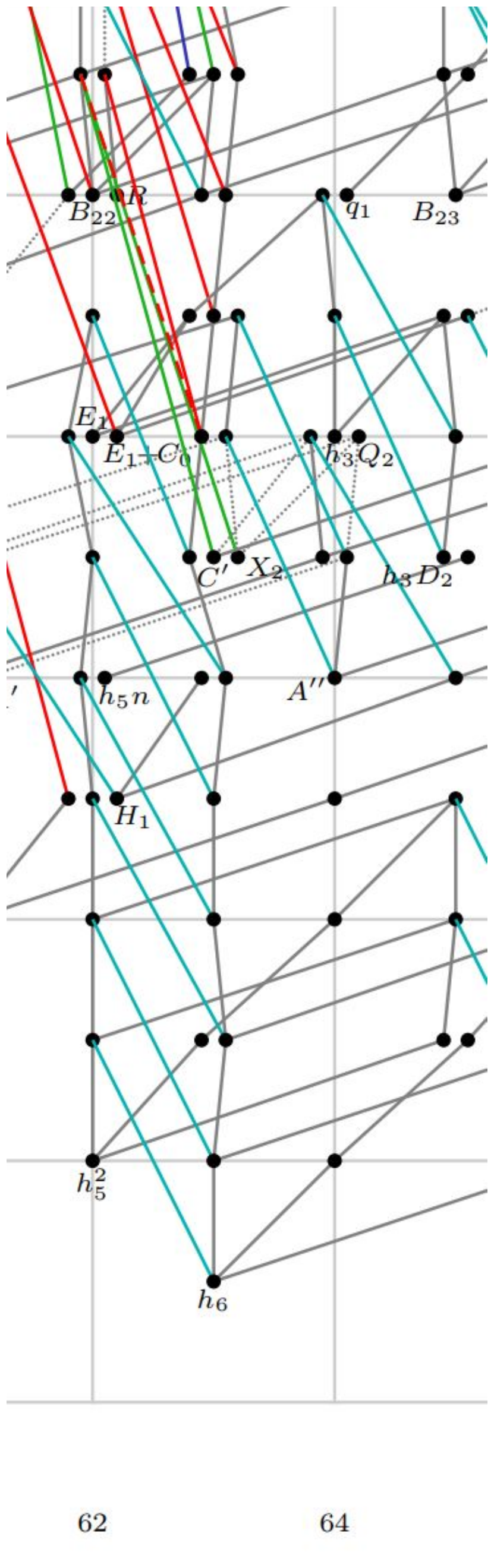}
\caption{The $2$-primary ASS for the sphere in the vicinity of $\theta_5$ (courtesy of Dan Isaksen).}\label{fig:ASS61}
\end{figure}  

Dimension 126 is handled by the following theorem, communicated to us by Isaksen and Xu.

\begin{thm}[Isaksen-Xu]\label{thm:126}
There exists a non-trivial element of $\coker J$ in dimension $126$ of Kervaire invariant $0$.
\end{thm}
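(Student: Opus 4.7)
The plan is to produce a non-zero element $\alpha \in \pi_{126}^s$. Two automatic reductions make this sufficient for the theorem. First, since $126 \equiv 6 \pmod 8$, the image of $J$ in this degree is trivial (cf.~Section~\ref{sec:J}), so any non-trivial stable class lies in $\coker J_{126}$. Second, $126 = 2^7-2$ is a Kervaire dimension, but Hill-Hopkins-Ravenel \cite{HHR} show that $h_6^2$ is not a permanent cycle; by Browder's theorem (Theorem~\ref{thm:Browder}), no class in $\pi_{126}^s$ can have Kervaire invariant $1$. Consequently any non-trivial class has Kervaire invariant $0$ automatically.

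I would then split on whether $\theta_5 \in \pi_{62}^s$ exists, i.e.~whether the Adams class $h_5^2$ is a permanent cycle. If $\theta_5$ exists, set $\alpha = \theta_5 \cdot \eta_6 \in \pi_{126}^s$; it is detected in the Adams spectral sequence by $h_1 h_5^2 h_6 \in E_2^{4,130}$. Non-triviality would be verified in two steps: first observe $h_1h_5^2 h_6 \ne 0$ on $E_2$ via Bruner's computer computations of $\Ext_{A_*}$, then rule out differentials from or to this class in stems $126$ and $127$ using Isaksen's chart \cite{Isaksenchart} and the sparsity of classes in the relevant filtrations. If $\theta_5$ does not exist, so $h_5^2$ supports an Adams differential $d_r(h_5^2) = z$ with $z$ a nontrivial class in stem $61$, I would use the alternative Toda bracket $\langle \theta_5, 2, \{X_2+C'\}\rangle$. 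Here the Adams Massey product $\langle h_5^2, h_0, [X_2+C']\rangle$ is forced to be defined at the $E_r$-page by the differential on $h_5^2$, and Moss's convergence theorem, applied with care about the indeterminacy, should produce a nontrivial permanent cycle in stem $126$ detecting the desired element.

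The hard part is the second case. When $\theta_5$ does not exist, neither the product $\theta_5\eta_6$ nor the literal Toda bracket $\langle \theta_5, 2, \{X_2+C'\}\rangle$ makes sense as an element of $\pi_{126}^s$; the argument must operate entirely inside the Adams spectral sequence, converting the hypothetical differential $d_r(h_5^2) = z$ into enough relational data to guarantee a surviving permanent cycle in stem $126$. The most delicate step will be controlling the indeterminacy of the resulting bracket and verifying that the surviving cycle is not itself killed by some higher Adams differential, which requires Adams spectral sequence information somewhat beyond the range currently in the literature; this is presumably the content of Isaksen and Xu's contribution.
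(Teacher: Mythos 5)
There is a genuine gap, in fact two. First, your reduction ``any non-trivial class in $\pi_{126}^s$ automatically has Kervaire invariant $0$'' rests on the claim that Hill--Hopkins--Ravenel show $h_6^2$ is not a permanent cycle; this is false --- \cite{HHR} only rules out $\theta_j$ for $j \ge 7$, and dimension $126$ is precisely the case left open (``perhaps 126'' in the introduction). So you cannot discharge the Kervaire-invariant condition this way. The correct observation is that the elements one actually constructs are detected in Adams filtration $4$ (by $h_1h_5^2h_6$) or filtration $8$ (by $h_6(X_2+C')$), hence are not detected by $h_j^2$, and Browder's theorem (Theorem~\ref{thm:Browder}) then gives Kervaire invariant $0$.

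Second, your case split is on the wrong dichotomy. The existence of $\theta_5$ (i.e.\ that $h_5^2$ is a permanent cycle) is classical and not in doubt; what is unknown is whether the product $\theta_5\eta_6$ is non-zero. Your first case asserts that non-triviality of $\theta_5\eta_6$ can be ``verified'' by checking $h_1h_5^2h_6 \ne 0$ on $E_2$ and ruling out differentials, but this is exactly what cannot be done with current knowledge --- the product may vanish (via a differential or a hidden relation), and no chart argument in the literature settles it. The force of the Isaksen--Xu argument is that it never decides this question: if $\theta_5\eta_6 \ne 0$ one is done, and if $\theta_5\eta_6 = 0$ one uses that very hypothesis to save the alternative class. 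Concretely, one needs the (unpublished Isaksen--Wang--Xu) input that $X_2+C' \in \Ext^{7,63+7}_{A_*}$ is a permanent cycle detecting an element of order $2$ in $\pi_{63}^s$, so that the bracket $\bra{\theta_5,2,\{X_2+C'\}}$ is defined and detected by $h_6(X_2+C')$; the only potential sources of differentials into this class are $x_{6,99}$ (excluded since $h_2x_{6,99}=0$ while $h_2(X_2+C')\ne 0$) and $h_1h_6^2$, which under the assumption $\theta_5\eta_6=0$ detects $\bra{2,\theta_5,\eta_6}$ and is therefore a permanent cycle, hence supports no differential. Your ``hard case'' (non-existence of $\theta_5$) never arises, and the Moss-type argument you sketch for it is not needed; but without the interplay above, neither of your two cases closes.
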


\begin{proof}
This argument uses the structure of the Adams spectral sequence in the vicinity of $\theta_5$.  A chart displaying this region is depicted in Figure~\ref{fig:ASS61}.  Suppose that $\theta_5 \eta_6 \in \pi_{126}^s$ is non-trivial.  Then we are done.  Suppose however that $\theta_5 \eta_6 = 0$.
Consider the class $X_2 + C'$ in $\Ext^{7,63+7}_{A_*}(\FF_2, \FF_2)$.  A detailed analysis of the $2$-primary stable stems of Isaksen-Wang-Xu reveals that this class is a permanent cycle, and detects an element of order $2$ in $\pi_{63}^s$.  (We caution the reader that this analysis is not published at this time.)  Thus the Toda bracket
$$ \bra{\theta_5, 2, \{X_2 + C'\}} \in \pi^s_{126} $$
exists, and is detected by $h_6(X_2 + C')$ in the ASS.  We would be done if we could show that this class is not the target of an Adams differential.  There are only two classes which could support such a differential:
\begin{align*}
x_{6,99} & \in \Ext^{6,127+6}_{A_*}(\FF_2, \FF_2), \\
h_6^2 h_1 & \in \Ext^{3, 127+3}_{A_*}(\FF_2, \FF_2).
\end{align*}
Since $h_2 x_{6,99} = 0$ and $h_2 (X_2+C') \ne 0$ in $\Ext$, there cannot be a differential
$$ d_2(x_{6,99}) = X_2 + C'. $$
If our assumption on $\theta_5 \eta_6$ holds, the class $h_6^2 h_1$ would detect the Toda bracket
$$ \bra{2, \theta_5, \eta_6} $$
and hence be a permanent cycle.
\end{proof}

\subsection*{Tentative results in the range 141-200}

To edify the reader's curiosity, we conclude this paper with tables with some tentative results on exotic spheres beyond dimension 140.  The classes in dimensions 158, 160, 168, and 192 where pointed out by Zhouli Xu. A key resource are the computations of Christian Nassau \cite{Nassau}.

\subsection*{Some non-trivial elements of $(\coker J_{4k})_{(p)}$}$\quad$

\begin{center}
\begin{tabular}{c|c|c|c}
$4k$ & $p = 2$ & $p = 3$ & $p=5$ \\
\hline
140	&		&		&	0	\\
144	&	$\pmb{((v_2^{16} \eta w)\eta/2)\eta}$	&		&	0	\\
148	&	$\pmb{v_2^{16} \epsilon \bar\kappa}$	&		&	0	\\
152	&		&		&	$\beta_1^4$	\\
156	&	$\pmb{v_2^{16} 2\bar{\kappa}^3}$	&		&	0	\\
160	&	$\eta_5\eta_7$	&		&	0	\\
164	&	$\pmb{v_2^{24} \kappa \nu^2}$	&	$\beta_{10}\beta_1$	&	0	\\
168	&	$\eta_7\{f_1\}$	&		&	0	\\
172	&		&		&	$\beta_3 \beta_1$	\\
176	&		&		&	0	\\
180	&		&	$\beta_{11}\beta_1$	&	0	\\
184	&		&	$\beta_{10}\beta_1^3$	&	0	\\
188	&		&		&	0	\\
192	&	$\eta_6^3$	&		&	0	\\
196	&		&	$\beta_{11}\beta_2$	&	0	\\
200	&	$\pmb{v_2^{32}\epsilon}$	&		&	$\beta_{2}\beta_1^3$	\\
\end{tabular}
\end{center}

\subsection*{Some non-trivial elements of $(\coker J_{8k-2})_{(p)}$ with Kervaire invariant $0$}
\begin{center}
\begin{tabular}{c|c|c|c}
$8k-2$ & $p = 2$ & $p = 3$ & $p=5$ \\
\hline
142	&	$\pmb{v_2^{16} \eta w}$	&		&	0	\\
150	&	$\pmb{(v_2^{16} \epsilon \bar\kappa)\eta^2}$ 	&	&	0	\\
158	&	$\eta_7 \theta_4$ 	&		&	0	\\
166	&		&		&	0	\\
174	&	$\pmb{\beta_{32/8}}$	&	$\beta_{10}\beta_1^2$	&	0	\\
182	&	$\pmb{\beta_{32/4}}$	&	$\beta_{12/2}$	&	$\beta_4$	\\
190	&		&	$\beta_{11}\beta_1^2$	&	$\beta_1^5$	\\
198	&	$\pmb{v_2^{32} \nu^2}$	&		&	0	\\
\end{tabular}
\end{center}

The only dimensions in this range where we do not know if exotic spheres exist are 140, 166, 176, and 188.

\bibliographystyle{amsalpha}

\nocite{*}
\bibliography{exotic3}

\end{document}